\newtheorem{example}{Example}
\DeclareMathOperator*{\argmax}{arg\,max}
\newcommand{\E}{\mathbf{E}}
\newcommand{\R}{\mathbb{R}}
\newcommand{\Z}{\mathbb{Z}}
\newcommand{\1}{\mathbbm{1}}
\newcommand{\mS}{\mathcal{S}}
\newcommand{\mW}{\mathcal{W}}
\newcommand{\mZ}{\mathcal{Z}}
\newcommand{\overbar}[1]{\mkern 1.5mu\overline{\mkern-0.5mu#1\mkern-0.5mu}\mkern 1.5mu}
\newtheorem{assumption}{Assumption}
\newtheorem{definition}{Definition}
\newtheorem{lemma}{Lemma}
\titleformat*{\section}{\Large\bfseries}
\titleformat*{\subsection}{\large      \bfseries}
\titleformat*{\subsubsection}{\normalsize\bfseries}
\titleformat*{\paragraph}{\normalsize\bfseries}
\titleformat*{\subparagraph}{\normalsize\bfseries}
\titlespacing{\paragraph}{0pt}{0.3\baselineskip}{0.5em}
\providecommand{\keywords}[1]
{
	\small	
	\textbf{\textit{Keywords:}} #1
}
\begin{document}
\doublespacing
	
\title{\Large \textbf{Structured Actor-Critic for Managing Public Health Points-of-Dispensing}}
\author[1]{Yijia Wang}
\author[1]{Daniel R. Jiang}
\affil[1]{Department of Industrial Engineering, University of Pittsburgh}
\affil[ ]{\textit {\{yiw94,drjiang\}@pitt.edu}}
\date{}

\maketitle

\defcitealias{pennsylvania2019}{NFRP, 2019}
\defcitealias{county2017}{DOD, 2017}
\defcitealias{allegheny2010}{Allegheny, 2010}
\defcitealias{dhhr2018}{West Virginia DHHR, 2018}
\defcitealias{michigan2017}{Michigan DHHS, 2017}
\defcitealias{novel2009emergence}{H1N1 Virus Investigation Team, 2009}

\defcitealias{cdc2009H1N1}{CDC, 2009a}
\defcitealias{cdc2009H1N1vaccine}{CDC, 2009b}
\defcitealias{cdc2012VFC}{CDC, 2012}
\defcitealias{cdc2014VFC}{CDC, 2014}
\defcitealias{cdc2018hepatitisA}{CDC, 2019}
\defcitealias{cdc2021how}{CDC, 2021a}
\defcitealias{cdc2021risk}{CDC, 2021b}
\defcitealias{cdc2021trends}{CDC, 2021c}
\defcitealias{cdc2021understanding}{CDC, 2021d}

\defcitealias{kentucky2018Hepatitis}{Kentucky DPH, 2018}
\defcitealias{ohio2019hepatitis}{Ohio Department of Health, 2019}
\defcitealias{florida2019hepatitis}{Florida Health, 2019}
\defcitealias{michigan2018Hepatitis}{Michigan DHHS, 2018}
\defcitealias{westvirginia2019hepatitis}{West Virginia DHH, 2019}
\defcitealias{tennessee2019hepatitis}{Tennessee Department of Health, 2019}
\defcitealias{cdc2018vaccineStorage}{U.S. HHS and CDC, 2018}

\newcommand{\red}[1]{\textcolor{red}{#1}}
\newcommand{\magenta}[1]{\textcolor{magenta}{#1}}

\begin{abstract}
    \looseness-1 Public health organizations face the problem of dispensing treatments (i.e., vaccines, antibiotics, and others) to groups of affected populations through ``points-of-dispensing'' (PODs) during emergency situations, typically in the presence of complexities like {demand stochasticity, heterogenous utilities (e.g., for vaccine distribution, certain segments of the population may need to be prioritized), and limited storage.}
	We formulate a {hierarchical Markov decision process (MDP) model with two levels of decisions (and decision-makers): the upper-level decisions come from an inventory planner that ``controls'' a lower-level dynamic problem, which optimizes dispensing decisions that take into consideration the heterogeneous utility functions of the random set of PODs.}
	We then derive structural properties of the MDP model and propose an approximate dynamic programming (ADP) algorithm that leverages structure in both the \emph{policy} and the \emph{value} space (state-dependent basestocks and concavity, respectively).
	The algorithm can be considered an \emph{actor-critic} method; to our knowledge, this paper is the first to jointly exploit policy and value structure within an actor-critic framework. We prove that the policy and value function approximations each converge to their optimal counterparts with probability one and provide a comprehensive numerical analysis showing improved empirical convergence rates when compared to other ADP techniques. 
	Finally, we show how an aggregation-based version of our algorithm can be applied in a realistic case study for the problem of dispensing naloxone (an overdose reversal drug) via first responders amidst the ongoing opioid crisis.
\end{abstract}

\keywords{public health, actor critic, approximate dynamic programming}

\section{Introduction} \label{sec:intro}

Public health organizations manage ``points-of-dispensing'' (PODs), operated by first responders or first receivers \citep{ablah2010large}, for distributing critical medical supplies during emergency situations (e.g., the ongoing opioid crisis, the COVID-19 pandemic, the 2009 H1N1 influenza pandemic, meningitis outbreaks). In this paper, we consider the hierarchical and sequential problem of optimizing inventory control and making dispensing decisions for multiple PODs. Our problem setting is specifically motivated by the ongoing opioid overdose \emph{harm reduction} efforts of public health organizations in cities across the U.S., where the opioid epidemic was declared a public health emergency in 2017. In particular, our modeling is motivated by the Naloxone for First Responders Program (NFRP), a statewide naloxone distribution {initiative} in Pennsylvania.

\looseness-1 {Our setup contains \emph{hierarchical decisions} in order to model the interplay between two decision-makers: the ``upper-level'' central inventory manager and a ``lower-level'' dispensing coordinator. The NFRP is an example of an organization that operates in this manner through the use of a centralized coordinating entity (CCE) that manages dispensing. Both decision-makers make sequential and non-myopic decisions (at different timescales) and are modeled using Markov decision processes (MDPs).} Another novel point of emphasis for our model is the notion that the effectiveness of the public health intervention can vary across different groups of the affected population \citep{lee2015vaccine, doi:10.1080/02791072.2018.1430409} and across different locations. 
Therefore, instead of modeling demand in a static and homogeneous manner, we consider the case where at each period, new demand information is revealed sequentially as a POD attribute and demand. The dispensing decisions are made according to the arrivals of PODs with a goal of maximizing total utility. {The essential trade-off considered by the dispensing coordinator is: \emph{should we satisfy a lower-priority demand now, or save the inventory for a possible higher-priority demand in the future?}}

The model we develop in this paper, however, is quite general and useful for related problems in public health as well where hierarchical decisions and demand heterogeneity may be an issue {(e.g., vaccine distribution, where certain segments of the population are more susceptible and where higher-level inventory and lower-level dispensing decisions might be made separately).} Other important characteristics of this problem include demand nonstationarity and the potential for limited storage capacity.  

Exact computation of the optimal policy for this model is difficult when the number of states is large, when the stochastic models are unknown, or when demand data is collected slowly over time. The main methodological contribution of the paper addresses these issues through a \emph{structured actor-critic} algorithm; our proposed method exploits structure in both the \emph{policy} and the \emph{value function} and can discover near-optimal policies in a fully data-driven way. Our algorithm uses several gradient updates on each iteration and thus is highly suitable for the situation where data arrives in an ongoing fashion and online updates are desired. In other words, a large batch of historical data is not required for our algorithm and the policy can be learned over time. We now give five examples of public health problems for which our model and algorithm are applicable.

\begin{example}[Opioid Overdose Epidemic] 
    \looseness-1 The rate of opioid overdose deaths tripled between 2000 and 2014 in the United States \citep{rudd2000increases}. More recently, in July 2017, it was estimated that there are 142 American deaths each day due to overdose \citep{christie2017president}. Naloxone is a drug that has the ability to reverse overdoses within seconds to minutes. To save lives amidst the current opioid epidemic, it is critical for naloxone to be widely distributed. Indeed, many harm reduction programs such as NFRP are undertaking the challenge by distributing naloxone free of charge to first responders. {The} NFRP program is run by Pennsylvania Commission on Crime and Delinquency (PCCD), who dispenses naloxone to eligible first responders through centralized, local hubs in each county or region First responders include emergency medical services, law enforcement, fire fighters, public transit drivers and so on.
    One challenge facing these organizations is that the utility of naloxone varies across different types of first responders. \citet{goodloe2014should, rando2015intranasal} emphasize the importance of law enforcement officers, who are ``often a community’s first contact with opioid overdose victims after 9-1-1 services have been summoned.'' 
    The utility of naloxone also varies across regions {due to the varying levels of opioid usage in different populations}. The West Virginia Department of Health and Human Resources (DHHR) purchased about 34,000 doses of naloxone; {in addition to} distributing to the state police, fire departments, and emergency medical services, DHHR additionally planned to distribute 1,000 doses of naloxone to each of the eight high priority counties, including Berkeley, Cabell, Harrison, Kanawha, Mercer, Monongalia, Ohio, and Raleigh \citepalias{dhhr2018}. Therefore, the prioritization of certain ``demand classes'' is an important consideration when naloxone is expensive or when quantities are limited; see, e.g., \citet{cohn2017baltimore} for a report on rationing practices in Baltimore.
\end{example}

\begin{example}[COVID-19]
    By the end of February 2021, COVID-19 has resulted in 28,409,727 cases and 511,903 deaths \citepalias{cdc2021trends}. Compared with 5-17 age group, the rate of death is 1100 times higher in 65-74 age group, 2800 times higher in 75-84 age group, and 7900 times higher in 85 and older age group \citepalias{cdc2021risk}. According to the COVID-19 vaccination recommendations by CDC \citepalias{cdc2021how}, in phase 1a, healthcare personnel and long-term care facility residents are offered vaccination first. After that, the 75 and older age group and the 65-74 age group are vaccinated in phases 1b and 1c.
\end{example}

\begin{example}[Influenza]
    The need for distinct demand classes was also observed for the case of vaccine distribution during the 2009 H1N1 influenza pandemic. The H1N1 influenza virus first emerged in Mexico and California in April 2009 \citep{neumann2009emergence} and the pandemic lasted until August 2010 \citep{world2010h1n1}. Children and young adults were disproportionately affected when compared to older adults \citep{kwan2009spring}: during April 15 and May 5, 2009, among the 642 confirmed infected patients in the U.S. (ranging from 3 months to 81 years old), 60\% were 18 years old or younger \citepalias{novel2009emergence}. The reported H1N1 cases from April 15 to July 24, 2009, show that the infected rate (number of cases per 100,000 population) of 0 to 4 age group is 17.6 times of the infected rate of 65 and older age group, and the rate of 5 to 24 age group is 20.5 times of the rate of 65 and older age group \citepalias{cdc2009H1N1}. 
    The Advisory Committee on Immunization Practice (ACIP) recommended a priority group (about 159 million Americans), in which there was a subset with highest priority (about 62 million Americans) \citep{rambhia2010mass}. Patients aged 65 and older were only considered for vaccination once the demand amongst younger groups were met \citepalias{cdc2009H1N1vaccine}.
\end{example}

\begin{example}[Hepatitis A] 
\looseness-1 Hepatitis A outbreaks began in 2016 and are currently (as of August 2019) ongoing in 29 states across the U.S \citepalias{cdc2018hepatitisA}. Recent data from August 16, 2019 shows  4837 cases (60 deaths) in Kentucky, 3244 cases (15 deaths) in Ohio, 2740 cases (31 deaths) in Florida, 918 cases (28 deaths) in Michigan, 2540 cases (23 deaths) in West Virginia, and 2219 cases (13 deaths) in Tennessee \citepalias{kentucky2018Hepatitis, ohio2019hepatitis, florida2019hepatitis, michigan2018Hepatitis, westvirginia2019hepatitis, tennessee2019hepatitis}. This outbreak largely affects the homeless, drug users, and their direct contacts \citepalias{cdc2018hepatitisA}. Center for Disease Control (CDC) guidelines suggest that vaccine inventory be conducted monthly to ensure adequate supplies and that the vaccine order decisions take into account projected demand and storage capacity \citepalias{cdc2018vaccineStorage}, two important aspects of our model. The CDC also recommends against overstocking, which presents the risk of wastage and outdated vaccines. 
\end{example}

\begin{example}[Vaccines for Children Program]
    \looseness-1 The measles epidemic in 1989 to 1991 revealed the issue of low vaccination rates among children \citepalias{cdc2014VFC}. Vaccines for Children (VFC) is a program started in 1994 that aims to reduce the economic barriers of vaccination for disadvantaged children  \citep{santoli1999vaccines, zimmerman2001effect, smith2005association}. It supplies free vaccines (including influenza, hepatitis A, hepatitis B, and measles) to registered providers, who in turn provide vaccinations to eligible children \citep{zimmerman2001effect, social2005VaccineforChildren}. Before healthcare providers are enrolled, VFC coordinators perform site visits to ensure proper storage practice \citepalias{cdc2012VFC}. A study in 2012 found, however, that out of 45 VFC providers, 76\% exposed VFC vaccines to inappropriate temperatures, and 16\% kept expired VFC vaccines \citep{levinson2012vaccines}. 
\end{example}

\paragraph{Our Results.} 
The main contributions of this paper are summarized below.
\begin{itemize}
	\item In this paper, we first develop and analyze a hierarchical, finite-horizon Markov decision process (MDP) model that abstracts the above problems into a single framework. The upper-level problem (the ``upper-level MDP'') is an inventory model that controls a lower-level dispensing problem (the ``lower-level MDP''). Here, we consider the setting where the utilities of PODs differ across regions due to the varying intervention effects on patients in different populations. The demand and POD-attribute distributions at each period depend on an information process, which can represent past demand realizations or other external information.
	
	\item \looseness-1 We then analyze the structural properties of the model. The MDP features basestock-like structure in a discrete state setting and discretely-concave value functions; both of these properties depend on the discrete-concavity observed in the lower-level problem. The motivation for a discrete state formulation comes from the naloxone distribution application, where demand quantities are relatively small; {this is not an ideal setting for use of a continuous state approximation}.
	
	\item Next, we propose a new actor-critic algorithm \citep{sutton1998reinforcement, konda2000actor} that exploits the structural properties of the MDP. More specifically, the algorithm tracks both policy and value function approximations (an identifying feature of an ``actor-critic'' method) and utilizes the structure to improve the empirical convergence rate. Moreover, the algorithm is suited for a setting where data arrive continually and the policy is updated over time. This algorithm (and its general idea) is potentially of broader interest, beyond the public health application.
	
	\item Finally, we present a case study for the problem of dispensing naloxone. We show how an aggregation-based version of the algorithm can be applied in a setting with continuous information states. 
	In addition to computing approximations to the optimal inventory management and dispensing strategies, we also conduct a sensitivity analysis to understand the impact of various model parameters. 
\end{itemize}

The paper is organized as follows. A literature review is provided in Section~\ref{sec:lit_review}. We introduce the hierarchical MDP model in Section~\ref{sec:model} and derive its structural properties in Section~\ref{sec:structural}. The proposed \emph{stuctured actor-critic} algorithm is given and discussed in Section~\ref{sec:alg}. In Section~\ref{sec:numerical}, we conduct numerical experiments.
We propose an aggregation-based version of the algorithm in Section~\ref{sec:aggregation} and finally present the naloxone case study in Section~\ref{sec:case_study}.

\section{Literature Review} \label{sec:lit_review}

In this section, we provide a brief review of related literature. 
The upper-level replenishment decisions in this paper are closely related to both lost-sales and perishable inventory models. In the lost-sales case, \citet{nahmias1979simple} constructs simple myopic approximations for three variations of the classical model with lead time. \citet{ha1997inventory} studies a single-item, make-to-stock production model with several demand classes and lost sales and constructs stock-rationing levels for the optimal policy. \citet{mohebbi2003supply} focuses on random supply interruptions in lost-sales inventory systems with positive lead times. \citet{zipkin2008old} finds that the standard base-stock policy performs poorly compared to some other heuristic policies. We also refer readers to \citet{bijvank2011lost} for a detailed review. Our public health application is also somewhat related to the problems studied in perishable inventory models \citep{janssen2016literature}, even though our motivating application does not require us to explicitly model age. 

Related to our hierarchical model is the case of multi-echelon systems, where, for example, an upper echelon (e.g. a central warehouse) replenishes the inventory of a lower echelon (e.g. a retailer) that serves demand \citep{clark1960optimal}. \citet{tan1974optimal} studies the optimal ordering and allocation policies for the upper echelon and \citet{graves1996multiechelon} constructs an allocation policy for the multi-echelon system. In the model of \citet{chen2000staggered}, each retailer is allowed to replenish once from the warehouse during an ordering cycle. \citet{van2007optimal} shows the optimality of base-stock policies and derives newsvendor-type equations for the optimal base-stock levels. \citet{zhou2013multi} studies a multi-product multi-echelon inventory system. \citet{grob2018inventory} aims to optimize the reorder points of both echelons given fixed order quantities. Our model expands upon this literature in that the optimization problems for the two echelons are modeled as two nested MDPs (or a ``hierarchical'' MDP). We show the concavity of the value function of both the upper-level and lower-level, which is then utilized to derive the structured actor-critic algorithm.

Our proposed actor-critic method falls under the class of approximate dynamic programming (ADP) or reinforcement learning (RL) algorithms \citep{bertsekas1996neuro-dynamic, sutton1998reinforcement, powell2007approximate}. Possibly the most well-known RL technique is Q-learning \citep{watkins1989learning}, a model-free approach that uses stochastic approximation (SA) to learn state-action value function (or ``Q-function''). In some cases, convexity of the value function is known a priori and can be exploited; see, e.g., \citet{pereira1991stochastic, powell2004learning, nascimento2009optimal, philpott2008convergence, shapiro2011analysis, nils2013optimizing}. The updates used in the value function approximation part of our algorithm most closely resembles \cite{powell2004learning} and \cite{nascimento2009optimal}. 

Related to the policy function approximation part of our algorithm, \citet{kunnumkal2008using} proposes a stochastic approximation method to compute basestock levels in continuous state inventory problems. Our method also utilizes two types of basestock structure, but it does so in a different way from \citet{kunnumkal2008using} due to our focus on discrete-valued inventory states. The primary feature of an actor-critic algorithm is that it approximates both the policy and value function \citep{werbos1974beyond, witten1977adaptive, werbos1992approximate, konda2000actor}. The ``actor'' is the policy function approximation (for selecting actions) and the ``critic'' represents the value function approximation used to ``criticize'' the actions selected by the actor. The novelty of our method is due to its use of the structure in \emph{both the value function and the policy}. Our experimental results show significant advantages of exploiting this policy-value structure.
Further, differing from most actor-critic methods, we do not use stochastic policy,\footnote{Given a state, a stochastic policy returns an action distribution, while a deterministic policy returns a specific action.} reducing the number of policy parameters to be learned.

In addition, state aggregation is a commonly used method to deal with large dynamic problems \citep{fox1973discretizing, bean1987aggregation, singh1995reinforcement}, including inventory management \citep{schweitzer1985iterative, chen1999applying, chen1999application, mousavi2004stochastic, zaher2014optimal}. Error bounds for these types of approximations can be found in \citet{bertsekas1975convergence}, \citet{ren2002state}, and \citet{van2006performance}. Our results in Section~\ref{sec:case_study} make use of partial aggregation of the state space.

Due to the discrete inventory states used in our model, we make use of the concept of $L^\natural$-convexity (concavity) as a tool in the analysis. This theory was first developed in \citet{fujishige2000notes} for discrete convex analysis and then extended to continuous variables by \citet{murota2000extension}. Closely related concepts are $l$-convexity and submodularity. It turns out that these ideas are useful in understanding the structures of optimal policies in the field of inventory management, as introduced by \citet{lu2005order} in an assemble-to-order multi-item system. \citet{zipkin2008structure} uses $L^\natural$-convexity in some variations of the basic multiperiod lost-sales model with lead time and \citet{huh2010optimal} extend the results to lost-sales serial inventory systems. \citet{pang2012note} use similar ideas to analyze inventory-pricing systems with lead time, and \citet{gong2013optimal} study finite capacity systems with both manufacturing and remanufacturing. See \citet{xin2017convexity} for a survey of applications utilizing the theory of $L^\natural$-convexity.

As for the utility in our model, the quality-adjusted life-year (QALY) is widely used to in healthcare to measure the value of medical interventions. The QALY was originally developed for cost-effectiveness analysis to decide scarce resource allocation across competing healthcare programs \citep{fanshel1970health, torrance1972utility, weinstein1977foundations}, and has been endorsed by the US Panel on Cost-Effectiveness in Health and Medicine as a standardized methodological approach in cost-effectiveness analyses \citep{weinstein1996cost}. The QALY has been used in naloxone distribution research to evaluate the cost-effectiveness of distributing naloxone to heroin users \citep{coffin2013cost}, distributing naloxone to adults at risk of heroin overdose in UK \citep{langham2018cost}, and one-time versus biannual distribution \citep{acharya2020cost}.

\section{Model Formulation} \label{sec:model} 
As discussed above, our MDP model is motivated by the hierarchical structure of public health organizations, such as Pennsylvania's NFRP, which distributes naloxone to a CCE that, in turn, coordinates the dispensing of naloxone to first responders in various counties. We assume that the {central inventory manager} makes \emph{replenish-up-to} and \emph{dispense-down-to} decisions to the central storage periodically. Then, naloxone is distributed to the {dispensing coordinator}, who makes dispensing decisions to sequentially and randomly arriving PODs. Given an initial allotment of inventory, the dispensing decisions to PODs are made with the goal of maximizing cumulative utility\footnote{The trade-off here considers, for example, the number of naloxone kits that should be provided to first responders in a neighborhood with high drug overdose death rate versus the first responders in a neighborhood with low drug overdose death rates.} of the satisfied naloxone demand within the dispensing period. 
The timing of events during each period is as follows: (1) the central inventory manager decides the replenish-up-to and dispense-down-to levels, (2) the dispensing coordinator receives naloxone, and (3) based on POD demands, POD attributes, and the level of available inventory, the dispensing coordinator dispenses naloxone in order to maximize utility. Figure~\ref{fig:hierarchical} gives an illustration of the timing of these events. In this section, we first discuss the lower-level dispensing problem and then illustrate the upper-level inventory control model.

\begin{figure}[ht]
	\centering
	\includegraphics[width=0.98\textwidth]{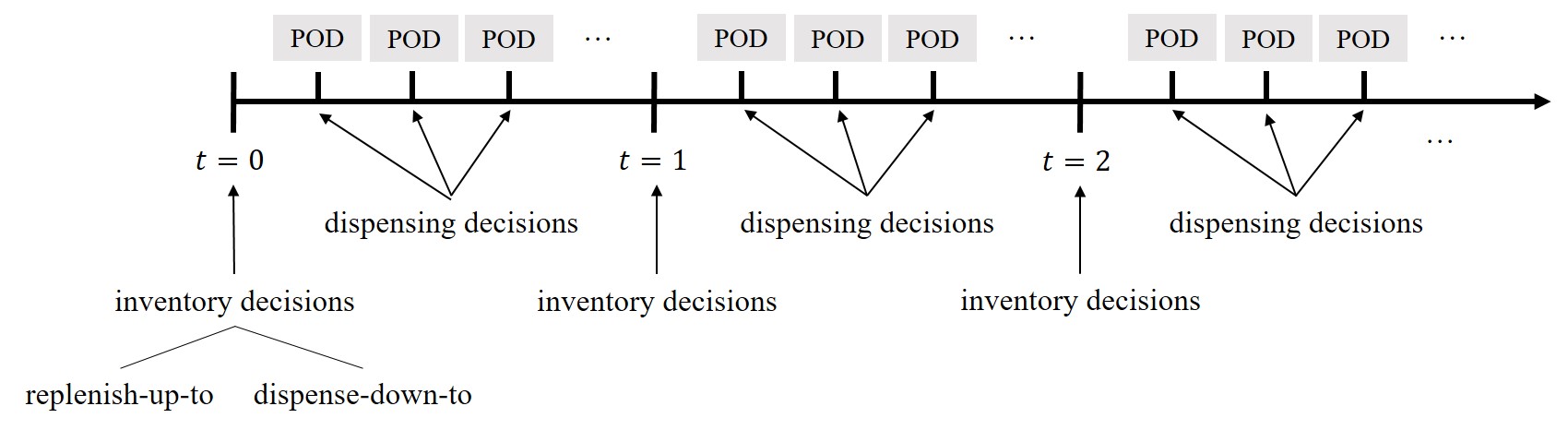}
	\caption{Sequence of Events}
	\label{fig:hierarchical}
\end{figure}

\subsection{The Dispensing MDP}

\looseness-1 {We first discuss the \emph{lower-level} MDP for making dispensing decisions within each period $t$. After the central inventory manager makes the replenish-up-to and dispense-down-to decisions, the dispensing coordinator receives a sequence of POD demands to satisfy, starting with an initial inventory allotment based on the dispense-down-to decision.} The dispensing model contains $n$ sub-periods. 
In sub-period $i$, the arriving POD is represented by an \emph{attribute} $\Xi_{t,i}$ which is interpreted as the arriving POD's attributes. When there is no arriving POD in sub-period i, $\Xi_{t,i}=0$. The distribution of $\Xi_{t,i}$ depends on an exogenous information process $\{W_t\}$ that transitions according to the upper-level timescale (and thus stays fixed at a particular realization $w$ for all sub-periods in the dispensing problem; a full description of this process will be given in Section~\ref{sec:replenishment}). 
Given realizations $w_t$ and $\xi_{t, i}$ of the exogenous information $W_t$ and attribute $\Xi_{t,i}$, we consider an increasing expected utility function $u_{w_t}(\cdot,\xi_{t,i})$, whose argument is the number of inventory units $y_{i}$ dispensed to the arriving POD in sub-period $i$.
For the remainder of this section, we omit the subscript $t$ in for convenience.

These utility functions should be interpreted as parameters specified by the public health organization. The motivation for modeling heterogeneous utilties for the case of naloxone dispensing is primarily due to varying severity of the epidemic across different regions and populations (first responders in regions with more opioid users should have higher priority).
To model this heterogeneity in demand, our model allows region and other related information to be encoded within the attribute $\xi_{i}$, which then determines the utility. 

The dispensing coordinator aims to maximize the total utility subject to the initial inventory allotment $x_0$. In sub-period $i$, given exogenous information $w$, available inventory level $x_i$ and attribute state $\xi_i$ about the arriving POD, a dispensing decision $y_i$ is made.
Let $\{\mu_{w,0},\mu_{w,1},\ldots,\mu_{w,n-1}\}$ be lower-level dispensing policy for exogenous information $w$ and suppose $\mathcal{M}_w$ is the set of all feasible policies that satisfy $\mu_{w,i}(x_i,\xi_i) \leq x_i$.
The objective on the lower-level is given by
\begin{equation*}
 U_{w,0}(x_0, \xi_0) = \max_{\mu_w\in \mathcal{M}_w} \mathbf{E} \left[ \sum_{i=0}^{n-1} u_w \bigl(\mu_{w,i}(X_i,\Xi_i), \Xi_i\bigr) \, \Bigl | \, X_0=x_0, \Xi_0=\xi_0, W=w\right],
\end{equation*}
where the transition follows $X_{i+1} = x_i - \mu_{w,i}(x_i,\xi_i)$. {The optimum is attained by an optimal policy $\mu^*_w$.}
We now write the Bellman optimality equation for the objective:
\begin{equation} \label{eq.disopt}
    \textstyle U_{w,i}(x_i,\xi_i) = \textnormal{max}_{y_i \leq x_i} \; u_w\bigl(y_i,\xi_i\bigr) + \E_w\bigl[ U_{w,i+1}(X_{i+1},\Xi_{i+1}) \bigr]
\end{equation}
for $i=0,1,\ldots,n-1$, and $U_{w,n}(x_{n},\xi_{n}) = 0$, where $\E_w$ is being used as shorthand for the expected value conditioned on $\{W_t=w\}$.

\subsection{The Inventory Control MDP} \label{sec:replenishment}

The sequential inventory control aspect of the model contains $T$ planning periods. In each period, there are two decisions to be made: the replenish-up-to level and the dispense-down-to level. 
In the first period $t=0$, the initial resource level $R_0=0$. In the last period $t=T$, no decision is made and the remaining inventory $R_T$ is either worthless or charged a disposal cost (controlled by a parameter $b\ge 0$). Let $\{W_t\}$ be the aforementioned exogenous information process, which may contain information regarding past POD demands, current disease trends, or other dynamic information related to the public health situation. 
As discussed above, the information state $W_t$ influences the distribution of the attributes $\Xi_{t,i}$ of the arriving PODs for sub-periods $i=1,2,\ldots,n$ of the lower-level problem in period $t$. 
We assume that $W_t$ takes values in a finite set $\mathcal W$ and that it is a Markov process.

Let $R_\textnormal{max}$ be the capacity of the central storage facility. At the end of each period $t$, the central inventory manager makes a replenish-up-to decision $z^\textnormal{rep}_t$ based on the available resource level $R_t \in \{0,1,\ldots,R_\textnormal{max}\}$ and the exogenous information $W_t \in \mathcal W$. After this, the central inventory manager makes a dispense-down-to decision $z^\textnormal{dis}_t$ based on the replenish-up-to decision $z^\textnormal{rep}_t$ and $W_t$. 

We will often refer to particular values of the resource level $R_t$ and exogenous information $W_t$ using the notations $r$ and $w$, respectively. Let $\bar{\mZ}(r) = \{r,r+1,\ldots,R_\textnormal{max}\}$ be the set of feasible replenish-up-to decisions if the current inventory level is $r$, so that $z^\textnormal{rep}_t \in \bar{\mZ}(R_t)$ in period $t$. 
This means the central inventory manager orders $z^\textnormal{rep}_t - R_t$ units of inventory with a per-unit ordering cost $c_w\geq c_\textnormal{min}$ (note that we allow this ordering cost to depend on the exogenous information $w$), {where $c_\textnormal{min}$ is a positive scalar}.

Let $\underline{\mZ}(z^\textnormal{rep}) = \{0,1,\ldots,z^\textnormal{rep}\}$ be the set of feasible dispense-down-to decisions if the current resource level is $z^\textnormal{rep}$, so that $z^\textnormal{dis}_t \in \underline{\mZ}(z^\textnormal{rep}_t)$ in period $t$. This means the central inventory manager delivers $z^\textnormal{rep}_t - z^\textnormal{dis}_t$ units of inventory to the dispensing coordinator, and $z^\textnormal{rep}_t - z^\textnormal{dis}_t$ serves as the initial inventory allotment in the lower-level dispensing MDP problem. 
The transition to the next inventory state $R_{t+1}$ is given by:
\begin{equation} \label{eq.trans}
	\textstyle R_{t+1} = z^\textnormal{dis}_t.
\end{equation}
Each unit of leftover inventory after applying the transition \eqref{eq.trans} is charged a holding cost $h<c_\textnormal{min}$. 

A policy $\{\pi_0, \pi_1, \ldots, \pi_{T-1} \}$ is a sequence of mappings $\pi_t = (\pi_t^\textnormal{rep}, \pi_t^\textnormal{dis})$
from states $(R_t,W_t)$ to replenish-up-to levels and dispense-down-to levels. Let $\Pi$ be the set of all feasible policies that satisfy $\pi_t^\textnormal{rep}(R_t,W_t) \geq R_t$ and $\pi_t^\textnormal{dis}(R_t,W_t) \leq \pi_t^\textnormal{rep}(R_t,W_t)$. Our objective is given by:
\begin{equation*}
    \max_{\pi\in\Pi}\; \mathbf{E} \Biggl[ \sum_{t=0}^{T-1} \bigl( -h R_t - c_{W_t} \bigl(\pi_t^\textnormal{rep}(R_t,W_t)-R_t\bigr) + U_{W_t,0}\bigl(\pi_t^\textnormal{rep}(R_t,W_t) - \pi_t^\textnormal{dis}(R_t, W_t),\, \Xi_{t,0}\bigr) \Biggr] - b\,R_T,
\end{equation*}
where $R_t$ transitions according to \eqref{eq.trans} for $(z^\textnormal{rep}_t,z^\textnormal{dis}_t) = \pi_t(R_t,W_t)$, and the gap between the two decisions of the upper-level problem, $\pi_t^\textnormal{rep}(R_t,W_t) - \pi_t^\textnormal{dis}(R_t, W_t)$, serves as the initial resource level of the lower-level problem. 
We now write a preliminary set of Bellman optimality equations for the objective above. Let $V_T(r,w) = -b\,r$ be the terminal value (note: $b$ is zero if there is no disposal cost). For $t<T$, we have
\begin{equation} \label{eq.prelimbellman}
    \begin{aligned}
	V_t(r,w) = \max_{z^\textnormal{rep}\in\bar{\mZ}(r), z^\textnormal{dis}\in\underline{\mZ}(z^\textnormal{rep})}\, &(c_w-h)\,r - c_w z^\textnormal{rep} + \E_w \bigl[ U_{w,0}\bigl(z^\textnormal{rep}-z^\textnormal{dis}, \Xi_{t,0}\bigr) + V_{t+1}\bigl(z^\textnormal{dis}, W_{t+1}\bigr) \bigr].
	\end{aligned}
\end{equation}

So far, we have considered $z^\textnormal{rep}$ and $z^\textnormal{dis}$ as being made simultaneously in each period, but we can equivalently view the dispense-down-to decision $z^\textnormal{dis}$ to be taken after the replenish-up-to decision $z^\textnormal{rep}$ (this reflects the reality and also is useful for our analysis and algorithm). The set $\underline{\mZ}(z^\textnormal{rep})$ of feasible dispense-down-to decisions is dependent on the replenish-up-to decision $z^\textnormal{rep}$. Therefore, the value function in each period can be broken into two steps:
\begin{equation} \label{eq.bellman_rep}
    \begin{aligned}
	\textstyle V_t^\textnormal{rep}(r,w) = (c_w-h)\,r + \max_{z^\textnormal{rep}\in\bar{\mZ}(r)}\bigl\{ - c_w z^\textnormal{rep} + V_t^\textnormal{dis}(z^\textnormal{rep},w) \bigr\},
	\end{aligned}
\end{equation}
\begin{equation} \label{eq.bellman_dis}
    \begin{aligned}
	\textstyle V_t^\textnormal{dis}(z^\textnormal{rep},w) = \max_{z^\textnormal{dis}\in\underline{\mZ}(z^\textnormal{rep})} \E_w \bigl[U_{w,0} \bigl( z^\textnormal{rep}-z^\textnormal{dis},\Xi_{t,0}\bigr) + V_{t+1}^\textnormal{rep} \bigl(z^\textnormal{dis}, W_{t+1}\bigr) \bigr],
	\end{aligned}
\end{equation}
with $V_T^\textnormal{rep}(r,w) = -b\,r$.
Similarly, there are two postdecision value functions in each period corresponding to the replenish-up-to decision and the dispense-down-to decision respectively:
\begin{equation} \label{eq.postdec_rep}
    \textstyle \tilde{V}_t^\textnormal{rep}(z^\textnormal{rep},w) = - c_w z^\textnormal{rep} + V_t^\textnormal{dis}(z^\textnormal{rep},w),
\end{equation}
\begin{equation} \label{eq.postdec_dis}
    \textstyle \tilde{V}_t^\textnormal{dis}(z^\textnormal{dis},w) = \E_w\bigl[ V_{t+1}^\textnormal{rep}\bigl(z^\textnormal{dis},W_{t+1}\bigr) \bigr].
\end{equation}
The optimal policy can be written as follows
\begin{equation} \label{eq.policy_rep}
	\textstyle \pi_t^{\textnormal{rep},*}(r,w) \in \argmax_{z^\textnormal{rep}\in\bar{\mZ}(r)} \; \tilde{V}_t^\textnormal{rep}(z^\textnormal{rep},w).
\end{equation}
\begin{equation} \label{eq.policy_dis}
	\textstyle \pi_t^{\textnormal{dis},*}(z^{\textnormal{rep}},w) \in \argmax_{z^\textnormal{dis}\in\underline{\mZ}(z^{\textnormal{rep}})} \; \E_w \bigl[ U_{w,0}\bigl(z^\textnormal{rep}-z^\textnormal{dis},\Xi_{t,0}\bigr) \bigr] + \tilde{V}_t^\textnormal{dis}(z^\textnormal{dis},w),
\end{equation}
where, with a slight abuse/reuse of notation, $\pi_t^{\textnormal{dis},*}(z^{\textnormal{rep}},w)$ is the optimal dispense-down-to policy when the replenish-up-to level is $z^{\textnormal{rep}}$.
Combining \eqref{eq.bellman_rep}-\eqref{eq.policy_dis}, we obtain equivalent formulations of the optimality equation written using $\tilde{V}_t^\textnormal{rep}(z^\textnormal{rep},w)$, $\tilde{V}_t^\textnormal{dis}(z^\textnormal{dis},w)$, $\pi_t^{\textnormal{rep},*}(r,w)$, and $\pi_t^{\textnormal{dis},*}(z^\textnormal{rep},w)$:
\begin{equation} \label{eq.postdec_rep_opt}
    \textstyle \tilde{V}_t^\textnormal{rep}(z^\textnormal{rep},w) = - c_w z^\textnormal{rep} + \E_w \bigl[U_{w,0} \bigl( z^\textnormal{rep} - \pi_t^{\textnormal{dis},*}(z^\textnormal{rep}, w), \, \Xi_{t,0} \bigr) \bigr] + \tilde{V}_t^\textnormal{dis} \bigl(\pi_t^{\textnormal{dis},*}(z^\textnormal{rep}, w), w \bigr),
\end{equation}
\begin{equation} \label{eq.postdec_dis_opt}
    \textstyle \tilde{V}_t^\textnormal{dis}(z^\textnormal{dis},w) = \E_w\bigl[ (c_{W_{t+1}} - h) \,z^\textnormal{dis} + \tilde{V}_{t+1}^\textnormal{rep} \bigl(\pi_{t+1}^{\textnormal{rep},*}(z^\textnormal{dis}, W_{t+1}), W_{t+1}\bigr) \bigr],
\end{equation}
with $\textstyle \tilde{V}_{T-1}^\textnormal{dis}(z^\textnormal{dis},w) = -b\, z^\textnormal{dis}$.

Our proposed algorithm will make use of the convenient formulations of $\tilde{V}_t^\textnormal{rep}(z^\textnormal{rep},w)$ and $\tilde{V}_t^\textnormal{dis}(z^\textnormal{dis},w)$ as expectations in \eqref{eq.postdec_rep_opt} and \eqref{eq.postdec_dis_opt}.
These formulations are useful for ADP for two reasons: (1) the maximization is within the expectation, so a data- or sample-driven method is easier to incorporate and (2) knowledge about the policies $\pi^{\textnormal{rep},*}_t$ and $\pi^{\textnormal{dis},*}_t$ can be used within a value function approximation procedure. Indeed, our actor-critic algorithm will make use of the interplay between the greedy policy functions \eqref{eq.policy_rep} and \eqref{eq.policy_dis} and the optimal value functions \eqref{eq.postdec_rep} and \eqref{eq.postdec_dis}.

\section{Structural Properties} \label{sec:structural}

In this section, we analyze the structure properties of the postdecision value functions $\tilde{V}_t^\textnormal{rep}$ and $\tilde{V}_t^\textnormal{dis}$ and the optimal policies $\pi_t^{\textnormal{rep},*}$ and $\pi_t^{\textnormal{dis},*}$. We remind the reader that our model uses discrete inventory states. As opposed to the standard continuous inventory state approximation, this modeling decision was made in order to accomodate the public health setting, where resources are potentially scarce. Our structural analysis makes use the properties of $L^\natural$-concave functions, an approach used often in inventory models \citep{xin2017convexity}.

\begin{definition}[$L^\natural$-concave function]
	\label{def: discrete concave function and L-natural concave function}
	A function $g: \Z^d \rightarrow \R \cup \{+\infty\}$ with $\text{dom}\,g \neq \emptyset$ is $L^\natural$-concave if and only if it satisfies discrete midpoint concavity: 
	\begin{equation} \label{eq.midpoint_concavity}
	    g(p) + g(q) \leq g \left( \left\lceil \dfrac{p+q}{2} \right\rceil \right) + g \left( \left\lfloor \dfrac{p+q}{2} \right\rfloor \right)
	\end{equation}
	for all $p, q \in \Z^d$, where $\lceil \cdot \rceil$ and $\lfloor \cdot \rfloor$ are the \emph{ceiling} and \emph{floor} functions, respectively.
\end{definition}

For the one-dimensional case, $ g: \Z \rightarrow \R $, the condition \eqref{eq.midpoint_concavity} can be reduced to the simpler statement: $g(p) - g(p-1) \ge g(p+1) - g(p)$ for all $p \in \Z$, and $L^\natural$-concavity is equivalent to discrete concavity \citep{murota1998discrete}. Throughout the rest of the paper, we will use \emph{discretely concave} to refer to one-dimensional functions that satisfy this condition.

\begin{assumption} \label{assumption: properties of u}
	For any $w$ and $\xi$, the expected utility function $u_w(x,\xi)$ is discretely concave in $x$. 
\end{assumption}

\begin{restatable}{proposition}{propositionUIsConcave} \label{prop: U is concave in r}
	Suppose Assumption~\ref{assumption: properties of u} is satisfied. Then, for each information state $w$, POD attribute $\xi$, and sub-period $i$, the lower-level value function $U_{w,i}(x,\xi)$ is discretely concave in the inventory state $x$.
\end{restatable}

\begin{restatable}{proposition}{propositionVtildeIsConcaveSeparate} \label{prop: V_tilde_separate is concave in z}
	Suppose Assumption~\ref{assumption: properties of u} is satisfied. Then, the following properties hold:
	\begin{enumerate}
		\item For each $t$ and information state $w$, the postdecision value function $\tilde{V}_t^\textnormal{rep}(z^\textnormal{rep},w)$ is discretely concave in $z^\textnormal{rep}$ and $\tilde{V}_t^\textnormal{dis}(z^\textnormal{dis},w)$ is discretely concave in $z^\textnormal{dis}$.
		\item For each $t$ and state $(r,w)$, the optimal policy $\pi_t^{\textnormal{rep},*}(r,w)$ can be written as a series of state-dependent, discrete basestock policies, with thresholds $l^\textnormal{rep}_t(w) \in \{0,1,\ldots,R_\textnormal{max}\}$:
		\begin{equation*} \label{eq.z_t-basing-on-s_t}
		    \pi_t^{\textnormal{rep},*}(r,w) = \max\{r,l^\textnormal{rep}_t(w)\}.
		\end{equation*}
		It is optimal to replenish the inventory level as close as possible to $l^\textnormal{rep}_t(w)$.
		\item For each $t$ and state $(z^\textnormal{rep},w)$, the optimal policy $\pi_t^{\textnormal{dis},*}(z^\textnormal{rep},w)$ can be written as a series of state-dependent, discrete basestock policies, with thresholds $l^\textnormal{dis}_t(z^\textnormal{rep},w) \in \{0,1,\ldots,R_\textnormal{max}\}$:
		\begin{equation*} \label{eq.z_t-basing-on-z_t}
		    \pi_t^{\textnormal{dis},*}(z^\textnormal{rep},w) = \min\{z^\textnormal{rep}, l^\textnormal{dis}_t(z^\textnormal{rep},w)\}.
		\end{equation*}
	\end{enumerate}
\end{restatable}
\begin{proof}
	See Appendix~\ref{appendix: proof property outer_separate} for the proof of Part 1. Parts 2 and 3 then follow directly from \eqref{eq.policy_rep} and \eqref{eq.policy_dis} respectively.
\end{proof}
We remark that the state-dependency of the replenish-up-to thresholds $l^\textnormal{rep}_t(w)$ in Proposition~\ref{prop: V_tilde_separate is concave in z} refers only to the exogenous information state $W_t$, while the dispense-down-to thresholds $l^\textnormal{dis}_t(z^\textnormal{rep},w)$ are dependent on both the inventory and information states $(z^\textnormal{rep},w)$. In the former case, if $r<l^\textnormal{rep}_t(w)$, it is optimal to replenish up to $l^\textnormal{rep}_t(w)$, while if $r_t \geq l^\textnormal{rep}_t(w)$, it is optimal not to replenish. The quantity ordered is given by $\pi_t^{*,\textnormal{rep}}(r,w)-r$. In the latter case,
if $z^\textnormal{rep}> l^\textnormal{dis}_t(z^\textnormal{rep},w)$, it is optimal to dispense down to $l^\textnormal{dis}_t(z^\textnormal{rep},w)$, while if $z^\textnormal{rep} \leq l^\textnormal{dis}_t(z^\textnormal{rep},w)$, it is optimal to dispense down to $z^\textnormal{rep}$.

For algorithmic reasons, we define $v^\textnormal{rep}_t(z^\textnormal{rep},w) = \Delta \tilde{V}^\textnormal{rep}_t(z^\textnormal{rep},w)$ and $v^\textnormal{dis}_t(z^\textnormal{dis},w) = \Delta \tilde{V}^\textnormal{dis}_t(z^\textnormal{dis},w)$ to be the ``slopes'' of postdecision state values $\tilde{V}^\textnormal{rep}_t(z^\textnormal{rep},w)$ and $\tilde{V}^\textnormal{dis}_t(z^\textnormal{dis},w)$ respectively, where $\Delta \tilde{V}^\textnormal{rep}_t(z^\textnormal{rep},w) = \tilde{V}^\textnormal{rep}_t(z^\textnormal{rep},w) - \tilde{V}^\textnormal{rep}_t(z^\textnormal{rep}-1,w)$, $\Delta \tilde{V}^\textnormal{dis}_t(z^\textnormal{dis},w) = \tilde{V}^\textnormal{dis}_t(z^\textnormal{dis},w) - \tilde{V}^\textnormal{dis}_t(z^\textnormal{dis}-1,w)$, and $\tilde{V}^\textnormal{rep}_t(-1,w) = \tilde{V}^\textnormal{dis}_t(-1,w) \equiv 0$. It holds that $\tilde{V}^\textnormal{rep}_t(z^\textnormal{rep},w) = \sum_{z'=0}^{z^\textnormal{rep}} v^\textnormal{rep}_t(z',w)$, where $v^\textnormal{rep}_t(0,w) \equiv \tilde{V}^\textnormal{rep}_t(0,w)$. Proposition~\ref{prop: V_tilde_separate is concave in z} implies that $v^\textnormal{rep}_t(z,w) \geq v^\textnormal{rep}_t(z',w)$ for all $0 < z \leq z'$. The same is true for $\tilde{V}^\textnormal{dis}_t(z^\textnormal{dis},w)$ and $v^\textnormal{dis}_t(z^\textnormal{dis},w)$.

\section{The Structured Actor-Critic Method} \label{sec:alg}

In this section, we focus on the upper-level inventory control and dispensing problem and introduce the structured actor-critic (S-AC) algorithm. 
The goal of the algorithm is to approximate the postdecision value functions $\tilde{V}^\textnormal{rep}$ and $\tilde{V}^\textnormal{dis}$ and the optimal (basestock) policies $\pi^{\textnormal{rep},*}$ and $\pi^{\textnormal{dis},*}$ by exploiting structure for both.
For the lower-level dispensing problem, we use backward induction to solve the dynamic programming exactly, and apply the optimal lower-level dispensing policy $\mu^*_w$ to each of the arrived PODs.

\subsection{Overview of the Main Idea}

Our algorithm is based on the recursive relationship of \eqref{eq.postdec_rep} and \eqref{eq.postdec_dis} and the properties of the problem as described in Proposition~\ref{prop: V_tilde_separate is concave in z}. The basic structure is a time-dependent version of the actor-critic method, which makes use of the interaction between the value approximations and the policy approximations in each iteration. The ``actor'' refers to the policy approxmations $\{\bar{\pi}^{\textnormal{rep},k}\}$ and $\{\bar{\pi}^{\textnormal{dis},k}\}$, and the ``critic''  refers to the value approximations $\{\bar{V}^{\textnormal{rep},k}\}$ and $\{\bar{V}^{\textnormal{dis},k}\}$. If the optimal policy is known, then the postdecision values can be calculated by \eqref{eq.postdec_rep_opt} and \eqref{eq.postdec_dis_opt}; similarly, if the value function is known, the optimal policies can be calculated by \eqref{eq.policy_rep} and \eqref{eq.policy_dis}. The proposed algorithm applies these two relationships in an alternating fashion. 

\begin{figure}[ht]
	\centering
	\includegraphics[width=0.85\textwidth]{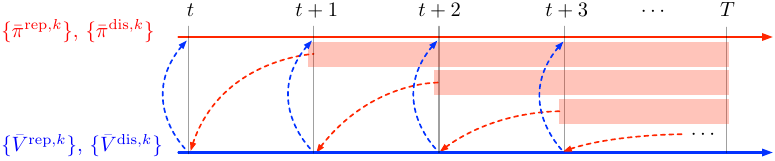}	
	\caption{An illustration of how value and policy functions interact under the S-AC algorithm.}
	\label{fig:sac}
\end{figure}

We represent the replenish-up-to policy by approximate basestock thresholds $\{\overbar{l}^{\textnormal{rep},k}\}$, where $\overbar{l}^{\textnormal{rep},k}_t(w)$ is the approximation to $l^\textnormal{rep}_t(w)$ at iteration $k$. Note that compared to a standard actor-critic implementation which tracks a stochastic policy for each state \citep{sutton1998reinforcement}, this is a significant reduction in the number of parameters needed to be learned. 
We represent the dispense-down-to policy as approximations $\{\bar{\pi}^{\textnormal{dis},k}_t(z^{\textnormal{rep}},w)\}$. 
As for the values, we represent them as approximations $\{\bar{v}^{\textnormal{rep},k}\}$ and $\{\bar{v}^{\textnormal{dis},k}\}$, where $\bar{v}^{\textnormal{rep},k}_t(z^\textnormal{rep},w)$ and $\bar{v}^{\textnormal{dis},k}_t(z^\textnormal{dis},w)$ approximate the discrete slopes $v^\textnormal{rep}_t(z^\textnormal{rep},w) = \Delta \tilde{V}^{\textnormal{rep}}_t(z^\textnormal{rep},w)$ and $v^\textnormal{dis}_t(z^\textnormal{dis},w) = \Delta \tilde{V}^{\textnormal{dis}}_t(z^\textnormal{dis},w)$, respectively. According to Proposition~\ref{prop: V_tilde_separate is concave in z}, if the approximations of the slopes are nonincreasing in $z^\textnormal{rep}$ and $z^\textnormal{dis}$, respectively, then the approximate value function is discretely concave in each of the decisions.

These approximations are iteratively updated via a stochastic approximation method \citep{robbins1951stochastic,kushner2003stochastic}. At each iteration, the algorithm has three steps. In the first step, we observe an exogenous information sequence and the attribute-request vectors for the whole planning horizon. In the second step, we observe the value of the current state under the current policy approximations, subject to the observed {attribute-demand} vectors. This value is used to update the value approximations. Finally, in the third step, we use the \emph{implied basestock threshold} from the latest value function to update our approximate policy. The interactions between the policy and value approximations are shown in Figure~\ref{fig:sac}.

\looseness-1 Throughout the rest of the paper, we use \emph{bar} notation (e.g., $\bar{v}^{\textnormal{rep},k}$ or $\overbar{l}^{\textnormal{rep},k}$) to denote approximations tracked by the algorithm at iteration $k$. On the other hand, we use \emph{hat} notation (e.g., $\hat{V}^{\textnormal{rep},k}_t$ or $\hat{v}^{\textnormal{rep},k}_t$) to denote observed values at iteration $k$ (these are one-time observations used to update the tracked approximations).

\subsection{Algorithm Description}
First, let us give some notation. The observed trajectory of the exogenous information process $\{W_t\}$ at iteration $k$ is denoted $\{w_0^k, w_1^k, \ldots, w_{T-1}^k\}$ and the initial postdecision replenished resource level at period 0 is $z_0^{\textnormal{rep,k}}$. The corresponding attribute $\{\xi_{t,1}^k\}$ observed at iteration $k$ is assumed to follow the conditional distributions given $w_t^k$. Similarly, let $\mathbf{Z}^k_t(w)$ be an independent realization of the process $(W_\tau, \xi_{\tau,1})_{\tau=t}^{T-1}$ conditioned on $W_t=w$. This sequence of realizations is used to obtain an observation of the value of policy approximation starting at $t$ and $W_t=w$ and we denote its elements by
\begin{equation*}
    \mathbf{Z}^k_t(w) = \bigl\{ (\check{w}_\tau^k, \check{\xi}_{\tau,1}^k) : \tau = t,\ldots,T-1 \bigr\},
\end{equation*}
where $\check{w}_t^k = w$. Define $\tilde{\pi}^{\textnormal{rep},k}$ and $\tilde{\pi}^{\textnormal{dis},k}$ as the rounded policies, i.e. $\tilde{\pi}^{\textnormal{rep},k}(r,w) = \texttt{round} [\bar{\pi}^{\textnormal{rep},k}(r,w)]$ for all $(r,w)$, 
$\tilde{\pi}^{\textnormal{dis},k}(z^\textnormal{rep},w) = \texttt{round} [\bar{\pi}^{\textnormal{dis},k}(z^\textnormal{rep},w)]$ for all $(z^\textnormal{rep},w)$, 
where $\texttt{round}[x]$ returns the nearest integer to $x\in\mathbb R$. This is necessary because our approximate thresholds will not be integers. Let $f_t^{\textnormal{rep}} (\tilde{\pi}^{\textnormal{rep},k-1}, \tilde{\pi}^{\textnormal{dis},k-1}; \mathbf{Z}^k_t(w_t), r_t)$ be the Monte Carlo estimates of the replenish-up-to postdecision value starting in period $t$ under the current policy approximations and an initial state $(r_t,w_t)$:
\begin{equation} \label{eq.ft_rep}
    \begin{aligned}
	\textstyle f_t^{\textnormal{rep}} \bigl(\tilde{\pi}^{\textnormal{rep},k-1}, \tilde{\pi}^{\textnormal{dis},k-1}; \mathbf{Z}^k_t(w_t), r_t\bigr)
	=&\textstyle \sum_{\tau=t}^{T-2} \bigl[ -c_{\check{w}_\tau^k} \tilde{z}_\tau^\textnormal{rep} + U_{\check{w}_{\tau}^k,0}^{\mu^*} \bigl( \tilde{z}_\tau^\textnormal{rep} - \tilde{z}_\tau^\textnormal{dis}, \check{\xi}_{\tau,0}^k \bigr) + (c_{\check{w}_{\tau+1}^k} - h) \tilde{z}_\tau^\textnormal{dis} \bigr] \\
	&-c_{\check{w}_{T-1}^k} \tilde{z}_{T-1}^\textnormal{rep} + U_{\check{w}_{T-1}^k}^{\mu^*} \bigl( \tilde{z}_{T-1}^\textnormal{rep} - \tilde{z}_{T-1}^\textnormal{dis}, \check{\xi}_{T-1,0}^k\bigr) - b\,\tilde{z}_{T-1}^\textnormal{dis},
	\end{aligned}
\end{equation}
where for all $\tau\geq t$, $\mu^* = \mu^*_{\check{w}_{\tau}^k}$, $\tilde{z}_\tau^\textnormal{rep} = \tilde{\pi}_\tau^{\textnormal{rep},k-1}(r_\tau,\check{w}_\tau^k)$, 
$\tilde{z}_\tau^\textnormal{dis} = \tilde{\pi}_\tau^{\textnormal{dis},k-1}(\tilde{z}_\tau^\textnormal{rep},\check{w}_\tau^k)$.
Let $f_t^{\textnormal{dis}} (\tilde{\pi}^{\textnormal{rep},k-1}, \tilde{\pi}^{\textnormal{dis},k-1}; \mathbf{Z}^k_t(w_t), z_t^{\textnormal{rep}})$ be the Monte Carlo estimates of the dispense-down-to postdecision value starting in period $t$ under the current policy approximations and an initial state $(z_t^{\textnormal{rep}},w_t)$:
\begin{equation} \label{eq.ft_dis}
    \begin{aligned}
	\textstyle f_t^{\textnormal{dis}} \bigl(\tilde{\pi}^{\textnormal{rep},k-1}, & \tilde{\pi}^{\textnormal{dis},k-1}; \mathbf{Z}^k_t(w_t), z_t^{\textnormal{rep}}\bigr)\\
	&=\textstyle \sum_{\tau=t}^{T-2} \bigl[ (c_{\check{w}_{\tau+1}^k} - h) \tilde{z}_\tau^\textnormal{dis} - c_{\check{w}_{\tau+1}^k} \tilde{z}_{\tau+1}^\textnormal{rep} + U_{\check{w}_{\tau+1}^k,0}^{\mu^*} \bigl( \tilde{z}_{\tau+1}^\textnormal{rep} - \tilde{z}_{\tau+1}^\textnormal{dis}, \check{\xi}_{\tau+1,0}^k \bigr) \bigr] - b\,\tilde{z}_{T-1}^\textnormal{dis},
	\end{aligned}
\end{equation}
where $\tilde{z}_t^\textnormal{dis} = \tilde{\pi}_t^{\textnormal{dis},k-1}(z_t^\textnormal{rep},\check{w}_t^k)$, and for all $\tau\geq t+1$, $\mu^* = \mu^*_{\check{w}_{\tau}^k}$, $\tilde{z}_\tau^\textnormal{rep} = \tilde{\pi}_\tau^{\textnormal{rep},k-1}(r_\tau,\check{w}_\tau^k)$, 
$\tilde{z}_\tau^\textnormal{dis} = \tilde{\pi}_\tau^{\textnormal{dis},k-1}(\tilde{z}_\tau^\textnormal{rep},\check{w}_\tau^k)$.
The replenish-up-to policy is 
\begin{equation*}
    \bar{\pi}_{\tau}^{\textnormal{rep},k}(r_\tau,\check{w}_\tau^k) = \max\{r_\tau, \bar{l}_\tau^{\textnormal{rep},k}(\check{w}_\tau^k)\}.
\end{equation*}
Although there is substantial notation used in defining $f_t^{\textnormal{rep}}$ and $f_t^{\textnormal{dis}}$, we remark that they are simply Monte Carlo observations of the policy's postdecision values respectively corresponding to the replenish-up-to and dispense-down-to decisions.

At each period $t$, to compute the approximate slopes, we use $f_{t}^{\textnormal{dis}}$ to observe values $\hat{V}_t^{\textnormal{rep},k} (z_t^{\textnormal{rep},k}, w_t^k)$ and $\hat{V}_t^{\textnormal{rep},k} (z_t^{\textnormal{rep},k}-1, w_t^k)$, and $f_{t+1}^{\textnormal{rep}}$ to observe values $\hat{V}_t^{\textnormal{dis},k} (z_t^{\textnormal{dis},k}, w_t^k)$ and $\hat{V}_t^{\textnormal{dis},k} (z_t^{\textnormal{dis},k}-1, w_t^k)$, where $f_{t}^{\textnormal{dis}}$ and $f_{t+1}^{\textnormal{rep}}$ are implied by the current policies $\bar{\pi}^{\textnormal{rep},k-1}$ and $\bar{\pi}^{\textnormal{dis},k-1}$; specifically, for $z^\textnormal{rep},z^\textnormal{dis} \ge 0$, the observations $\hat{V}_t^{\textnormal{rep},k}(z^\textnormal{rep},w_t^k)$ and $\hat{V}_t^{\textnormal{dis},k}(z^\textnormal{dis},w_t^k)$ are 
\begin{equation} \label{eq.V-and-f-rep}
	\begin{aligned}
	\hat{V}_t^{\textnormal{rep},k}(z^\textnormal{rep},w_t^k) =& -c_{w_t^k} z^\textnormal{rep} + U_{\check{w}_{t}^k,0}^{\mu^*} \bigl( z^\textnormal{rep} - \tilde{\pi}_t^{\textnormal{dis},k-1}(z^\textnormal{rep},w_t^k), \check{\xi}_{t,0}^k \bigr) \\&+ f_{t}^{\textnormal{dis}} \bigl(\tilde{\pi}^{\textnormal{rep},k-1}, \tilde{\pi}^{\textnormal{dis},k-1}; \mathbf{Z}^k_{t}(w_{t}), z^\textnormal{rep}\bigr),
	\end{aligned}
\end{equation}
and
\begin{equation} \label{eq.V-and-f-dis}
	\begin{aligned}
	\hat{V}_t^{\textnormal{dis},k}(z^\textnormal{dis},w_t^k) =& (c_{w_{t+1}} - h)z^\textnormal{dis} + f_{t+1}^{\textnormal{rep}} \bigl(\tilde{\pi}^{\textnormal{rep},k-1}, \tilde{\pi}^{\textnormal{dis},k-1}; \mathbf{Z}^k_{t+1}(w_{t+1}), z^\textnormal{dis}\bigr),
	\end{aligned}
\end{equation}
where $w_{t+1}$ is sampled from the distribution $W_{t+1}\, |\, W_t=w_t^k$. 
The approximate slopes $\hat{v}_t^{\textnormal{rep},k}$ and $\hat{v}_t^{\textnormal{dis},k}$ are given by: 
\begin{equation} \label{eq.v_rep_hat}
    \hat{v}_t^{\textnormal{rep},k} = \hat{V}_t^{\textnormal{rep},k}(z_t^{\textnormal{rep},k},w_t^k) - \hat{V}_t^{\textnormal{rep},k}(z_t^{\textnormal{rep},k}-1,w_t^k),
\end{equation}
\begin{equation} \label{eq.v_dis_hat}
    \hat{v}_t^{\textnormal{dis},k} = \hat{V}_t^{\textnormal{dis},k}(z_t^{\textnormal{dis},k},w_t^k) - \hat{V}_t^{\textnormal{dis},k}(z_t^{\textnormal{dis},k}-1,w_t^k),
\end{equation}
where we define $\hat{V}_t^{\textnormal{rep},k}(-1,w_t^k) = \hat{V}_t^{\textnormal{dis},k}(-1,w_t^k) \equiv 0$. By doing so, the value assigned to $\hat{v}_t^{\textnormal{rep},k}$ when $z_t^{\textnormal{rep},k}=0$ is actually $\hat{V}_t^{\textnormal{rep},k}(0,w_t^k)$. This also applies to $\hat{v}_t^{\textnormal{dis},k}$. We now summarize the structured actor-critic method; the full details of the approach are given in Algorithm~\ref{alg:sac}. 

\begin{figure}[ht]
	\centering
	\includegraphics[width=0.99\textwidth]{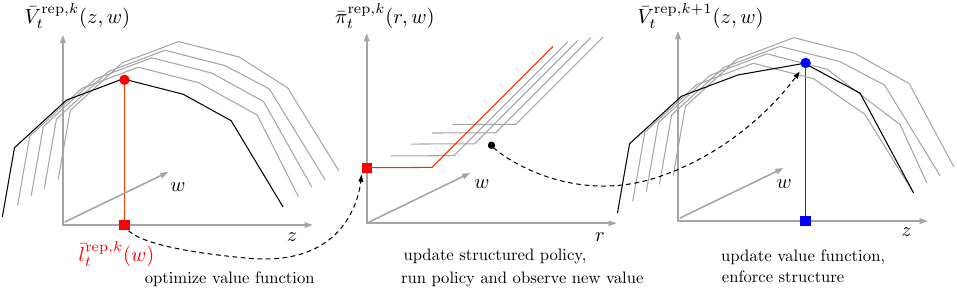}
	\caption{An illustration of the sequence of updates used in the S-AC algorithm.}
	\label{fig:value_policy}
\end{figure}

\IncMargin{1em}
\begin{algorithm}
	\SetKwInput{Input}{Input}\SetKwInput{Output}{Output}
	\Input{
	    Lower-level optimal policy $\mu^*$ (learned from backward dynamic programming). Initial policy estimates $\overbar{l}^{\textnormal{rep},0}$ and $\bar{\pi}^{\textnormal{dis},0}$, and value estimates $\bar{v}^{\textnormal{rep},0}$ and $\bar{v}^{\textnormal{dis},0}$ (nonincreasing in $z^\textnormal{rep}$ and $z^\textnormal{dis}$ respectively).
		Stepsize rules $\tilde{\alpha}_t^k$ and $\tilde{\beta}_t^k$ for all $t,k$.}
		\medskip
	\Output{Approximations $\overbar{l}^{\textnormal{rep},k}$, $\bar{\pi}^{\textnormal{dis},k}$, $\bar{v}^{\textnormal{rep},k}$, and $\bar{v}^{\textnormal{dis},k}$.}
	\BlankLine
	\For{$k=1,2,\ldots$}{
		\vspace{0.5em}
		Sample initial states $z_0^{\textnormal{rep},k}$ and $z_0^{\textnormal{dis},k}$. \label{alg:sac0}\\
		\vspace{0.5em}
		\For{$t=0,1,\ldots,T-1$}{\label{forins}
			\vspace{0.5em}
			Observe $w_t^k$ and $\xi_{t,1}^k$, then observe $\hat{v}_t^{\textnormal{rep},k}$ and $\hat{v}_t^{\textnormal{dis},k}$ according to \eqref{eq.v_rep_hat} and \eqref{eq.v_dis_hat} respectively. \label{alg:sac1}\\
			\vspace{0.5em}
			Perform SA step: \label{alg:sac2}
			\begin{equation*}
			    \begin{aligned}
			    \textstyle \tilde{v}_t^{\textnormal{rep},k}(z^\textnormal{rep},w) &= \bigl(1-\alpha_t^k(z^\textnormal{rep},w)\bigr)\,\bar{v}_t^{\textnormal{rep},k-1}(z^\textnormal{rep},w) + \alpha_t^k(z^\textnormal{rep},w)\, \hat{v}_t^{\textnormal{rep},k}, \\
			    \textstyle \tilde{v}_t^{\textnormal{dis},k}(z^\textnormal{dis},w) &= \bigl(1-\alpha_t^k(z^\textnormal{dis},w)\bigr)\,\bar{v}_t^{\textnormal{dis},k-1}(z^\textnormal{dis},w) + \alpha_t^k(z^\textnormal{dis},w)\, \hat{v}_t^{\textnormal{dis},k}.
			    \end{aligned}
			\end{equation*}
			\\
			\vspace{0.5em}
			Perform the concavity projection operation \eqref{eq.concavity-projection}: \label{alg:sac3}
			\begin{equation*}
			    \textstyle \bar{v}_t^{\textnormal{rep},k} = \Pi_{z_t^{\textnormal{rep},k},w_t^k} (\tilde{v}_t^{\textnormal{rep},k}), \;\;\;\; \bar{v}_t^{\textnormal{dis},k} = \Pi_{z_t^{\textnormal{dis},k},w_t^k} (\tilde{v}_t^{\textnormal{dis},k}).
			\end{equation*}
			\\
			\vspace{0.5em}
			Observe and update the replenish-up-to threshold: \label{alg:sac4}
			\begin{equation*}
			    \hat{l}_t^{\textnormal{rep},k} =\textstyle \argmax_{z^\textnormal{rep} \in \bar{\mZ}(0)} \sum_{j=0}^{z^\textnormal{rep}} \bar{v}_t^{\textnormal{rep},k}\bigl(j,w_t^k\bigr),
			\end{equation*}
			\begin{equation*}
			    \overbar{l}_t^{\textnormal{rep},k}(w) = \bigl(1-\beta_t^k(w)\bigr)\,\overbar{l}_t^{\textnormal{rep},k-1}(w) + \beta_t^k(w)\,\hat{l}_t^{\textnormal{rep},k}.
			\end{equation*}
			\\
			\vspace{0.5em}
			Observe and update the dispense-down-to policy: \label{alg:sac5}
			\\
			\vspace{0.5em}
			\For{$z_t^{\textnormal{rep}} = 0,1,\ldots, R_{\textnormal{max}}$ \label{alg:sac6}}{
			\begin{equation*}
			    \hat{\pi}_t^{\textnormal{dis}} =\textstyle \argmax_{z^\textnormal{dis} \in \underline{\mZ}(z_t^{\textnormal{rep}})} U_{w_t^k,0}^{\mu^*}\bigl(z_t^{\textnormal{rep}} - z^\textnormal{dis}, \xi_{t,0}^k\bigr) + \sum_{j=0}^{z^\textnormal{dis}} \bar{v}_t^{\textnormal{dis},k}\bigl(j,w_t^k\bigr),
			\end{equation*}
			\begin{equation*}
			    \bar{\pi}_t^{\textnormal{dis},k} (z^{\textnormal{rep}},w) = \bigl(1-\alpha^k(z^{\textnormal{rep}},w)\bigr)\,\bar{\pi}_t^{\textnormal{dis},k-1}(z^{\textnormal{rep}},w) + \alpha^k(z^{\textnormal{rep}},w)\,\hat{\pi}_t^{\textnormal{dis}}.
			\end{equation*}
			}\label{alg:sac7}
			\vspace{0.5em}
			If $t<T-1$, take $z_{t+1}^{\textnormal{rep},k}$ and $z_{t+1}^{\textnormal{dis},k}$ according to the $\epsilon$-greedy exploration policy.
		}
	}
	\caption{Structured Actor-Critic Method}
	\label{alg:sac}
\end{algorithm}\DecMargin{1em}

\begin{itemize}
	\item The inputs of Algorithm~\ref{alg:sac} are a random initial basestock policy $\overbar{l}^{\textnormal{rep},0}$, and concave, piecewise linear value function approximations $\bar{v}^{\textnormal{rep},0}$ and $\bar{v}^{\textnormal{dis},0}$.
	
	\item Each iteration $k$ consists of a loop through the time periods $t$.
	
	\item At period $t$, the approximate slopes are updated in Lines~\ref{alg:sac1}--\ref{alg:sac3}. Based on $z_t^{\textnormal{rep},k}$, $z_t^{\textnormal{dis},k}$ and $\mathbf{Z}_t^k(w_t^k)$, we first observe the sequences of the predecision resource $\{r_{t+1}, r_{t+2}, \ldots, r_T\}$ and the postdecision resources $\{z_t^{\textnormal{rep},k}, z_{t+1}^{\textnormal{rep}}, \ldots, z_{T-1}^{\textnormal{rep}}\}$ and $\{z_t^{\textnormal{dis},k}, z_{t+1}^{\textnormal{dis}}, \ldots, z_{T-1}^{\textnormal{dis}}\}$. These are computed according to \eqref{eq.trans}, and the equations $z_\tau^{\textnormal{rep}} = \tilde{\pi}_\tau^{\textnormal{rep},k-1}(r_\tau, w_\tau^k)$, and $z_\tau^\textnormal{dis} = \argmax_{z^\textnormal{dis} \in \underline{\mZ}(\tilde{z}_\tau^\textnormal{rep})} U_{\check{w}_{\tau}^k,0}^{\mu^*} \bigl( \tilde{z}_\tau^\textnormal{rep} - z^\textnormal{dis}, \check{\xi}_{\tau,0}^k \bigr) + \bar{V}_{\tau}^{\textnormal{dis},k-1} \bigl(z^\textnormal{dis}, \check{w}_\tau^k\bigr)$ for all $\tau \ge t+1$. In the following illustration, let us take the value slope and policy corresponding to the replenish-up-to decision as an example, those corresponding to the dispense-down-to decision are similar.
	
	\item The observation of the slope $\hat{v}_t^{\textnormal{rep},k}$ \emph{implied by the policies} $\tilde{\pi}^{\textnormal{rep},k-1}$ and $\tilde{\pi}^{\textnormal{dis},k-1}$ is computed using \eqref{eq.V-and-f-rep} and \eqref{eq.v_rep_hat} and used to calculate the smoothed slopes $\tilde{v}_t^{\textnormal{rep},k}(z^{\textnormal{rep}},w)$ in Line~\ref{alg:sac2}, where $\alpha_t^k(z^\textnormal{rep},w) = \tilde{\alpha}_t^k \1\{z^{\textnormal{rep}} = z_t^{\textnormal{rep},k}\} \1\{w=w_t^k\}$. Thus, only the state $(z_t^{\textnormal{rep},k},w_t^k)$ is updated. 
	
	\item A concavity projection operation in Line~\ref{alg:sac3} is performed on the slopes $\tilde{v}_t^{\textnormal{rep},k}$, resulting in a new set of slopes $\Pi_{z_t^{\textnormal{rep},k}, w_t^k} (\tilde{v}_t^{\textnormal{rep},k})$, in order to avoid violation of concavity. The component of $\Pi_{z_t^{\textnormal{rep},k}, w_t^k} (\tilde{v}_t^{\textnormal{rep},k})$ at state $(z^{\textnormal{rep}},w)$ is
	\begin{equation} \label{eq.concavity-projection}
	    \begin{aligned}
    	&\Pi_{z_t^{\textnormal{rep},k}, w_t^k} (\tilde{v}_t^{\textnormal{rep},k})[z^{\textnormal{rep}},w] \\
    	&= \begin{cases}
    	\tilde{v}_t^{\textnormal{rep},k} ( z_t^{\textnormal{rep},k}, w_t^k ) & \text{if}\;\; w = w_t^k,\; z^{\textnormal{rep}} < z_t^{\textnormal{rep},k},\; \tilde{v}_t^{\textnormal{rep},k}(z^{\textnormal{rep}},w) < \tilde{v}_t^{\textnormal{rep},k} (z_t^{\textnormal{rep},k}, w_t^k) \\
    	& \text{or}\; w = w_t^k,\; z^{\textnormal{rep}} > z_t^{\textnormal{rep},k},\; \tilde{v}_t^{\textnormal{rep},k}(z^{\textnormal{rep}},w) > \tilde{v}_t^{\textnormal{rep},k} (z_t^{\textnormal{rep},k}, w_t^k), \\
    	\tilde{v}_t^{\textnormal{rep},k}(z^{\textnormal{rep}},w) & \text{otherwise}.
    	\end{cases}
    	\end{aligned}
	\end{equation}
	
	\item The approximate replenish-up-to policy is updated in Lines~\ref{alg:sac4}. The observation $\hat{l}_t^{\textnormal{rep},k}$ is the maximum point of $\bar{V}_t^{\textnormal{rep},k}(\cdot,w_t^k)$ inside the set $\mZ(0)$, which is the implied replenish-up-to basestock threshold from the value function approximation. Given the observation, the policy is updated with stepsize $\beta_t^k(w) = \tilde{\beta}_t^k \1\{w=w_t^k\}$. 
	
	\item The approximate dispense-down-to policy is updated in Lines~\ref{alg:sac6}--\ref{alg:sac7}. For each $z^\textnormal{rep}$, we can observe $\hat{\pi}_t^{\textnormal{dis}}$ according to \eqref{eq.policy_dis}. The policy is updated with the observation and stepsize $\alpha_t^k(z^\textnormal{rep},w) = \tilde{\alpha}_t^k \1\{z^{\textnormal{rep}}=z_t^{\textnormal{rep}}\} \1\{w=w_t^k\}$. 
	
	\item Finally, the next replenish-up-to decision follows an $\epsilon$-greedy policy, which is to select $z_{t+1}^{\textnormal{rep},k} = \tilde{\pi}_\tau^{\textnormal{rep},k-1}(r_\tau, w_\tau^k)$ with probability $1-\epsilon$, or take $z_{t+1}^{\textnormal{rep},k}$ randomly from $\mZ(r_{t+1}^k)$ with probability $\epsilon$. In our numerical experiments, $\epsilon$ is chosen to be 0.1.
\end{itemize}

\looseness-1 Figure~\ref{fig:value_policy} illustrates how the replenish-up-to value function and policy approximations interact with each other. The first two panels together show that given a structured value function, its maximizer (red square) is used to update the structured policy. Panels two and three together show that an observation of the current policy's value (blue circle) is in turn used to update the structured value function (where a projection step occurs to enforce structure). The process then repeats with the new maximizer (blue square).

\subsection{Convergence Analysis} \label{sec:convanalysis}

In this section, we give some theoretical assumptions and then state the convergence of Algorithm~\ref{alg:sac}; in particular, the convergence of both the value function approximations $\bar{v}^{\textnormal{rep},k}$ and $\bar{v}^{\textnormal{dis},k}$ and the basestock policies $\overbar{l}^{\textnormal{rep},k}$ and $\bar{\pi}^{\textnormal{dis},k}$. Let $\{\bar{v}_t^{\textnormal{rep},k}\}_{k \geq 0}$ and $\{\bar{v}_t^{\textnormal{dis},k}\}_{k \geq 0}$ be the sequences of slopes, let $\{\overbar{l}_t^{\textnormal{rep},k}\}_{k \geq 0}$ and $\{\bar{\pi}_t^{\textnormal{dis},k}\}_{k \geq 0}$, be the sequences of policies generated by the algorithm. For period $T$, we assume $v_T^{\textnormal{rep}}(z^{\textnormal{rep}},w) = 0$ for all iterations $k\geq 0$ and all possible postdecision states $(z^{\textnormal{rep}},w)$, as we only need to learn the policy and slopes up to period $T-1$. 
We work on a probability space $(\Omega, \mathcal{F}, \mathbf{P})$, where $\mathcal{F} = \sigma\{(r_t^k, z_t^{\textnormal{rep},k}, z_t^{\textnormal{dis},k}, w_t^k, \boldsymbol{\xi}_t^k, \mathbf{D}_t^k, \hat{v}_t^k), \,t\leq T, \, k \ge 0\}$, where $\boldsymbol{\xi}_t^k = (\xi_{t,1}^k, \xi_{t,2}^k, \ldots, \xi_{t,n_k}^k)$, $\mathbf{D}_t^k = (D_{t,1}^k, D_{t,2}^k, \ldots, D_{t,n_k}^k)$. Moreover, we define 
\begin{equation*}
	\mathcal{F}_t^k = \sigma \bigl\{ \{(r_\tau^{k'}, z_\tau^{\textnormal{rep},k'}, z_\tau^{\textnormal{dis},k'}, w_\tau^{k'}, \boldsymbol{\xi}_\tau^{k'}, \mathbf{D}_\tau^{k'}, \hat{v}_\tau^{k'}), {k'}<k, \tau\leq T\} \cup \{(r_\tau^k, z_\tau^{\textnormal{rep},k}, z_\tau^{\textnormal{dis},k}, w_\tau^k, \boldsymbol{\xi}_\tau^k, \mathbf{D}_\tau^k, \hat{v}_t^k), \tau\leq t\} \bigr\}, 
\end{equation*}
for $t\leq T-1$ and $k\geq 1$, with $\mathcal{F}_t^0 = \{\emptyset,\Omega\}$ for all $t \leq T$. Their relationships are $ \mathcal{F}_t^k \subseteq \mathcal{F}_{t+1}^k $ for $t \leq T-1$ and $ \mathcal{F}_T^k \subseteq \mathcal{F}_0^{k+1} $. 

\begin{assumption} \label{assumption: stepsize}
	For any $z$ and $w$, suppose the stepsize sequences $\bigl \{ \alpha_t^k(z^{\textnormal{rep}},w) \bigr \}$, $\bigl\{ \alpha_t^k(z^{\textnormal{dis}},w) \bigr \}$, and $\{\beta_t^k(w)\}$ satisfy the following conditions: 
	\begin{enumerate}[label=(\roman*)] 
		\item For $\textnormal{x}\in\{\textnormal{rep}, \textnormal{dis}\}$,  $\alpha_t^k(z^{\textnormal{x}},w) = \tilde{\alpha}_t^k \1\{z^{\textnormal{x}}=z_t^{\textnormal{x},k}\} \1\{w=w_t^k\} $ for some $ \tilde{\alpha}_t^k \in \R $ that is $\mathcal{F}_t^k$-measurable,
		\item $\beta_t^k(w) = \tilde{\beta}_t^k \1\{w=w_t^k\}$ for some $\tilde{\beta}_t^k \in \R$ that is $\mathcal{F}_t^k$-measurable,
		\item For $\textnormal{x}\in\{\textnormal{rep}, \textnormal{dis}\}$, $\sum_{k = 0}^{\infty} \alpha_t^k(z^{\textnormal{x}},w) = \infty$, $\sum_{k = 0}^{\infty} \, \bigl(\alpha_t^k(z^{\textnormal{x}},w)\bigr)^2 < \infty$ almost surely, 
		\item $\sum_{k = 0}^{\infty} \beta_t^k(w) = \infty$, $\sum_{k = 0}^{\infty} \, \bigl(\beta_t^k(w)\bigr)^2 < \infty$ almost surely. 
	\end{enumerate}
\end{assumption}

Assumption~\ref{assumption: stepsize}(\romannum{1}) and (\romannum{2}) ensures that only the slope and threshold for the observed state is updated in Line~\ref{alg:sac2} of Algorithm~\ref{alg:sac}; the ones corresponding to unobserved states are kept the same until the projection step. Parts (\romannum{3}) and (\romannum{4}) are standard conditions on the stepsize. To keep the convergence results clean, we also assume the state-dependent basestock thresholds are unique (this assumption can be easily relaxed).

\begin{assumption} \label{assumption:unique basestock}
There is a unique optimal solution to $\max_{z \in\mZ(0)} \; \tilde{V}_t^{\textnormal{rep}}(z,w)$, which implies that there is a single optimal replenishment basestock threshold for each $w$. The unique optimal solution assumption also applies to $\tilde{V}_t^{\textnormal{dis}}$.
\end{assumption}

Assumptions (\ref{assumption: properties of u})-(\ref{assumption:unique basestock}) are used for the next two results. The primary novel aspect of our analysis is to connect the approximate policies with the approximate value functions through the structural properties of the problem. Before stating the main convergence result, Theorem~\ref{thm: v and z converge}, we introduce a lemma that illustrates the crucial mechanism for convergence. 

\begin{restatable}{lemma}{lemmavtConverge} 
	\label{lemma: convergence of vt}
	The following hold:
	\begin{enumerate}
	    \item For any fixed period $t$, suppose that the policies $\bar{\pi}_{\tau}^{\textnormal{rep},k} \rightarrow \pi_\tau^{\textnormal{rep}}$ almost surely for $\tau \geq t+1$, and $\bar{\pi}_{\tau}^{\textnormal{dis},k} \rightarrow \pi_\tau^{\textnormal{dis}}$ almost surely for $\tau \geq t$. Then it holds that $\bar{v}_t^{\textnormal{rep},k}(z^{\textnormal{rep}},w) \rightarrow v_t^{\textnormal{rep}}(z^{\textnormal{rep}},w)$ almost surely.
	    \item For any fixed period $t$, suppose that the policies $\bar{\pi}_{\tau}^{\textnormal{rep},k} \rightarrow \pi_\tau^{\textnormal{rep}}$ and $\bar{\pi}_{\tau}^{\textnormal{dis},k} \rightarrow \pi_\tau^{\textnormal{dis}}$ almost surely for $\tau \geq t+1$. Then it holds that $\bar{v}_t^{\textnormal{dis},k}(z^{\textnormal{dis}},w) \rightarrow v_t^{\textnormal{dis}}(z^{\textnormal{dis}},w)$ almost surely.
	\end{enumerate}

\end{restatable}

\begin{proof}[Sketch of Proof]
    Let us show part (1) of the lemma. The proof for part (2) is similar. We first construct two deterministic sequences $\{G^m\}$ and $\{I^m\}$ such that $G^0 = v^{\textnormal{rep}} + v_\text{max}^{\textnormal{rep}}$ and $I^0 = v^{\textnormal{rep}} - v_\text{max}^{\textnormal{rep}}$ with
	\begin{equation*}
		G^{m+1} = \frac{G^m + v^{\textnormal{rep}}}{2}\quad \text{and} \quad I^{m+1} = \frac{I^m + v^{\textnormal{rep}}}{2},
	\end{equation*}
	where $|v_t^{\textnormal{rep}}(z^{\textnormal{rep}},w)| \le v_\text{max}^{\textnormal{rep}}$ for all $t$, $z^{\textnormal{rep}}$, and $w$. These sequences have been previously used in \cite{bertsekas1996neuro-dynamic}. Lemma~\ref{lemma: convergence of vt} is proved if we have
	\begin{equation} \label{eq.bounds}
	    I_t^m(z^{\textnormal{rep}},w) \leq \bar{v}_t^{\textnormal{rep},k-1}(z^{\textnormal{rep}},w) \leq G_t^m(z^{\textnormal{rep}},w),
	\end{equation}
	for any $m$ and sufficiently large $k$. The proof proceeds by showing the following. 
	\begin{enumerate}
	    \item Define noise terms $\epsilon_t^{k}(z_t^{\textnormal{rep},k},w_t^k) = \E \bigl[\hat{v}_t^{\textnormal{rep},k}\bigr] - v_t^{\textnormal{rep}}(z_t^{\textnormal{rep},k}, w_t^k)$ and $\varepsilon_t^{k}(z_t^{\textnormal{rep},k}, w_t^k) = \hat{v}_t^{\textnormal{rep},k} - \E\bigl[\hat{v}_t^{\textnormal{rep},k}\bigr]$. Recall that $\hat{v}_t^{\textnormal{rep},k} = \hat{V}_t^{\textnormal{rep},k}(z_t^{\textnormal{rep},k},w_t^k) - \hat{V}_t^{\textnormal{rep},k}(z_t^{\textnormal{rep},k}-1,w_t^k)$, where
	    \begin{equation*}
        	\begin{aligned}
        	\hat{V}_t^{\textnormal{rep},k}(z^\textnormal{rep},w_t^k) =& - c_{w_t^k} z^\textnormal{rep} + U_{\check{w}_{t}^k,0}^{\mu^*} \bigl(z^\textnormal{rep} - \tilde{\pi}_t^{\textnormal{dis},k-1}(z^\textnormal{rep},w_t^k), \check{\xi}_{t,0}^k \bigr) \\&+ f_{t}^{\textnormal{dis}} \bigl(\tilde{\pi}^{\textnormal{rep},k-1}, \tilde{\pi}^{\textnormal{dis},k-1}; \mathbf{Z}^k_{t}(w_{t}), z^\textnormal{rep}\bigr).
        	\end{aligned}
        \end{equation*}
	    From the assumption that $\bar{\pi}_{\tau}^{\textnormal{rep},k} \rightarrow \pi_\tau^{\textnormal{rep}}$ and $\bar{\pi}_{\tau}^{\textnormal{dis},k} \rightarrow \pi_\tau^{\textnormal{dis}}$ almost surely for all $\tau \geq t+1$, and the fact that $f_{t}^{\textnormal{dis}} \bigl(\tilde{\pi}^{\textnormal{rep},k-1}, \tilde{\pi}^{\textnormal{dis},k-1}; \mathbf{Z}^k_{t}(w), z^\textnormal{rep}\bigr)$ depends on the replenish-up-to policies for periods $t+1$ onward and the dispense-down-to policies for periods $t$ onward, we conclude that 
	    \[\mathbf{E}_w \bigl[f_{t}^{\textnormal{dis}} \bigl(\tilde{\pi}^{\textnormal{rep},k-1}, \tilde{\pi}^{\textnormal{dis},k-1}; \mathbf{Z}_{t}^k(w), z^\textnormal{rep}\bigr) \bigr] \rightarrow \tilde{V}_{t}^{\textnormal{dis}}\bigl(\pi_{t}^{\textnormal{dis},*}(z^\textnormal{rep},w),w\bigr)
	    \]
	    almost surely.
	    Therefore, $\epsilon_t^{k}(z_t^{\textnormal{rep},k},w_t^k)$ converges to zero almost surely and $\varepsilon_t^{k}(z_t^{\textnormal{rep},k}, w_t^k)$ is unbiased.
	    
	    \item We partition the state space $\mS$ into two parts: (1) states $ (z^{\textnormal{rep}},w) \in \mS^-_t $ and (2) states $ (z^{\textnormal{rep}},w)\in \mS \setminus \mS_t^- $, where $\mS_t^-$ is a random set of states that are increased by the projection operator \eqref{eq.concavity-projection} on finitely many iterations $k$. The proof considers each partition separately to show \eqref{eq.bounds}. For states $(z^{\textnormal{rep}},w) \in \mS^-_t$, we show by forward induction on $m$ the existence of a finite index $\tilde{K}_t^m$ such that \eqref{eq.bounds} holds for all iterations $k \geq \tilde{K}_t^m$. The proof utilizes stochastic sequences related to the noise terms and stochastic ``bounding'' sequences. For any state $(z^{\textnormal{rep}},w) \in \mS \setminus \mS^-_t$ and a fixed $m$, by Lemma 6.4 of \cite{nascimento2009optimal}, we show the existence of a state-dependent random index $\hat{K}_t^{m}(z^{\textnormal{rep}},w)$ such that \eqref{eq.bounds} holds for all $k \ge \hat{K}_t^{m}(z^{\textnormal{rep}},w)$.
	\end{enumerate}
	See Appendix~\ref{appendix. proof of convergence of vt} for the full details of the proof.
\end{proof}

Lemma~\ref{lemma: convergence of vt} implies the convergence of the approximate slopes $\bar{v}^k$ to the true slopes $v$ as long as the policy approximation converges correctly.

\begin{restatable}{theorem}{thmvandzconverge}
	\label{thm: v and z converge}
	For $\textnormal{x}\in \{\textnormal{rep}, \textnormal{dis}\}$, the slope approximation $\bar{v}_t^{\textnormal{x},k}(z^{\textnormal{x}},w)$ converges to the slope of the postdecision value function $v_t^{\textnormal{x}}(z^{\textnormal{x}},w)$ almost surely for all $(z^{\textnormal{x}},w)$ and $t$; the policy approximations $\bar{\pi}_t^{\textnormal{rep},k}(r,w)$ and $\bar{\pi}_t^{\textnormal{dis},k}(z^{\textnormal{rep}},w)$ respectively converge to the optimal policies $\pi_t^{\textnormal{rep}}(r,w)$ and $\pi_t^{\textnormal{dis}}(z^{\textnormal{rep}},w)$ almost surely for all $r$, $z^{\textnormal{rep}}$, $w$ and $t$.
\end{restatable}
\begin{proof}[Sketch of Proof]
	The proof depends inductively on Lemma~\ref{lemma: convergence of vt}. Given its result for period $t$, we can then argue the convergence of policy approximations $\bar{\pi}_t^{\textnormal{rep},k}(r,w)$ and $\bar{\pi}_t^{\textnormal{dis},k}(z^{\textnormal{rep}},w)$. This allows us to re-apply Lemma~\ref{lemma: convergence of vt} on period $t-1$. The details are given in Appendix~\ref{appendix: proof theorem}.
\end{proof}

\section{Numerical Experiments} \label{sec:numerical}

In this section, we test the performance of our algorithm empirically and compare its convergence rate with other ADP algorithms on a common set of several benchmark problems with different state space sizes. Specifically, we compare with SPAR, a standard actor-critic method with a linear architecture, a policy gradient method with a linear architecture, and tabular Q-learning. We begin by giving a brief description of these algorithms.

\begin{itemize}
	\item The multi-stage version of SPAR, introduced in \cite{nascimento2009optimal}, takes advantage of the concavity of the value function and uses the temporal difference to update slopes without a policy approximation. More specifically, in order to generate observations $\hat{V}_t^{\textnormal{rep},k}$ and $\hat{V}_t^{\textnormal{dis},k}$, instead of using \eqref{eq.V-and-f-rep} and \eqref{eq.V-and-f-dis}, SPAR uses
	\begin{equation*}
	    \textstyle \hat{V}_t^{\textnormal{rep},k} (z^\textnormal{rep}, w_t^k) = -c_{w_t^k} z^\textnormal{rep} + \max_{z^\textnormal{dis} \leq z^\textnormal{rep}} \bigl\{U_{w_t^k,0}^{\mu^*} \bigl( z^\textnormal{rep} - z^\textnormal{dis}, \xi_{t,0}^{k} \bigr) + \bar{V}_t^{\textnormal{dis},k-1}\bigl(z^\textnormal{dis}, w_t^k\bigr) \bigr\},
	\end{equation*}
	and
	\begin{equation*}
	    \textstyle \hat{V}_t^{\textnormal{dis},k} (z^\textnormal{dis}, w_t^k) = (c_{w_{t+1}} - h) z^\textnormal{dis} + \max_{z^\textnormal{rep} \geq z^\textnormal{dis}} \bar{V}_{t+1}^{\textnormal{rep},k-1}\bigl(z^\textnormal{rep}, w_{t+1}\bigr) 
	\end{equation*}
	respectively.
	Although the original specification of SPAR does not use an exploration policy, we implemented $\epsilon$-greedy with exploration rate $0.1$ for improved performance.
	
	\item We implement an actor-critic (AC) method \citep{sutton1998reinforcement} based on a linear approximation architecture for both the policy and value approximations. In both cases, the basis functions are chosen to be Gaussian radial basis functions (RBFs). The ``critic'' approximates the value function using a weighted sum of RBF basis functions. The ``actor'' is a stochastic policy with a parameter $h_t(r,w; z^{\textnormal{rep}}, z^{\textnormal{dis}})$ for each state-action pair $(r,w; z^{\textnormal{rep}}, z^{\textnormal{dis}})$, and is also approximated using a weighted sum of RBFs, which indicate the tendency of selecting action $(z^{\textnormal{rep}}, z^{\textnormal{dis}})$ in state $(r,w)$. The associated stochastic policy is obtained through a softmax function, so that the probability of taking action $(z^{\textnormal{rep}}, z^{\textnormal{dis}})$ in state $(r,w)$ is $\pi_t(z^{\textnormal{rep}}, z^{\textnormal{dis}}\,|\,r,w) = e^{h(r,w; z^{\textnormal{rep}}, z^{\textnormal{dis}})}/\sum_{(z_1,z_2)} e^{h(r,w; z_1,z_2)}.$
	Detailed steps of the method are shown in Appendix~\ref{sec:alg_ac}. 
	
	\item \looseness-1 Our policy gradient (PG) method \citep{williams1992simple,sutton2000policy} updates the stochastic policy in each iteration. We adopt the Monte-Carlo policy gradient method where the policy approximation follows the same softmax policy as in the AC algorithm above. There is no value function and the policy parameters are updated using a sampled cumulative reward from $t$ to $T$.
	
	\item The previous two algorithms use linear architectures for generalization. We also compare to the widely-used Q-learning (QL) algorithm \citep{watkins1989learning}, which is called \emph{tabular} because each state-action pair is updated independently (structured actor-critic and SPAR lie in-between these two extremes as they generalize by enforcing structure). Q-learning aims to learn the state-action value function: 
	\begin{equation*}
		Q_t(r,w;z^{\textnormal{rep}},z^{\textnormal{dis}}) = (c_w-h)r - c_w z^{\textnormal{rep}} + \E_w\bigl[ U_{w_t,0}^{\mu^*} (z^{\textnormal{rep}} - z^{\textnormal{dis}}, \Xi_{t,0}) + V_{t+1} \left(z^{\textnormal{dis}},W_{t+1}\right) \bigr]. 
	\end{equation*}
	Our implementation is a standard finite-horizon version of the algorithm that uses an $\epsilon$-greedy exploration policy at a rate of $0.1$.
\end{itemize}
Optimal benchmarks used to determine the effectiveness of the five algorithms were computed using standard backward dynamic programming (BDP). All computations in this paper were performed using \texttt{Python}~3.5.

\subsection{Benchmark Instances and Parameters} 
\label{sec:benchmark_instances}
\looseness-1 {We consider 10 PODs in these synthetic benchmark instances. Each POD has a randomly generated attribute ranging between 0 and 1 representing its priority, which is reflected in the utility function $u$. Let the stochastic utility function be $\tilde{u}\bigl(\min(y_i,D_i), \xi_i\bigr)$, with expectation $u_w(x_i,\xi_i) = \E_w \bigl[ \tilde{u} \bigl(\min(\mu_i(x_i,\xi_i), D_i), \xi_i\bigr) \bigr]$, where $\mu_i$ and $D_i$ are respectively the policy and the amount of demand in sub-period $i$.\footnote{In reality, the utility and demand might be revealed several periods later. For modeling purposes, we assume that they are revealed by the end of the current period in this section.} 
Let $\tilde{u}\bigl(z, \xi_i\bigr)$ be nondecreasing and discretely concave in $z$, then $\tilde{u}\bigl(\min(y_i,D_i), \xi_i\bigr)$ is $L^\natural$-concave in $y_i$ based on Lemma 2 in \citet{chen2014coordinating}, the structural properties are kept for this stochastic utility function.\footnote{Specifically, we generate the stochastic utility function $\tilde{u}(z,\xi)$ by generating its unit utility function $\Delta \tilde{u}(z,\xi) = \tilde{u}(z,\xi) - \tilde{u}(z-1,\xi)$ as follows: $\Delta \tilde{u}(1,\xi) = 100 \,(5\, \xi^3 + 1)$, $\Delta \tilde{u}(z,\xi) = \Delta \tilde{u}(z-1,\xi) - 10\, (5\, \xi^4)$.}}
For each exogenous information realization $w$, we randomly generated 10 different patterns of the arriving POD sequences.\footnote{A pattern of the arriving POD sequences was generated from randomly sampling ten elements from a pool which contains all the PODs and some empty elements. The number of the empty elements is dependent on $w$. For example, in the case of $|\mathcal{W}|=3$, the numbers of the empty elements are 5, 10, and 15 for $w=1$, $2$, and $3$ respectively. The utility of an empty element is 0.}

Our interpretation of the stochastic process $\{W_t\}$ is a signal of the total demand\footnote{An example for $\{W_t\}$ is the national trends of the particular public health situation, which may suggest higher demands in the region-of-interest.} for period $t$. For benchmarking purposes, we use the model $W_{t+1} = \varphi_t W_t + \hat{W}_{t+1}$, where $\varphi_t$ is deterministic and $\hat{W}_{t+1}$ is an independent noise term that follows a mean zero discretized normal distribution with standard deviation $\sigma_{t+1}$. In this paper, a continuously distributed random variable $X$ is discretized to $X_\textnormal{disc}$ with $\mathbf{P}(X_\textnormal{disc} = x) = \mathbf{P}(X\leq x) - \mathbf{P}(X \leq x-1)$. Given a demand signal $W_t = w_t$, the realized demand $D_{t,i}$ is a discretized normal distribution with mean $d_i(w_t)$ and standard deviation $\tilde{\sigma}_t=3$ for $i=1,2,\ldots,n$. All of the means above were generated randomly. 

We created 25 benchmark problem instances by varying the sizes of the state, action, and outcome spaces (i.e., number of possible values of the exogenous information). Specifically, we consider problem instances with 21, 31, 41, 51, and 61 inventory levels and 3, 6, 9, 12, and 15 information states; these are the columns and rows shown in Tables~\ref{table.performance_iter} and \ref{table.performance_time}. The sizes of the action spaces corresponding to inventory level sizes 21, 31, 41, 51, and 61 are respectively 231, 496, 861, 1326, and 1891. The time horizon for each instance is $T=10$ and the cost parameters are $b=0$, $h=5$, $c_w\in[10, 50]$, $\E[c_w]=30$.

\renewcommand\arraystretch{1.2}
\begin{table}[ht]
\centering
\footnotesize
\caption{Performance (\% optimality) at iterations 500 and 1000.}
\label{table.performance_iter}
\begin{tabular}{cc|ccccc|ccccc}
\toprule

    & & \multicolumn{5}{c|}{At iteration 500} & \multicolumn{5}{c}{At iteration 1000} \\
    $R_\textnormal{max}$ & $|\mathcal{W}|$ & 3 & 6 & 9 & 12 & 15 & 3 & 6 & 9 & 12 & 15 \\\hline
    \multirow{5}{*}{20} & AC            & 97.20          & 97.68          & 98.01          & 97.41          & 96.88          & 98.86          & 99.03          & 98.50          & 98.38          & 97.60          \\
                    & PG            & 73.04          & 76.02          & 72.35          & 76.64          & 74.29          & 77.94          & 79.12          & 73.35          & 79.16          & 75.38          \\
                    & QL            & 30.02          & 33.86          & 28.36          & 27.85          & 35.53          & 32.60          & 35.91          & 31.75          & 31.20          & 37.63          \\
                    & S-AC (ours) & \textbf{99.76} & \textbf{99.26} & \textbf{98.33} & \textbf{97.68} & \textbf{97.45} & \textbf{99.83} & \textbf{99.57} & \textbf{99.00} & \textbf{98.48} & \textbf{98.50} \\
                    & SPAR          & 97.82          & 95.11          & 95.10          & 94.69          & 92.36          & 96.95          & 97.55          & 93.80          & 94.33          & 95.87          \\\hline
\multirow{5}{*}{30} & AC            & 97.21          & 96.40          & 95.75          & 95.17          & 94.91          & 97.65          & 97.13          & 96.40          & 96.31          & 95.27          \\
                    & PG            & 69.97          & 72.24          & 76.48          & 73.36          & 78.19          & 76.07          & 74.15          & 76.91          & 81.04          & 78.30          \\
                    & QL            & 38.26          & 34.09          & 28.84          & 27.47          & 34.21          & 40.35          & 37.14          & 35.43          & 33.99          & 37.78          \\
                    & S-AC (ours) & \textbf{99.58} & \textbf{99.36} & \textbf{98.53} & \textbf{97.70} & \textbf{97.61} & \textbf{99.83} & \textbf{99.67} & \textbf{99.18} & \textbf{98.67} & \textbf{98.60} \\
                    & SPAR          & 97.85          & 97.94          & 92.57          & 95.11          & 92.58          & 98.62          & 97.88          & 95.24          & 95.12          & 94.46          \\\hline
\multirow{5}{*}{40} & AC            & 96.30          & 95.16          & 91.63          & 93.24          & 92.15          & 96.70          & 96.05          & 92.56          & 93.94          & 92.54          \\
                    & PG            & 72.95          & 77.04          & 75.57          & 73.92          & 78.39          & 76.51          & 77.78          & 75.90          & 75.39          & 79.15          \\
                    & QL            & 39.65          & 35.40          & 26.71          & 24.70          & 32.36          & 42.20          & 40.57          & 35.20          & 33.44          & 37.63          \\
                    & S-AC (ours) & \textbf{99.45} & \textbf{99.35} & \textbf{97.95} & \textbf{97.86} & \textbf{97.50} & \textbf{99.65} & \textbf{99.61} & \textbf{98.90} & \textbf{98.53} & \textbf{98.43} \\
                    & SPAR          & 97.46          & 96.08          & 93.50          & 93.79          & 93.74          & 96.79          & 96.62          & 95.33          & 93.81          & 92.07          \\\hline
\multirow{5}{*}{50} & AC            & 90.96          & 90.56          & 86.47          & 88.00          & 88.02          & 91.65          & 91.76          & 87.18          & 89.03          & 89.73          \\
                    & PG            & 72.06          & 70.67          & 66.57          & 69.34          & 76.81          & 73.95          & 74.75          & 67.71          & 71.71          & 78.05          \\
                    & QL            & 41.63          & 36.35          & 26.26          & 22.00          & 29.87          & 47.68          & 42.72          & 35.22          & 31.90          & 35.60          \\
                    & S-AC (ours) & \textbf{99.42} & \textbf{99.15} & \textbf{97.49} & \textbf{97.47} & \textbf{97.09} & \textbf{99.52} & \textbf{99.46} & \textbf{98.18} & \textbf{98.25} & \textbf{98.01} \\
                    & SPAR          & 95.54          & 96.65          & 90.91          & 91.27          & 94.02          & 97.39          & 96.30          & 92.21          & 94.35          & 90.38          \\\hline
\multirow{5}{*}{60} & AC            & 91.30          & 91.16          & 86.67          & 87.21          & 88.23          & 92.27          & 92.32          & 88.32          & 87.85          & 90.17          \\
                    & PG            & 74.15          & 71.73          & 61.39          & 63.98          & 68.90          & 76.80          & 71.55          & 65.77          & 66.25          & 70.86          \\
                    & QL            & 42.03          & 33.88          & 22.51          & 19.95          & 27.38          & 47.13          & 41.37          & 33.10          & 31.44          & 33.40          \\
                    & S-AC (ours) & \textbf{99.16} & \textbf{99.00} & \textbf{96.50} & \textbf{97.08} & \textbf{96.70} & \textbf{99.25} & \textbf{99.20} & \textbf{96.81} & \textbf{97.52} & \textbf{97.00} \\
                    & SPAR          & 96.40          & 95.55          & 91.52          & 93.63          & 90.73          & 95.89          & 95.51          & 94.18          & 92.64          & 91.51\\
                    \bottomrule
\end{tabular}
\end{table}

\renewcommand\arraystretch{1.2}
\begin{table}[ht]
\centering
\footnotesize
\caption{Performance (\% optimality) after 5 and 10 seconds of CPU time.}
\label{table.performance_time}
\begin{tabular}{cc|ccccc|ccccc}
\toprule
    & & \multicolumn{5}{c|}{CPU time = 5s} & \multicolumn{5}{c}{CPU time = 10s} \\
    $R_\textnormal{max}$ & $|\mathcal{W}|$ & 3 & 6 & 9 & 12 & 15 & 3 & 6 & 9 & 12 & 15 \\\hline
    \multirow{5}{*}{20} & AC            & 91.97          & 93.91          & 92.29          & 93.21          & 89.98          & 94.80          & 95.93          & 94.87          & 95.32          & 94.32          \\
                    & PG            & 65.49          & 68.53          & 66.84          & 68.06          & 71.71          & 67.85          & 72.33          & 71.14          & 71.83          & 73.27          \\
                    & QL            & 32.60          & 35.91          & 31.75          & 31.20          & 37.63          & 32.60          & 35.91          & 31.75          & 31.20          & 37.63          \\
                    & S-AC (ours) & \textbf{99.79} & \textbf{99.52} & \textbf{98.71} & \textbf{98.20} & \textbf{98.03} & \textbf{99.83} & \textbf{99.57} & \textbf{99.00} & \textbf{98.48} & \textbf{98.50} \\
                    & SPAR          & 97.84          & 96.93          & 94.19          & 92.20          & 92.83          & 96.95          & 97.55          & 93.80          & 94.33          & 95.87          \\\hline
\multirow{5}{*}{30} & AC            & 89.78          & 89.61          & 88.64          & 88.31          & 88.74          & 93.20          & 91.90          & 92.62          & 91.26          & 91.77          \\
                    & PG            & 68.84          & 64.37          & 71.91          & 67.15          & 74.40          & 67.55          & 68.01          & 73.35          & 65.14          & 74.44          \\
                    & QL            & 40.35          & 37.14          & 35.43          & 33.99          & 37.78          & 40.35          & 37.14          & 35.43          & 33.99          & 37.78          \\
                    & S-AC (ours) & \textbf{99.62} & \textbf{99.39} & \textbf{98.00} & \textbf{97.37} & \textbf{97.40} & \textbf{99.80} & \textbf{99.67} & \textbf{99.01} & \textbf{98.35} & \textbf{98.35} \\
                    & SPAR          & 95.97          & 97.42          & 94.97          & 94.85          & 94.96          & 97.53          & 97.88          & 92.86          & 94.90          & 95.37          \\\hline
\multirow{5}{*}{40} & AC            & 86.28          & 89.01          & 83.92          & 86.91          & 85.08          & 92.58          & 92.27          & 89.01          & 89.08          & 88.65          \\
                    & PG            & 68.08          & 63.96          & 72.25          & 61.87          & 73.68          & 69.73          & 68.89          & 71.53          & 67.61          & 73.01          \\
                    & QL            & 42.20          & 40.57          & 35.20          & 33.44          & 37.63          & 42.20          & 40.57          & 35.20          & 33.44          & 37.63          \\
                    & S-AC (ours) & \textbf{99.43} & \textbf{98.94} & \textbf{96.85} & \textbf{96.58} & \textbf{95.63} & \textbf{99.58} & \textbf{99.41} & \textbf{98.32} & \textbf{97.94} & \textbf{97.57} \\
                    & SPAR          & 97.73          & 96.98          & 92.88          & 92.47          & 92.49          & 96.60          & 96.28          & 95.10          & 92.54          & 92.97          \\\hline
\multirow{5}{*}{50} & AC            & 86.79          & 81.48          & 79.76          & 81.90          & 80.34          & 88.10          & 88.05          & 82.45          & 84.92          & 83.29          \\
                    & PG            & 67.47          & 68.04          & 63.74          & 60.86          & 69.41          & 66.57          & 65.26          & 65.34          & 65.01          & 71.29          \\
                    & QL            & 47.60          & 42.72          & 35.22          & 31.34          & 35.60          & 47.68          & 42.72          & 35.22          & 31.90          & 35.60          \\
                    & S-AC (ours) & \textbf{98.92} & \textbf{98.51} & \textbf{95.10} & \textbf{94.60} & \textbf{93.92} & \textbf{99.36} & \textbf{99.06} & \textbf{97.01} & \textbf{96.83} & \textbf{96.33} \\
                    & SPAR          & 96.85          & 96.62          & 93.17          & 88.63          & 91.35          & 95.02          & 96.27          & 94.17          & 92.61          & 92.27          \\\hline
\multirow{5}{*}{60} & AC            & 77.05          & 76.73          & 73.10          & 74.15          & 76.31          & 78.78          & 80.14          & 76.33          & 76.38          & 78.48          \\
                    & PG            & 66.10          & 58.84          & 56.40          & 59.41          & 64.81          & 69.42          & 59.67          & 57.36          & 60.11          & 65.73          \\
                    & QL            & 44.75          & 38.60          & 26.78          & 26.41          & 31.62          & 47.13          & 41.37          & 33.10          & 31.44          & 33.40          \\
                    & S-AC (ours) & \textbf{98.65} & \textbf{98.02} & \textbf{93.17} & \textbf{93.20} & 92.31          & \textbf{99.05} & \textbf{98.75} & \textbf{95.41} & \textbf{96.16} & \textbf{95.11} \\
                    & SPAR          & 95.45          & 95.26          & 90.50          & 92.43          & \textbf{92.41} & 95.90          & 96.15          & 93.45          & 93.11          & 93.02 \\
                    \bottomrule
\end{tabular}
\end{table}

\subsection{Optimality Gap of Approximate Policies}
To estimate the value $V^{\tilde{\pi}^k}_0$ of an approximate policy $\tilde{\pi}^k$, {we averaged the value of initial states $(r,w)$ drawn from a uniform distribution}, where the value $V^{\tilde{\pi}^k}_0(r,w)$ is obtained from 100 Monte Carlo simulations following policy $\tilde{\pi}^k$. To evaluate the approximate policy learned from an ADP algorithm, we {run 10 independent replications of the algorithm} and average the performance of the learned approximate policy in each replication. Denote $\bar{V}^{\tilde{\pi}^k}_0$ the evaluation of the approximate policy learned from an algorithm. The percentage of optimality is the ratio of $\bar{V}^{\tilde{\pi}^k}_0$ to $V_0$, where the optimal value function $V_0$ is computed using BDP. 

Tables~\ref{table.performance_iter} and \ref{table.performance_time} show the percentage of optimality of each algorithm at specific iterations and CPU times, across all problem instances. {In almost all instances and comparison points, S-AC outperforms the baseline algorithms. AC is the most competitive baseline with respect to the number of iterations and SPAR is the most competitive when CPU time is of primary interest.} Within the same number of iterations and CPU times, the performance of all the ADP algorithms becomes worse as the size of the problem increases; however, S-AC seems to be less sensitive than the others to problem size. Let us compare the percentage of optimality of the instance with $R_\textnormal{max}=20$ and $|\mathcal{W}|=3$ and the instance with $R_\textnormal{max}=60$ and $|\mathcal{W}|=15$ at iteration 1,000. The performance of AC, S-AC, and SPAR on the latter large instance is respectively 7.2, 2.8, and 6.5 percentage worse than the performance on the smaller instance. For the same instance at CPU time 10 seconds, the performance of the three algorithms on the larger instance is respectively 14.7, 4.7, and 4.9 percentage points worse than the performance on the smaller instance.

To further illustrate the performance of each ADP algorithm, we show the convergence curves of three instances with different sizes. Let us consider three problem instances: (1) $R_\textnormal{max}=20$, $|\mathcal{W}|=3$, (2) $R_\textnormal{max}=40$, $|\mathcal{W}|=9$, and (3) $R_\textnormal{max}=60$, $|\mathcal{W}|=15$. Figure~\ref{fig.numerical.iter} shows the rate of convergence of the ADP algorithms considered in this paper as a function of the number of iterations, while Figure~\ref{fig.numerical.time} shows the rate of convergence as a function of the computation time. {We plot ``log regret'' (log of the suboptimality from 100\%) to help improve the visualization.}

The policy approximations used in AC and PG are parameterized as stochastic policies initialized to take uniformly random actions in each state. This exploration helps to generate relatively high value in early iterations. AC and PG are very competitive with our S-AC algorithm when comparing performance with respect to the iteration count. However, this comes at a computational cost: although stochasticity encourages exploration, Figure~\ref{fig.numerical.time} shows that each iteration is particularly time-consuming when compared to deterministic policies. 

\begin{figure}[h]
	\centering
	\begin{subfigure}[ht]{0.29\textwidth}
		\includegraphics[width=1\textwidth]{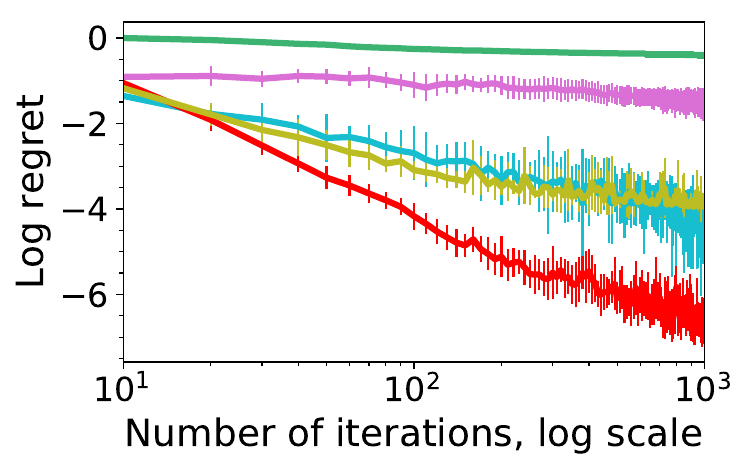}
		\caption{$R_{\textnormal{max}}=20, |\mathcal{W}|=3$}
		\label{fig.numerical.iter.w3r20}
	\end{subfigure}
	\begin{subfigure}[ht]{0.29\textwidth}
		\includegraphics[width=1\textwidth]{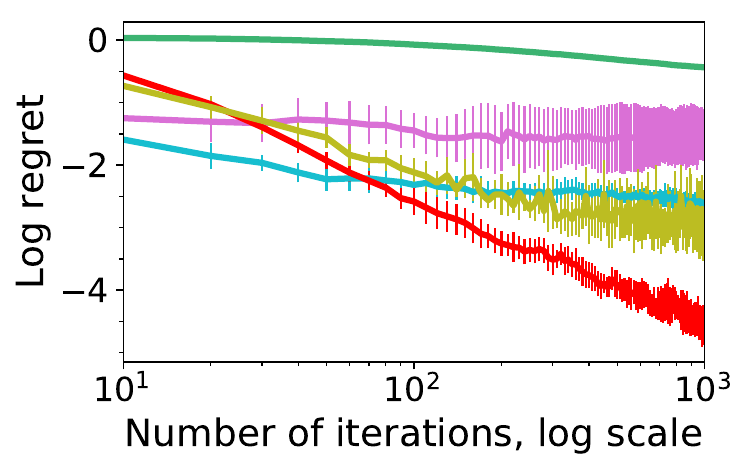}
		\caption{$R_{\textnormal{max}}=40, |\mathcal{W}|=9$}
		\label{fig.numerical.iter.w9r40}
	\end{subfigure}
	\begin{subfigure}[ht]{0.4\textwidth}
		\includegraphics[width=1\textwidth]{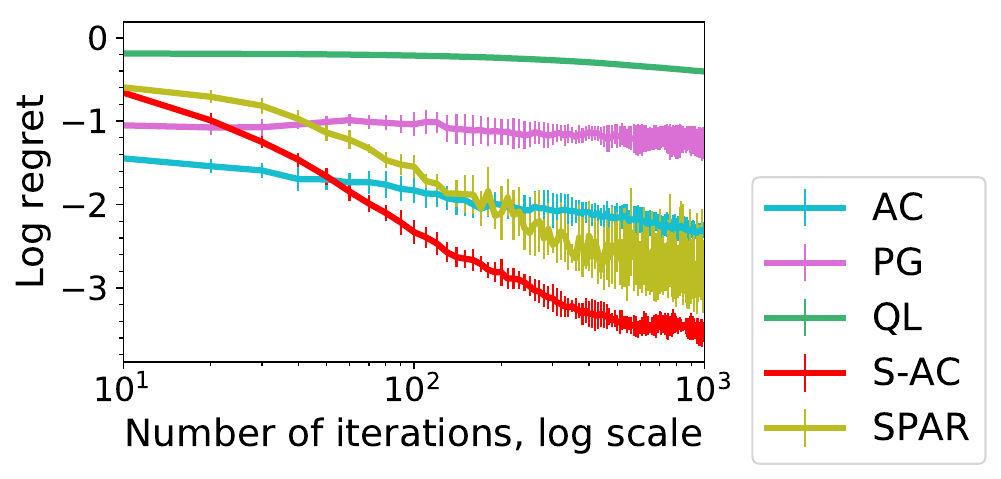}
		\caption{$R_{\textnormal{max}}=60, |\mathcal{W}|=15$}
		\label{fig.numerical.iter.w15r60}
	\end{subfigure}
	\caption{Comparison of ADP algorithms with respect to iteration number.}
	\label{fig.numerical.iter}
\end{figure}

\begin{figure}[h]
	\begin{subfigure}[ht]{0.29\textwidth}
		\includegraphics[width=1\textwidth]{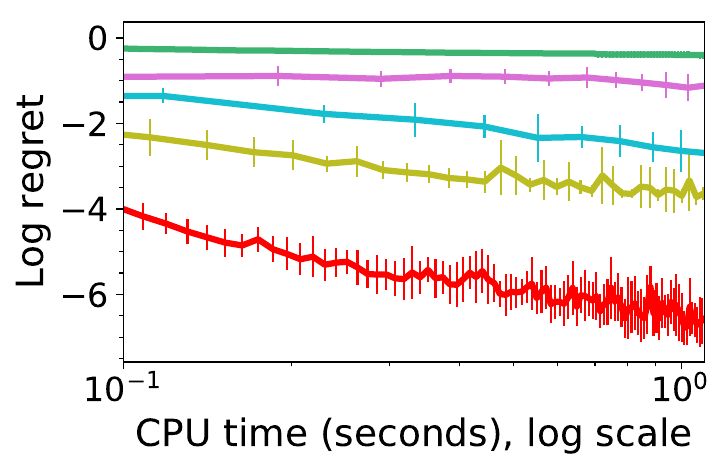}
		\caption{$R_{\textnormal{max}}=20, |\mathcal{W}|=3$}
		\label{fig.numerical.iter.w3r20}
	\end{subfigure}
	\begin{subfigure}[ht]{0.29\textwidth}
		\includegraphics[width=1\textwidth]{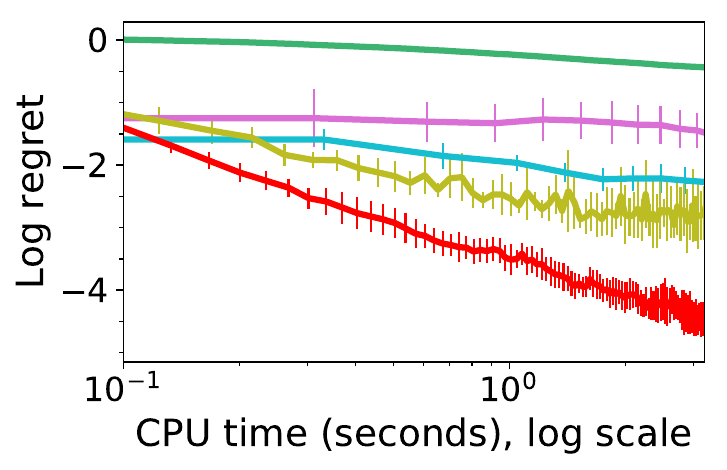}
		\caption{$R_{\textnormal{max}}=40, |\mathcal{W}|=9$}
		\label{fig.numerical.iter.w9r40}
	\end{subfigure}
	\begin{subfigure}[ht]{0.4\textwidth}
		\includegraphics[width=1\textwidth]{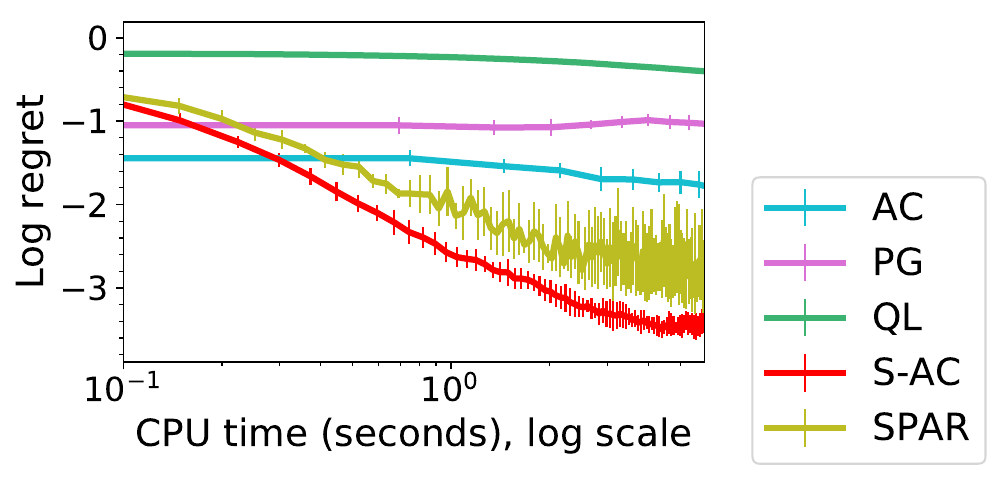}
		\caption{$R_{\textnormal{max}}=60, |\mathcal{W}|=15$}
		\label{fig.numerical.iter.w15r60}
	\end{subfigure}
	\caption{Comparison of ADP algorithms with respect to CPU time.}
	\label{fig.numerical.time}
\end{figure}

\begin{figure}[h]
	\centering
	\begin{subfigure}[ht]{0.28\textwidth}
		\includegraphics[width=1\textwidth]{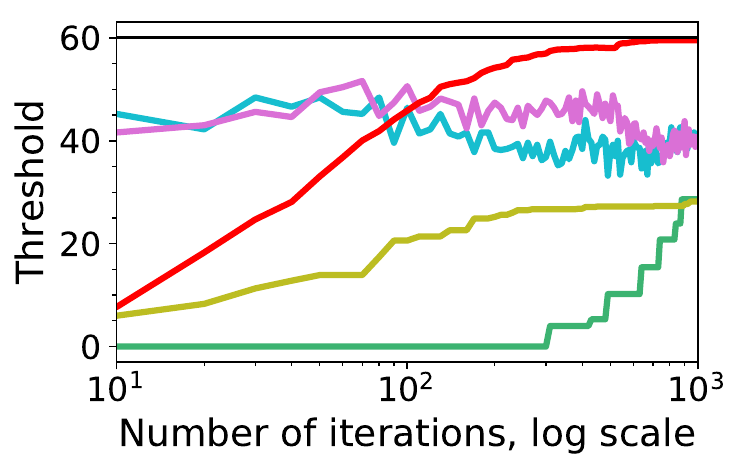}
		\caption{$w_0=1$}
		\label{fig.numerical.bs.w9r60.1}
	\end{subfigure}\;\;\;
	\begin{subfigure}[ht]{0.28\textwidth}
		\includegraphics[width=1\textwidth]{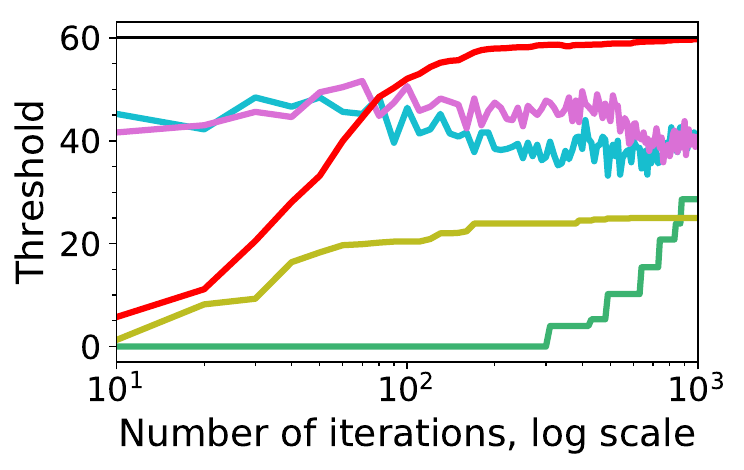}
		\caption{$w_0=4$}
		\label{fig.numerical.bs.w9r60.2}
	\end{subfigure}\;\;\;
	\begin{subfigure}[ht]{0.38\textwidth}
		\includegraphics[width=1\textwidth]{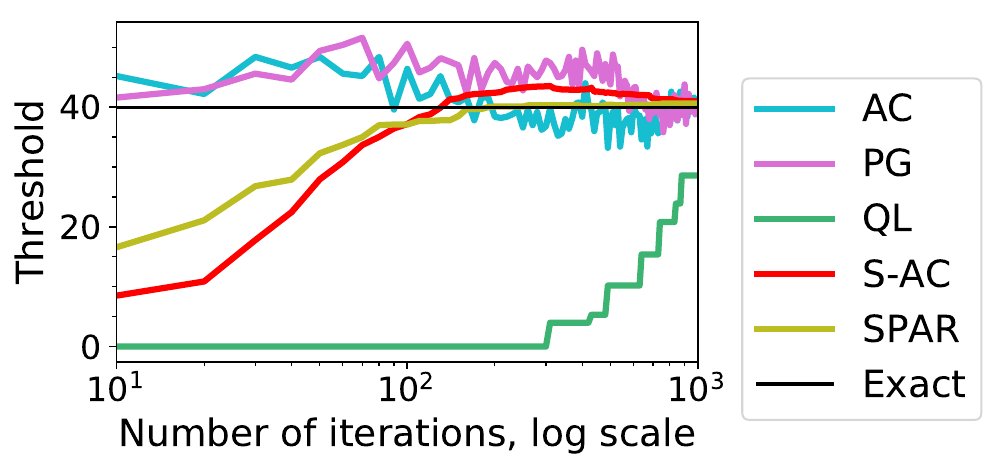}
		\caption{$w_0=8$}
		\label{fig.numerical.bs.w9r60.3}
	\end{subfigure}
	\caption{Convergence of replenish-up-to thresholds at $t=0$ for the $R_{\textnormal{max}} = 60, |\mathcal{W}| = 9$ instance.}
	\label{fig.numerical.bs.w9r60}
\end{figure}

\begin{figure}[h]
	\centering
	\begin{subfigure}[ht]{0.28\textwidth}
		\includegraphics[width=1\textwidth]{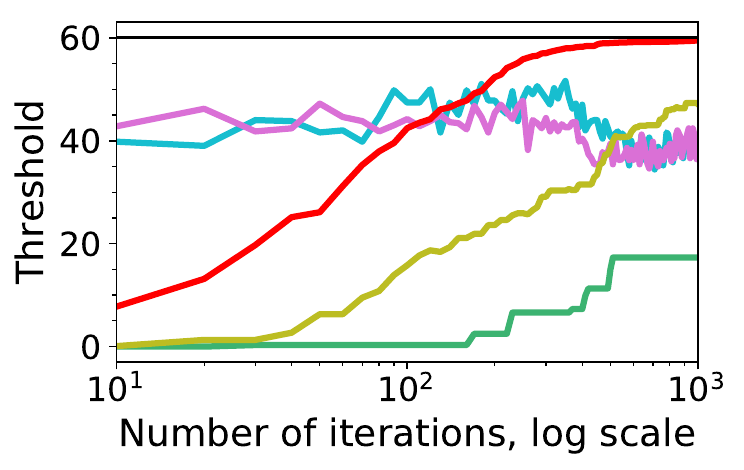}
		\caption{$w_0=2$}
		\label{fig.numerical.bs.w12r60.1}
	\end{subfigure}\;\;\;
	\begin{subfigure}[ht]{0.28\textwidth}
		\includegraphics[width=1\textwidth]{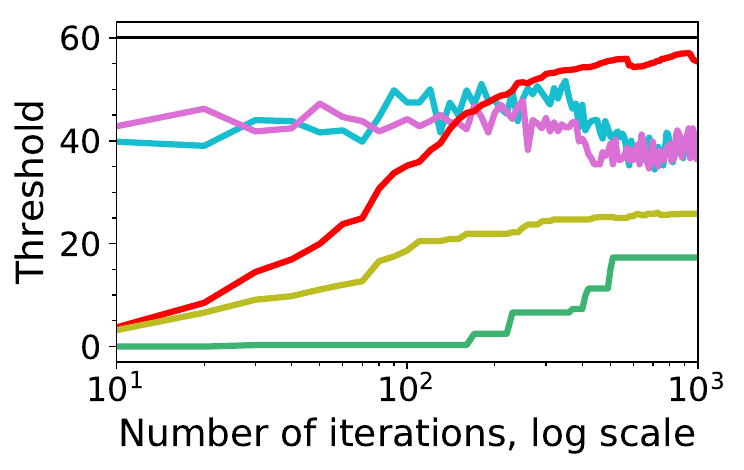}
		\caption{$w_0=6$}
		\label{fig.numerical.bs.w12r60.2}
	\end{subfigure}\;\;\;
	\begin{subfigure}[ht]{0.38\textwidth}
		\includegraphics[width=1\textwidth]{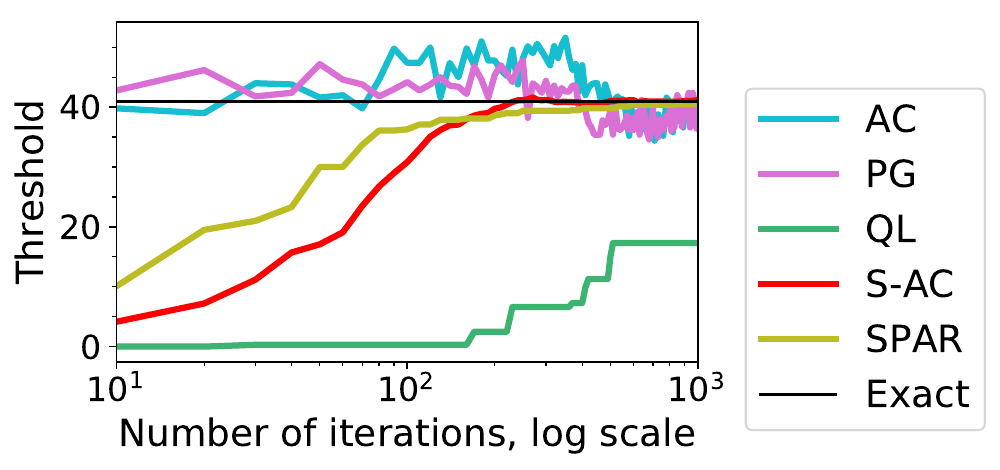}
		\caption{$w_0=10$}
		\label{fig.numerical.bs.w12r60.3}
	\end{subfigure}
	\caption{Convergence of replenish-up-to thresholds at $t=0$ for the $R_{\textnormal{max}} = 60, |\mathcal{W}| = 12$ instance.}
	\label{fig.numerical.bs.w12r60}
\end{figure}

\begin{figure}[h]
	\centering
	\begin{subfigure}[ht]{0.28\textwidth}
		\includegraphics[width=1\textwidth]{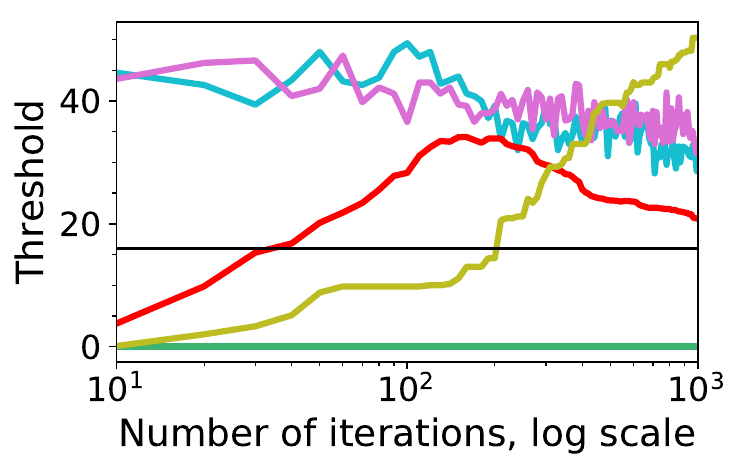}
		\caption{$w_0=2$}
		\label{fig.numerical.bs.w15r60.2}
	\end{subfigure}\;\;\;
	\begin{subfigure}[ht]{0.28\textwidth}
		\includegraphics[width=1\textwidth]{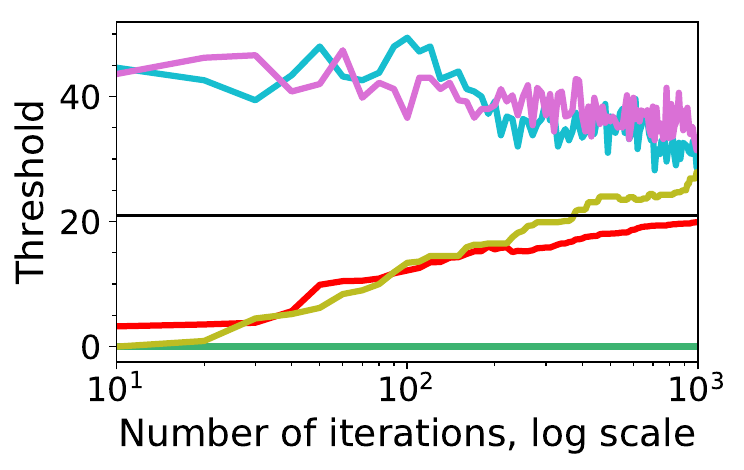}
		\caption{$w_0=7$}
		\label{fig.numerical.bs.w15r60.7}
	\end{subfigure}\;\;\;
	\begin{subfigure}[ht]{0.38\textwidth}
		\includegraphics[width=1\textwidth]{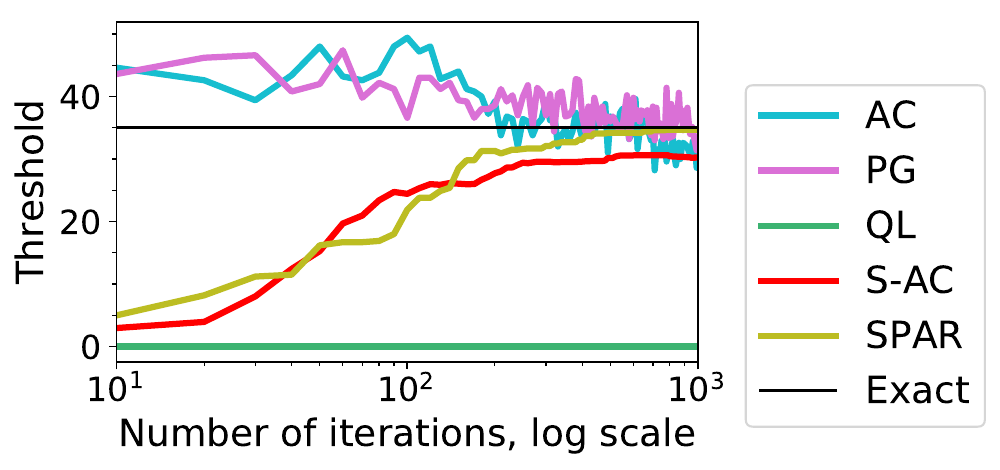}
		\caption{$w_0=12$}
		\label{fig.numerical.bs.w15r60.12}
	\end{subfigure}
	\caption{Convergence of replenish-up-to thresholds at $t=0$ for the $R_{\textnormal{max}} = 60, |\mathcal{W}| = 15$ instance.}
	\label{fig.numerical.bs.w15r60}
\end{figure}

\subsection{Convergence of Implied Basestock Thresholds}
Next, we are interested in examining how the implied replenish-up-to thresholds evolve as each algorithm progresses. The thresholds of AC and PG are selected as the actions with highest probabilities for state $r=0$ and the thresholds of SPAR and QL correspond to the greedy policy with respect to the value function and state-action value function approximations. In this part, we take three problem instances as examples, whose storage capacities are all $R_\textnormal{max}=60$, and exogenous information spaces are $|\mathcal{W}|=9$, $|\mathcal{W}|=12$ and $|\mathcal{W}|=15$ respectively. Figures~\ref{fig.numerical.bs.w9r60} to \ref{fig.numerical.bs.w15r60} show the convergence of approximate replenish-up-to threshold levels $\bar{l}^{\textnormal{rep},k}$ as well as the optimal levels $l^{\textnormal{rep}}$ (denoted ``Exact'' in the plots) for three different exogenous information states $w_0$ at period $t=0$ for the selected problem instances. 

We see that the thresholds generated by S-AC quickly converge to the optimal ones in all instances. Due to the smoothing step of S-AC, the convergence is also observed to be relatively stable. On the other hand, the thresholds of AC, PG, QL, and SPAR tend to either have large gaps to the optimal thresholds or converge in a noisy manner. Stability of the basestock thresholds is particularly useful if S-AC is to be used in an online manner in practice, where drastic changes in the policy from one time period to the next (as observed in the competing algorithms) would be impractical. These results attest to the value of utilizing the structural properties of the policy and value function.

\subsection{Sensitivity Analysis}
\looseness-1 In this section, we study the impact of model parameters. We take the instance with $R_{\textnormal{max}}=50$ and $|\mathcal{W}|=9$ in Section~\ref{sec:benchmark_instances} as the base instance, and vary parameters in the model to evaluate the impact of each parameter. The results are summarized in Table~\ref{table:sensitivity}. Each value in the table is an average of ten replications. For each replication, we take the policy learned by the algorithm at iteration 1,000 and evaluate it by averaging 100 simulations. The first parameter we are interested in is the demand distribution. We consider two types of distribution, normal and uniform distributions. For each type of distribution, we consider two values of the average demand of all PODs in a period, 30 and 50. The table shows that the value is highly influenced by the expected demand, and that with the same expected demand, the type of distribution has relatively little impact on the performance. We are also interested in the impact of the costs in the model. The ordering cost has a much larger impact than the holding cost, and any increase in the ordering cost can significantly reduce the value of the policy. We also note that S-AC finds near-optimal policies in each of these cases.

\renewcommand\arraystretch{1.2}
\begin{table}[ht]
\centering
\footnotesize
\caption{Impact of parameters on ADP algorithms for the $R_{\textnormal{max}} = 50, |\mathcal{W}| = 9$ instance.}
\label{table:sensitivity}
\begin{tabular}{cc|cccccc}
\toprule
Parameter & Value & AC & PG & QL & S-AC & SPAR & Exact \\\hline
\multirow{3}{*}{Mean total demand} & 30, Normal & 19,037 & 16,009 & 7,287  & \textbf{20,313} & 19,077 & 21,332 \\
& 30, Uniform & 18,113 & 15,142 & 8,476  & \textbf{20,865} & 20,098 & 21,332 \\
& 50, Normal & 28,422 & 23,237 & 10,318 & \textbf{29,080} & 28,278 & 29,387 \\
& 50, Uniform & 28,023 & 23,112 & 10,286 & \textbf{29,077} & 28,150 & 29,387 \\\hline
\multirow{3}{*}{Mean ordering cost} & 30 & 30,914 & 25,488 & 15,125 & \textbf{33,532} & 32,671 & 34,647 \\
& 50 & 18,037 & 14,009 & 7,287  & \textbf{20,313} & 19,077 & 20,689 \\
& 70 & 11,257 & 8,660  & 6,032  & \textbf{11,866} & 11,553 & 11,984 \\\hline
\multirow{5}{*}{Holding cost} & 5 & 18,037 & 14,009 & 7,287  & \textbf{20,313} & 19,077 & 20,689 \\
& 20 & 18,402 & 15,064 & 7,189  & \textbf{19,839} & 19,285 & 20,131 \\
& 35 & 17,807 & 14,498 & 5,855  & \textbf{19,381} & 18,784 & 19,592 \\
& 50 & 17,150 & 15,011 & 4,582  & \textbf{18,988} & 18,418 & 19,203 \\
& 65 & 16,575 & 13,708 & 2,954  & \textbf{18,597} & 17,931 & 18,835 \\
\bottomrule
\end{tabular}
\end{table}

\section{Case Study: Naloxone for First Responders in Pennsylvania} \label{sec:case_study}

Our case study is motivated by the need to distribute naloxone (a drug that can reverse overdoses within seconds to minutes) amidst the ongoing opioid overdose crisis, which is affecting communities across the state of Pennsylvania. Our case study makes use a time-series demand model for naloxone, fit using publicly available data from \citet{overdose2021}.
Our model in this section contains a five-dimensional information state $W_t$, which makes the standard version of S-AC intractable. Instead, we leverage an aggregation-based version of S-AC, whose details are introduced in Appendix~\ref{sec:aggregation}. In essence, the method uses clusters of the exogenous information state (via $k$-means clustering) and learns a cluster-dependent policy. When implementing the policy, we use regression to interpolate between clusters. Our experimental results show that this simple extension of S-AC for the case of a continuous and multi-dimensional information state is surprisingly effective.

\subsection{Description of Naloxone for First Responders in Pennsylvania}

The rate of opioid overdose deaths has quadrupled since 1999 \citepalias{cdc2021understanding}, with heroin deaths alone outpacing gun homicides in 2015 \citep{ingraham2016heroin}. Moreover, in 2015, drug overdose deaths in U.S. exceeded the combined mortalities from car accidents and firearms \citep{DEA2015NDTA,DEA2015NDTAannouncement}. By August 2020, the number of deaths from synthetic opioids was 52\% more than the previous year \citep{opioid2021}. There is significant benefit for drug users, family members, community members, law enforcement officers, and medical professionals alike to have training and access to the overdose reversal drug naloxone for use in risky situations (see Pennsylvania’s Act 139).

\begin{table}[ht]
	\setlength\extrarowheight{2pt}
	\centering
	\caption{Parameters used in the NFRP case study.}
	\label{table. naloxone parameters}
	\footnotesize
	\begin{tabular}{p{0.9in} >{\arraybackslash}p{0.5in} p{4.5in}}
		\toprule
		\multicolumn{1}{c}{Parameter} & Value & \multicolumn{1}{c}{Meaning/Explanation} \\
		\hline 
		WTP/unit & \$31,000 & Willingness to pay (WTP) for a unit of naloxone. Product of the next 2 entries. \\ 
		WTP/QALY & \$50,000 & WTP per quality-adjusted life-year (QALY) \citep{kaplan1982health, coffin2013cost}. \\ \addlinespace[3pt]
		QALY/unit & 0.62 & \looseness-1 QALY adjustment factor for lives saved by naloxone. The average of utilities of ``High-risk/low-risk prescription opioid use'' and ``Illicit opioid use'' in \citet{acharya2020cost}. \\
		Ordering cost & \$185.30 & Approximate retail price of an auto-injector form of naloxone \citep{naloxoneprice}. \\
		Treatment cost & \$2,976 & The cost for EMS visit, EMS transport to hospital, and emergency department care \citep{coffin2013cost}.\\
		$R_\textnormal{max}$ & 700 & Capacity of the central storage. \\ 
		$h$ & \$10 & Holding cost.\\
		$b$ & 0 & Disposal cost.\\
		\bottomrule
	\end{tabular}
\end{table}

\looseness-1 In this case study, we consider a somewhat simplified setting of a public health organization modeled after Naloxone for First Responders Program (NFRP), which distributes naloxone through a Centralized Coordination Entity (CCE). We use the top five counties in terms of overdose incidents responded to by emergency medical services (EMS) from publicly available data \citep{overdose2021}, Allegheny County, York County, Bucks County, Dauphin County, and Luzerne County (all of which have incident numbers over 1,000), as the five PODs (first responders) in our case study. 

The parameters of our utility function are based on values found in \citet{kaplan1982health}, \citet{coffin2013cost}, \citet{acharya2020cost}, and \citet{overdose2021}. Since the naloxone dispensed to first responders is used to reverse overdoses, we use willingness to pay (WTP) per unit of naloxone to measure the utility per demand satisfied. Specifically, let the WTP per unit of naloxone minus the treatment cost (EMS visit and related costs) be the unit utility $\Delta u$, similar to the approach taken by \citet{coffin2013cost}. To reflect the different expected demand among counties, we adopt the following expected utility function in the case study: $u_w(y_i,\xi_i) = \Delta u \, \E_w\bigl[\min(y_i,D_i)\bigr]$, where $D_i$ is the demand of POD $i$.\footnote{{The demand is computed as follows: based on data from \citep{overdose2021}, 1-9 doses of naloxone are administrated to reverse an overdose. The demand of POD $i$ at period $t$ equals to a sample of the doses of naloxone needed to reverse $w^i$ incidents, where $w^i$ is the $i$-th element of $w$.}} Further details (ordering cost, capacity of storage, and holding cost) are available in Table~\ref{table. naloxone parameters}.

\begin{figure}[ht]
	\centering
	\begin{tikzpicture}
	\footnotesize
    \tikzset{edge from parent/.style={draw,edge from parent path={(\tikzparentnode.south)-- +(0,-6pt)-| (\tikzchildnode)}}}
    \Tree [.{Inventory Control Center}
    [.{Centralized Coordinating Entity (CCE)}
    [.{Allegheny County} ]
    [.{York County} ]
    [.{Bucks County} ]
    [.{Dauphine County} ]
    [.{Luzerne County} ]
    [.{$\cdots$} ] ] ]
    \end{tikzpicture}
	\caption{The hierarchical system structure used in the case study.}
	\label{fig:case_system}
\end{figure}

\looseness-1 {The system consists of an inventory control center, a dispensing coordinator, and multiple first responders as shown in Figure~\ref{fig:case_system}. Let the time horizon for the case study be $T=12$ months. 
At each period $t$, the inventory control center replenishes the inventory of naloxone after observing the recent incident history, modeled as the county-level incident count of the last period (thus, $W_t \in \mathbb R^5$). The control center then decides the total amount of naloxone to dispense in the current period. This naloxone is delivered to the CCE, who makes lower-level quantity-of-dispensing decisions based on the attribute of the arriving POD $\xi_{i}$, the current available naloxone in stock (the inventory level $x_i$), and the upper-level county-level incident count of the last period $W_t$.}
In the case study, the exogenous information is the incident history, which consists of the number of incidents from the five counties last month. We a vector autoregression (VAR) time-series model with a lag of 1.

Figure~\ref{fig:case_time_series} shows the monthly number of overdose incidents in the five counties from January 1st, 2018 to July 31st, 2020, and 20 sample paths from the VAR(1) model for the next 24 months. The first planning period of the case study is July 2020.
To generate the state aggregation, we sample 10,000 paths of the exogenous information, and use $k$-means clustering to cluster them into 12 clusters. Figure~\ref{fig:case_clustered_w} shows the first three dimensions of the resulting clustering that is then used by S-AC.

\begin{figure}[ht]
	\centering
	\begin{subfigure}[ht]{0.4\textwidth}
		\includegraphics[width=0.95\textwidth]{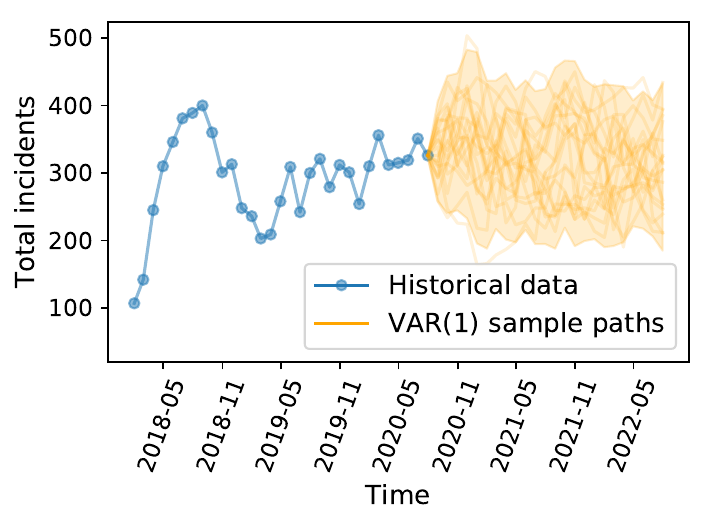}
		\caption{Total number of incidents and predictions.}
		\label{fig:case_time_series}
	\end{subfigure}
	\begin{subfigure}[ht]{0.46\textwidth}
	    \vspace{-8pt}
		\includegraphics[width=0.95\textwidth]{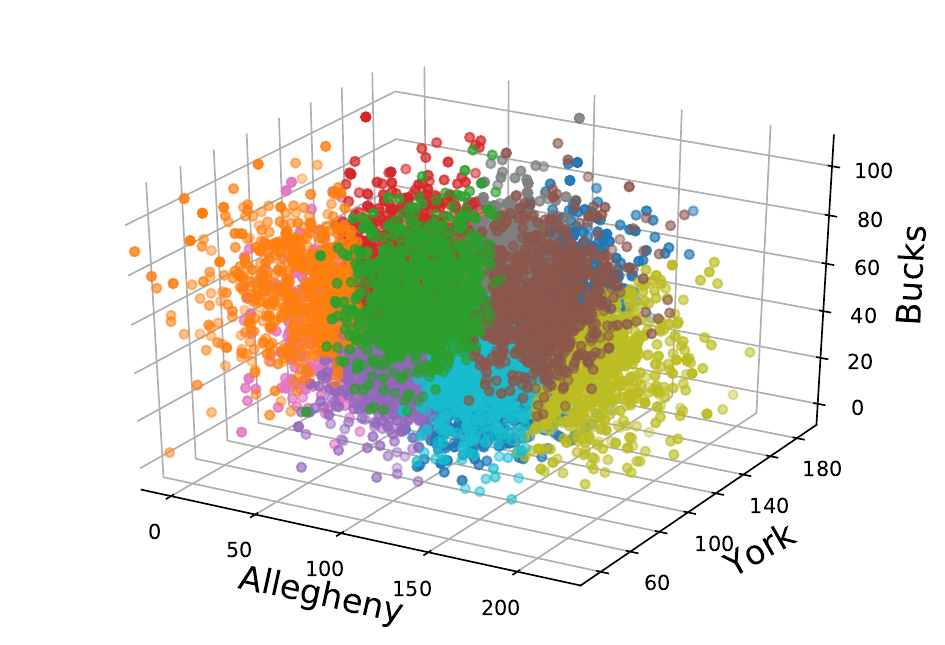}
		\caption{Three dimensions of $k$-means clustering.}
		\label{fig:case_clustered_w}
	\end{subfigure}
	\caption{Total overdose incidents of the five PODs and $k$-means visualization.}
	\label{fig:case.intro}
\end{figure}

\subsection{Performance of the Algorithm}
We denote the aggregate version of our algorithm \texttt{S-AC+DPR}, whose upper-level policies are learned by aggregate S-AC (see Appendix~\ref{sec:aggregation}). The learned cluster-dependent upper-level policies are then interpolated between clusters using Gaussian process regression. The lower-level policies are solved using a discretized DP we then interpolate using linear regression (\texttt{DPR}). In this section, we first study the performance of \texttt{S-AC+DPR} compared with \texttt{AC+DPR} and a suite of heuristic strategies. Next, we illustrate the impact of the various approaches on the lower-level dispensing decisions to each POD, showing some stark differences between the methods. Finally, we show some sensitivity analysis of the cost parameters on the value of the learned policies and heuristics.

\subsubsection{Convergence and Comparison with Heuristics}
\looseness-1 We first describe the heuristic strategies to which we compare our new policy. We make a distinction between the upper-level and lower-level policies and consider two approaches for the upper-level and three approaches for the lower-level, resulting in six combined strategies. On the upper-level, we either take the S-AC policies (\texttt{S-AC}) or always replenish-up-to the expected demand\footnote{The expected demand for a given exogenous information $w$ equals to the sum of the elements of $w$ times the average doses per reverse (1.517), which is computed by averaging the ``dose count'' in the dataset \citet{overdose2021} (excluding the cases without applying naloxone).} and dispense-down-to zero (\texttt{Mean}). On the lower-level, the three strategies are: (1) take the policy trained using dynamic programming and interpolated to the continuous state space by linear regression (\texttt{DPR}), (2) evenly dispense naloxone to the five PODs (\texttt{Even}), and (3) follow the first-come-first-serve rule (\texttt{FCFS}), in which we dispense the expected demand of each POD upon its arrival until all the available resources are dispensed.
We also apply \texttt{AC+DPR} as an alternative ADP method to which we can compare \texttt{S-AC+DPR}. We selected \texttt{AC} because it performs relatively well in Section~\ref{sec:numerical} and is scalable to high-dimensional problems (unlike \texttt{QL} or \texttt{SPAR}).

\begin{figure}[ht]
	\centering
	\includegraphics[width=0.7\textwidth]{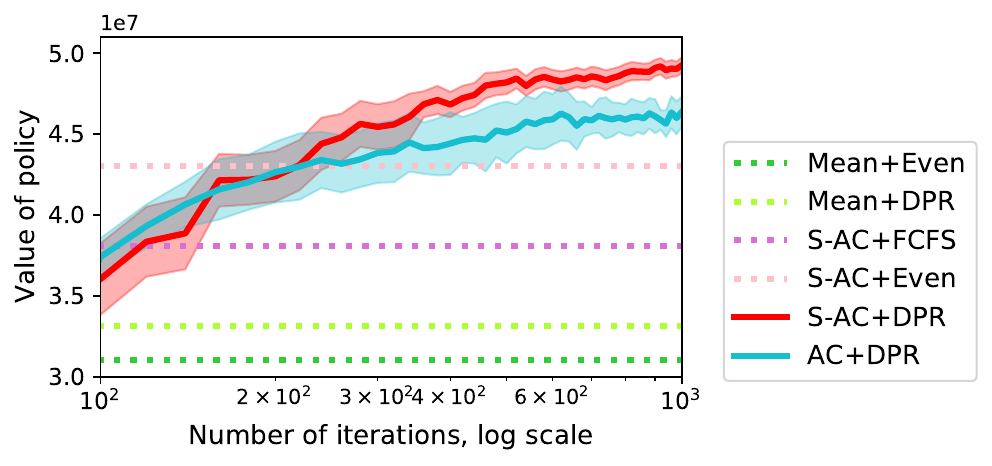}
	\caption{Convergence curve of S-AC and AC compared to performance of heuristics.}
	\label{fig.case.conv}
\end{figure}

\looseness-1 Figure~\ref{fig.case.conv} shows the cumulative performance of the policies over a year, averaged over 100 simulations (the value of policy \texttt{Mean+FCFS} is smaller than $3\mathrm{e}{7}$ and is removed from the plot to better show the results). 
We see that compared with the upper-level heuristic \texttt{Mean}, applying \texttt{S-AC} on the upper-level improves the performance (i.e., compare \texttt{S-AC+DPR} with \texttt{Mean+DPR} and \texttt{S-AC+Even} with \texttt{Mean+Even}). This is due to the ability of the state-dependent basestocks to adapt to dynamic state information.
On the lower-level, we see that \texttt{DPR} outperforms the heuristics \texttt{FCFS} and \texttt{Even} (i.e., compare \texttt{S-AC+DPR} with \texttt{S-AC+FCFS} and \texttt{S-AC+Even}, and \texttt{Mean+DPR} with \texttt{Mean+Even}) significantly. The reason is that the heuristics \texttt{FCFS} dispensing policy is unable to take advantage of the large initial gains in dispensing resources to all of the first responders, and the heuristic \texttt{Even} dispensing policy is unable to adjust the dispensing decision to a POD based on exogenous information.
\begin{figure}[ht]
	\centering
	\includegraphics[width=0.7\textwidth]{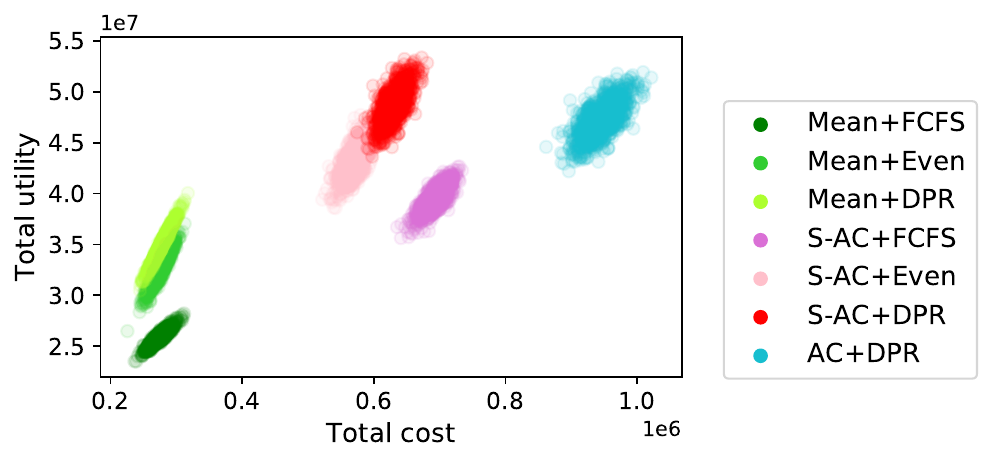}
	\caption{The relationship between total cost and total utility for each method.}
	\label{fig:case.pareto}
\end{figure}

Figure~\ref{fig:case.pareto} shows the total cost vs. total utility for each method that we tested, which helps to illustrate the trade-offs associated with each. The total cost is mostly determined by the upper-level policy (i.e., the scatters of \texttt{Mean+FCFS}, \texttt{Mean+Even} and \texttt{Mean+DPR} are close on the x-axis, and the scatters of \texttt{S-AC+FCFS}, \texttt{S-AC+Even} and \texttt{S-AC+DPR} are close on the x-axis). The upper-level policy \texttt{AC} tends to always replenish the inventory up to a high level, which leads to the highest total cost. The heuristics \texttt{Mean} considers the exogenous information by always replenishing up to the expected demand and dispense all the inventory to the PODs; this approach leads to the lowest total cost. 
The upper-level policy learned by \texttt{S-AC} is able to adapt to the exogenous information state and usually replenishes up to a level that is higher than the expected demand. It also sometimes retains a small portion of the inventory to the next period. 
With the same upper-level policy, although the total cost is similar, the total utility differs when applying different lower-level policy. This observation suggests that by applying a smarter lower-level policy \texttt{DPR}, it is possible to achieve more utility without spending much more cost. Overall, we see that our primary approach \texttt{S-AC+DPR} \emph{attains the highest levels of utility while expending relatively moderate cost}.

\subsubsection{Utilities of Different First Responders}

We now investigate the individual POD (or first responder) utilities achieved under each algorithm. Following the policy obtained after $1,000$ iterations of \texttt{S-AC+DPR}, we get the total utility of each POD during the entire planning horizon. Under our utility function definition and the parameters given in Table \ref{table. naloxone parameters}, the PODs with higher levels of overdose incidents are associated with a higher utilities than PODs fewer incidents. Thus, we expect that a good inventory and dispensing policy will learn to prioritize these high-utility PODs. Figure~\ref{fig:case.hist_sac} shows the histograms of 1,000 simulations for the utilities of the five PODs alongside the historical county-level overdose incidents.

\begin{figure}[ht]
	\centering
	\begin{subfigure}[ht]{0.4\textwidth}
		\includegraphics[width=1\textwidth]{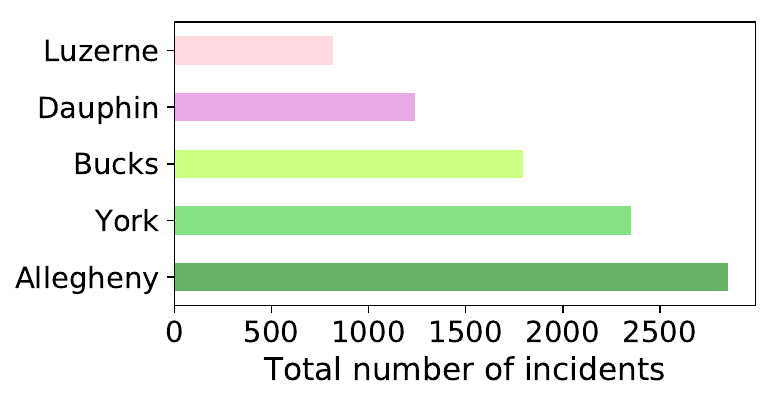}
		\caption{County-level overdose incidents.}
	\end{subfigure}\;\;\;\;
	\begin{subfigure}[ht]{0.5\textwidth}
		\includegraphics[width=1\textwidth]{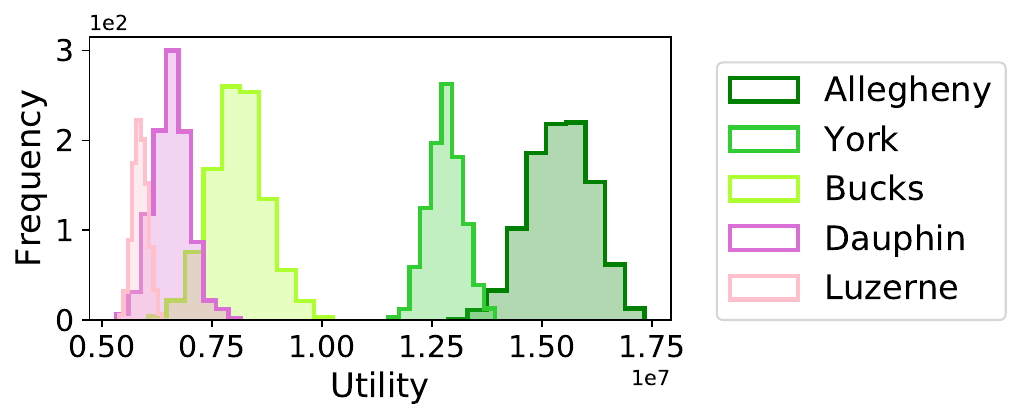}
		\caption{Cumulative utilities achieved by \texttt{S-AC+DPR}.}
	\end{subfigure}
	\caption{Historical overdose incidents by county and the prioritization learned by \texttt{S-AC+DPR}}
	\label{fig:case.hist_sac}
\end{figure}

To investigate how each method prioritizes the different PODs, we show the utilities of each POD generated by each policy in Figure~\ref{fig:case.hist_all}. \texttt{S-AC+DPR} leads to the highest utilities of the first three counties, while \texttt{S-AC+Even} leads to the highest utility of the last county. These two policies perform similarly for the fourth county. Moreover, the two ADP algorithms \texttt{S-AC+DPR} and \texttt{AC+DPR} are in the top three policies for all of the counties' utilities.

\begin{figure}[ht]
	\centering
	\begin{subfigure}[ht]{0.3\textwidth}
		\includegraphics[width=1\textwidth]{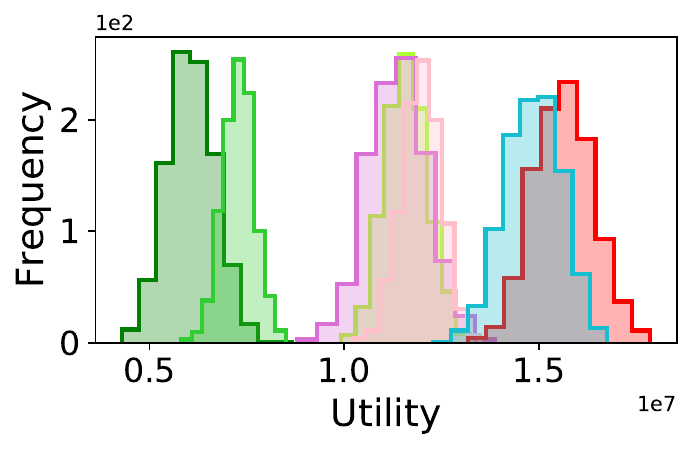}
		\caption{Allegheny}
	\end{subfigure}
	\begin{subfigure}[ht]{0.3\textwidth}
		\includegraphics[width=1\textwidth]{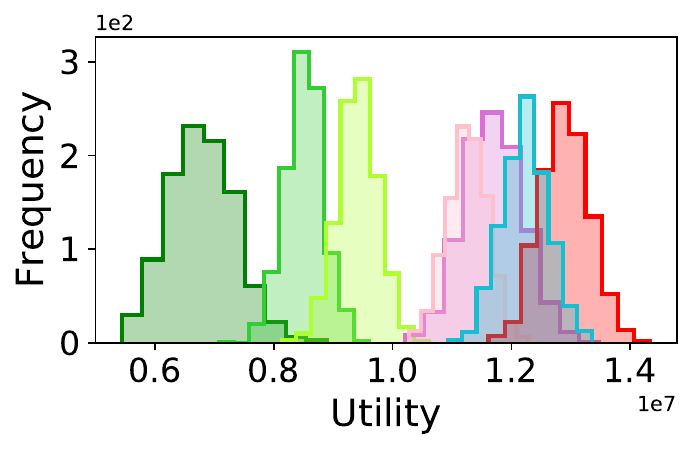}
		\caption{York}
	\end{subfigure}
	\begin{subfigure}[ht]{0.3\textwidth}
		\includegraphics[width=1\textwidth]{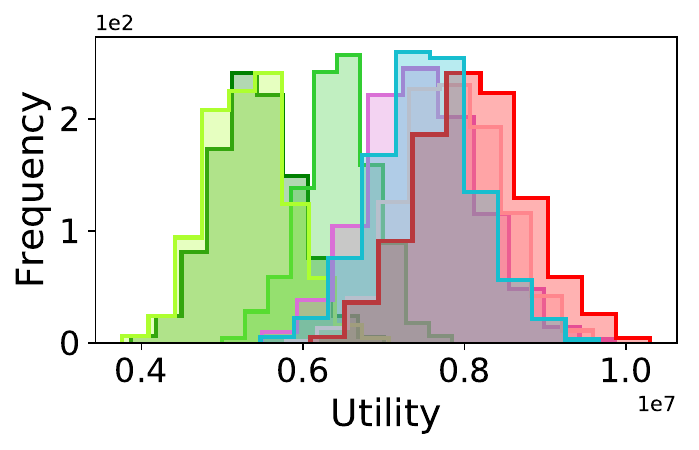}
		\caption{Bucks}
	\end{subfigure}\\
	\begin{subfigure}[ht]{0.3\textwidth}
		\includegraphics[width=1\textwidth]{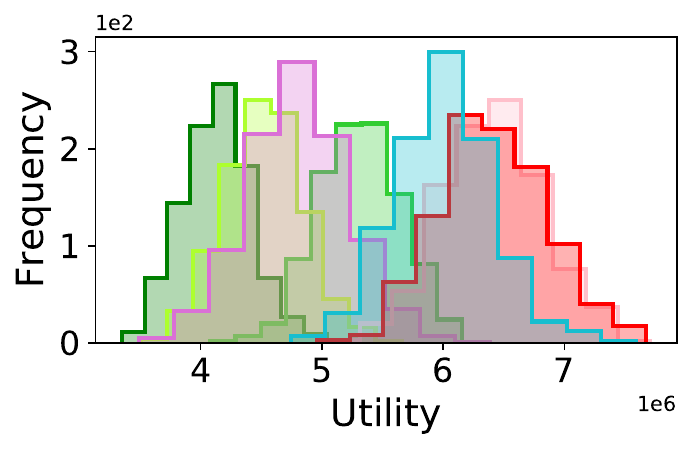}
		\caption{Dauphin}
	\end{subfigure}
	\begin{subfigure}[ht]{0.49\textwidth}
		\includegraphics[width=1\textwidth]{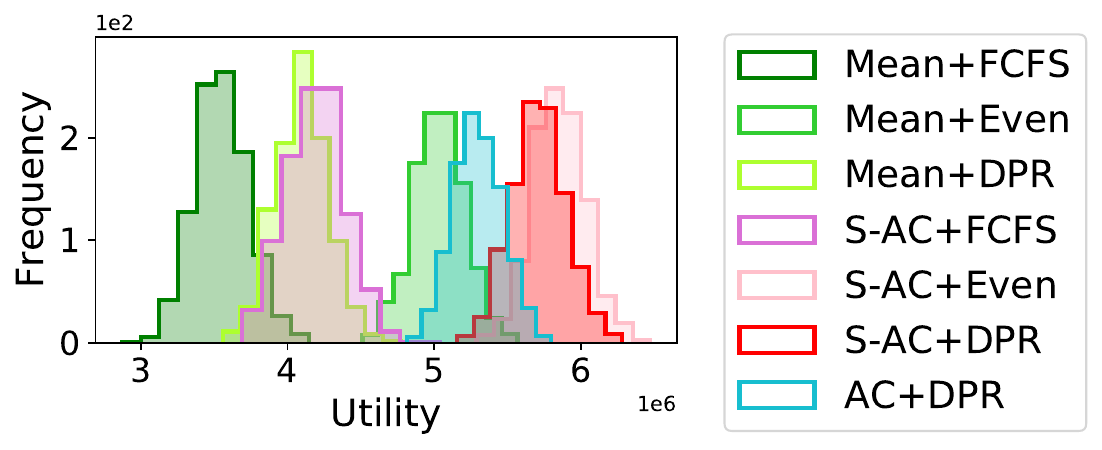}
		\caption{Luzerne}
	\end{subfigure}
	\caption{Comparison of the cumulative utilities for each method.}
	\label{fig:case.hist_all}
\end{figure}

When both levels' policies are heuristics (i.e., \texttt{Mean+FCFS} and \texttt{Mean+Even}), the utility of all PODs are low, with \texttt{Mean+FCFS} leading to the lowest utilities in all cases.
When only the upper-level is a heuristic (i.e., \texttt{Mean+DPR}), the utilities are still not particularly high; in fact, this method ranks in the bottom three policies for all the PODs except for Allegheny.
When the upper-level is \texttt{S-AC} and the lower-level is a heuristic (i.e., \texttt{S-AC+FCFS} and \texttt{S-AC+Even}), the utilities of the top three counties are higher than the utilities by achieved using \texttt{Mean}. The policy \texttt{S-AC+Even} never falls in the bottom three policies;  \texttt{S-AC+FCFS} performs reasonably but is part of the bottom three policies for Dauphin and Luzerne.
In summary, both the upper-level and the lower-level policies play an important in this problem: a properly designed lower-level heuristic can achieve good utility values for some of the PODs; however, intelligent policies on both the upper and lower-levels is necessary to achieve the overall improvement.

\subsubsection{Ordering Cost Sensitivity Analysis} 
Table~\ref{table:case_sensitivity} shows the effect of the ordering cost on the performance (in terms of value achieved) of the various algorithms. The other costs (holding cost and disposal cost) exhibited very minor effects on the value and thus we omitted the results.
Each value in the table is an average of twenty replications of the algorithm, and for each replication of the ADP algorithms, \texttt{S-AC+DPR} and \texttt{AC+DPR}, we take the policy learned by the algorithm at iteration 1,000 and evaluate it by averaging 100 simulations. 
The table shows that \texttt{S-AC+DPR} outperforms the other approaches in all settings. When the ordering cost increases to 5 times (increases from 185 to 925), the value of \texttt{S-AC+DPR} decreases 9.35\%, and when it increases to 20 times (increases from 185 to 3,700), the value decreases 42.48\%. 
These results indicate that the ordering cost of naloxone has a significant influence on the operations of a public health department.

\begin{table}[ht]
\footnotesize
\centering
\caption{Simulated value of the policies on instances with different ordering costs (value in 10 million).}
\label{table:case_sensitivity}
\begin{tabular}{c|ccccccc}
\toprule
Ordering cost & \texttt{Mean+FCFS} & \texttt{Mean+Even} & \texttt{Mean+DPR} & \texttt{S-AC+FCFS} & \texttt{S-AC+Even} & \texttt{S-AC+DPR} & \texttt{AC+DPR} \\\hline
185   & 2.07      & 3.10      & 3.31     & 3.81      & 4.30      & \textbf{4.92} & 4.64 \\
925   & 1.94      & 2.96      & 3.39     & 3.70      & 4.10      & \textbf{4.46} & 4.34 \\
1,850  & 1.75      & 2.77      & 3.04     & 3.12      & 3.56      & \textbf{3.92} & 3.76 \\
3,700  & 1.39      & 2.41      & 2.00     & 2.06      & 2.62      & \textbf{2.83} & 2.56 \\
\bottomrule
\end{tabular}
\end{table}

\subsection{Extensions}
We showed how an aggregation-based version of S-AC along with $k$-means clustering can be used to handle the multi-dimensional continuous features used in the case study. There are also other possible extensions to S-AC that can make it more scalable to high-dimensional problems. For example, shape-constrained deep neural networks \citet{gupta2019incorporate} \citet{gupta2021pender} can handle both monotonicity and concavity via penalization of derivatives during training. In principle, our S-AC algorithm could be extended to use techniques like these, but the same core principles of S-AC would remain intact. We leave these investigations to future work.

\section{Conclusions} \label{sec:conclusions}

In this paper, we formulate a hierarchical MDP model for the sequential problem of optimizing inventory control and making dispensing decisions for a public health organization. We propose a novel, provably convergent actor-critic algorithm that utilizes problem structure in both the policy and value approximations (state-dependent basestock structure for the policy and concavity for the value functions).
Although the algorithm is developed in the setting of our specific MDP, the general paradigm of a structured actor-critic algorithm is likely to be of broader methodological interest. Numerical experiments show that high-quality policies can be obtained in a small number of iterations and that the convergence of the policy is significantly less noisy when compared to competing algorithms. Lastly, we propose an aggregation-based version of our algorithm and provide a case study for the problem of dispensing naloxone to first responders.

\section*{Acknowledgements}
The authors thank Mohamed Kashkoush for invaluable assistance with data collection and analysis, and Hawre Jalal for providing the background of the case study. This research was supported by a Central Research Development Fund grant from the University of Pittsburgh.

\clearpage

\begin{appendices}
\section{Proofs} \label{appendix: proofs}
In this section, we give the proofs of results from the main paper: Proposition~\ref{prop: V_tilde_separate is concave in z}, Lemma~\ref{lemma: convergence of vt}, and Theorem~\ref{thm: v and z converge}.

\subsection{Proof of Proposition~\ref{prop: U is concave in r}} 
\label{appendix: proof property inner}
	
We prove the $L^\natural$-concavity of the $Q$-value function of the lower-level problem by backward induction. Note that if this is true, then the discrete concavity of $U_{w,i}(x,\xi)$ in $x$ follows by Lemma 2 of \citet{chen2014coordinating}. Let $J_{w,i}(x,\xi,y)$ be the $Q$-value for a given state-action pair $(x,\xi,y)$ at period $i$:
\begin{equation*}
    J_{w,i}(x,\xi,y) = u_w\bigl(y,\xi\bigr) + \E_w\bigl[ U_{w,i+1}(X_{i+1},\Xi_{i+1})\bigr].
\end{equation*}
The base case is $J_{w,n+1}(x,\xi,y) = b\,x$, which is $L^\natural$-concave in $(x,y)$. The induction hypothesis is that $J_{w,i+1}(x,\xi,y)$ is $L^\natural$-concave in $(x,y)$.

We analyze $J_{w,i}(x,\xi,y)$ by breaking it up into two terms. The first term is $L^\natural$-concave in $y$ according to Assumption~\ref{assumption: properties of u}.
The second term $U_{w,i+1}(X_{i+1},\Xi_{i+1})$ is $L^\natural$-concave in $X_{i+1}$ according to Lemma 2 of \citet{chen2014coordinating}. 
Since $X_{i+1} = x-y$, $U_{w,i+1}(X_{i+1},\Xi_{i+1})$ is $L^\natural$-concave in $(x,y)$ by Lemma~2 in \citet{zipkin2008structure}. This concludes the proof.

\subsection{Proof of Proposition~\ref{prop: V_tilde_separate is concave in z}} 
\label{appendix: proof property outer_separate}

First, we prove part 1. Let us define the \emph{state-action value function} (or the $Q$-value). The terminal value is defined as $Q_T^\textnormal{rep}(r,w;z^\textnormal{rep}) = -b\,r$. For $t<T$, replenish-up-to decision $z^\textnormal{rep}\in \bar{\mZ}(r)$, and dispense-down-to decision $z^\textnormal{dis}\in \underline{\mZ}(z^\textnormal{rep})$, 
\begin{equation} \label{eq.Q_rep}
	Q_t^\textnormal{rep}(r,w;z^\textnormal{rep}) = (c_w-h)\,r - c_w z^\textnormal{rep} + V_t^\textnormal{dis}(z^\textnormal{rep},w),
\end{equation}
\begin{equation} \label{eq.Q_dis}
	Q_t^\textnormal{dis}(z^\textnormal{rep},w;z^\textnormal{dis}) = \E_w\bigl[ U_{w,0}\bigl(z^\textnormal{rep}-z^\textnormal{dis},\Xi_{t,0}\bigr) + V_{t+1}^\textnormal{rep} (R_{t+1},W_{t+1}) \bigr],
\end{equation}
where $R_{t+1}=z^\textnormal{dis}$. 
We now prove the $L^\natural$-concavity of $Q$-value by backward induction. Note that if this is true, then the $L^\natural$-concavity of $\tilde{V}_t^\textnormal{rep}$ and $\tilde{V}_t^\textnormal{dis}$ follows. 
The base case is $Q_T^\textnormal{rep}(r,w;z^\textnormal{rep}) = -b\,r$, which is $L^\natural$-concave in $(r,z^\textnormal{rep})$, and the induction hypothesis is the same property for $Q_{t+1}^\textnormal{rep}(r,w;z^\textnormal{rep})$.

We first analyze \eqref{eq.Q_dis} by breaking it up into two terms. The first term $\E_w\bigl[ U_{w,0}\bigl(z^\textnormal{rep}-z^\textnormal{dis},\Xi_{t,0}\bigr) \bigr]$ is discretely concave in $(z^\textnormal{rep}, z^\textnormal{dis})$ according to Proposition~\ref{prop: U is concave in r} and Lemma~2 in \citet{zipkin2008structure}. In the second term, $V_{t+1}^\textnormal{rep}(r,w) = \max_{z^\textnormal{rep}\in\bar{\mZ}(r)} Q_{t+1}^\textnormal{rep}(r,w;z^\textnormal{rep})$. Lemma 2 of \citet{chen2014coordinating} shows that $V_{t+1}^\textnormal{rep}(r,w)$ is $L^\natural$ concave in $r$. Since $R_{t+1}=z^\textnormal{dis}$, the term $V_{t+1}^\textnormal{rep}(R_{t+1},W_{t+1})$ is $L^\natural$-concave in $z^\textnormal{dis}$. $L^\natural$-concavity is preserved under expectations, so $Q_t^\textnormal{dis}(z^\textnormal{rep},w;z^\textnormal{dis})$ is $L^\natural$-concave in $(z^\textnormal{rep},z^\textnormal{dis})$. 

Next, we analyze \eqref{eq.Q_rep} by breaking it up into two terms. The first term $(c_w-h)\,r - c_w z^\textnormal{rep}$ is clearly $L^\natural$-concave in $(r,z^\textnormal{rep})$. The second term $V_t^\textnormal{dis}(z^\textnormal{rep},w) = \max_{z^\textnormal{dis}\in \underline{\mZ}(z^\textnormal{rep})} Q_t^\textnormal{dis}(z^\textnormal{rep},w;z^\textnormal{dis})$. Lemma 2 of \citet{chen2014coordinating} shows that $V_t^\textnormal{dis}(z^\textnormal{rep},w)$ is $L^\natural$ concave in $z^\textnormal{rep}$. This concludes Part 1.

\subsection{Proof of Lemma~\ref{lemma: convergence of vt}}
\label{appendix. proof of convergence of vt}

Let us show part (1), the convergence of $\bar{v}_t^{\textnormal{rep},k}(z^{\textnormal{rep}},w)$. The convergence of $\bar{v}_t^{\textnormal{dis},k}(z^{\textnormal{dis}},w)$ in part (2) of the lemma is similar.
Since the demand $D_{t,i}$ is bounded by $D_{\textnormal{max}}$, there exists a $v_\text{max}^{\textnormal{rep}} > 0$ such that $|v_t^{\textnormal{rep}}(z^{\textnormal{rep}},w)| \le v_\text{max}^{\textnormal{rep}}$ for all $t$, $z^{\textnormal{rep}}$, and $w$. We first construct two deterministic sequences $\{G^m\}$ and $\{I^m\}$ such that $G^0 = v^{\textnormal{rep}} + v_\text{max}^{\textnormal{rep}}$ and $I^0 = v^{\textnormal{rep}} - v_\text{max}^{\textnormal{rep}}$ with
\begin{equation} \label{eq.G}
	G^{m+1} = \frac{G^m + v^{\textnormal{rep}}}{2}\quad \text{and} \quad I^{m+1} = \frac{I^m + v^{\textnormal{rep}}}{2} .
\end{equation}
It is easy to show that
\begin{equation} \label{eq.giconv}
	G^m \rightarrow v^{\textnormal{rep}} \quad  \text{and}  \quad I^m \rightarrow v^{\textnormal{rep}}.
\end{equation}
Our goal in this proof is to show that for any $m$ and sufficiently large $k$, 
\begin{equation} \label{eq.IvG}
	I_t^m(z^{\textnormal{rep}},w) \leq \bar{v}_t^{\textnormal{rep},k-1}(z^{\textnormal{rep}},w) \leq G_t^m(z^{\textnormal{rep}},w).
\end{equation} 
If \eqref{eq.IvG} is true, then we can conclude the result of Lemma~\ref{lemma: convergence of vt} by \eqref{eq.giconv}.

We now introduce a random set of states $ \mS_t^- $ that are increased by the projection operator \eqref{eq.concavity-projection} on finitely many iterations $k$. Formally, let
\begin{align*}
    \mS_t^- = \bigl \{(z^{\textnormal{rep}},w) \in \mS: \tilde{v}_t^{\textnormal{rep},k}(z^{\textnormal{rep}},w) < \bar{v}_t^{\textnormal{rep},k}(z^{\textnormal{rep}},w) \; \; \text{finitely often} \bigr\}.
\end{align*}	 
Let $\bar{K}$ be the random variable that describes the iteration number after which states in $\mS_t^-$ are no longer increased by the projection step; i.e., for all $(z^{\textnormal{rep}},w) \in \mS_t^-$, it holds that $\tilde{v}_t^{\textnormal{rep},k}(z^{\textnormal{rep}},w) \ge \bar{v}_t^{\textnormal{rep},k}(z^{\textnormal{rep}},w)$ for all $k \ge \bar{K}$. We break apart \eqref{eq.IvG} into two separate inequalities; this proof will focus on showing that for a fixed $m$, there exists a finite random index $\hat{K}_t^{m}$ such that for all $k \geq \hat{K}_t^{m}$, 
\begin{equation} \label{eq.v-and-G}
	\bar{v}_t^{\textnormal{rep},k-1}(z^{\textnormal{rep}},w) \leq G_t^m(z^{\textnormal{rep}},w).
\end{equation} 
The state space $\mS$ can be partitioned into two parts: (1) states $(z^{\textnormal{rep}},w) \in \mS^-_t$ and (2) states $(z^{\textnormal{rep}},w)\in \mS \setminus \mS_t^-$. The proof of \eqref{eq.v-and-G} will consider each partition separately. We now define some noise terms and stochastic sequences. Recall from \eqref{eq.V-and-f-rep} and \eqref{eq.v_rep_hat} that $\hat{v}_t^{\textnormal{rep},k} = \hat{V}_t^{\textnormal{rep},k}(z_t^{\textnormal{rep},k}, w_t^k) - \hat{V}_t^{\textnormal{rep},k}(z_t^{\textnormal{rep},k}-1, w_t^k)$, where
\begin{equation*}
	\begin{aligned}
	\hat{V}_t^{\textnormal{rep},k}(z^\textnormal{rep},w_t^k) =& -c_{w_t^k} z^\textnormal{rep} + U_{w_t^k,0}^{\mu^*} \bigl( z^\textnormal{rep} - \tilde{\pi}_t^{\textnormal{dis},k-1}(z^\textnormal{rep},w_t^k), \check{\xi}_{t,0}^k\bigr) \\&+ f_{t}^{\textnormal{dis}} \bigl(\tilde{\pi}^{\textnormal{rep},k-1}, \tilde{\pi}^{\textnormal{dis},k-1}; \mathbf{Z}^k_{t}(w_{t}), z^\textnormal{rep}\bigr),
	\end{aligned}
\end{equation*}
By our assumption that $\overbar{l}_{\tau}^{\textnormal{rep},k}(w) \rightarrow l_\tau^{\textnormal{rep}}(w)$ almost surely for $\tau \geq t+1$, and $\overbar{l}_{\tau}^{\textnormal{dis},k}(w) \rightarrow l_\tau^{\textnormal{dis}}(w)$ almost surely for $\tau \geq t$, and the fact that $f_{t}^{\textnormal{dis}} \bigl(\tilde{\pi}^{\textnormal{rep},k-1}, \tilde{\pi}^{\textnormal{dis},k-1}; \mathbf{Z}^k_{t}(w_{t}), z^\textnormal{rep}\bigr)$ depends only on the replenish-up-to thresholds for periods $t+1$ onward and the dispense-down-to thresholds for periods $t$ onward, it follows that the simulated value of $\tilde{\pi}^{\textnormal{rep},k-1}$ and $\tilde{\pi}^{\textnormal{dis},k-1}$ becomes unbiased asymptotically:
\begin{equation} \label{eq.Ef-converge}
	\mathbf{E}_w \bigl[f_{t}^{\textnormal{dis}} \bigl(\tilde{\pi}^{\textnormal{rep},k-1}, \tilde{\pi}^{\textnormal{dis},k-1}; \mathbf{Z}^k_{t}(w), z^\textnormal{rep}\bigr)\bigr] \rightarrow \tilde{V}_{t}^{\textnormal{dis}}\bigl(\pi_{t}^{\textnormal{dis},*}(z^\textnormal{rep},w),w\bigr) \quad \text{a.s.}
\end{equation}
We define the noise term $\epsilon_t^{k}(z_t^{\textnormal{rep},k},w_t^k)$ such that
\begin{align} \label{eqilon1}
	\epsilon_t^{k}(z_t^{\textnormal{rep},k},w_t^k) = \E \bigl[\hat{v}_t^{\textnormal{rep},k}\bigr] - v_t^{\textnormal{rep}}(z_t^{\textnormal{rep},k}, w_t^k).
\end{align}
Note that we can conclude from \eqref{eq.Ef-converge} that $\epsilon_t^{k}(z_t^{\textnormal{rep},k},w_t^k) \rightarrow 0$ almost surely. We define another noise term $\varepsilon_t^{k}(z_t^{\textnormal{rep},k}, w_t^k)$ such that $\varepsilon_t^{k}(z_t^{\textnormal{rep},k}, w_t^k) = \hat{v}_t^{\textnormal{rep},k} - \E\bigl[\hat{v}_t^{\textnormal{rep},k}\bigr]$. Thus, we can see that 
\begin{equation} \label{eq.vhat noise}
	\hat{v}_t^{\textnormal{rep},k} = v_t^{\textnormal{rep}}(z_t^{\textnormal{rep},k}, w_t^k) + \epsilon_t^{k}(z_t^{\textnormal{rep},k},w_t^k) + \varepsilon_t^{k}(z_t^{\textnormal{rep},k}, w_t^k)
\end{equation}
Next, we need to define some stochastic sequences related to these noise terms. Let $\{\bar{s}_t^k\}$ be defined such that for $k<\bar{K}$, $\bar{s}_t^k(z^{\textnormal{rep}},w) = 0$, and for $k \geq \bar{K}$,
\begin{equation} \label{eq.s_bar}
	\bar{s}_t^k(z^{\textnormal{rep}},w) = \bigl(1-\alpha_t^k(z^{\textnormal{rep}},w)\bigr) \, \bar{s}_t^{k-1}(z^{\textnormal{rep}},w) + \alpha_t^k(z^{\textnormal{rep}},w) \bigl[ \epsilon_t^k(z_t^{\textnormal{rep},k},w_t^k) + \varepsilon_t^k(z_t^{\textnormal{rep},k}, w_t^k) \bigr]. 
\end{equation}
This sequence averages both of the noise terms. Since $\epsilon_t^k$ is unbiased and $\varepsilon_t^k$ converges to zero, we can apply Theorem 2.4 of \cite{kushner2003stochastic}, a standard stochastic approximation convergence result, to conclude that $\bar{s}_t^k(z^{\textnormal{rep}},w) \rightarrow 0$ almost surely. We then define a stochastic bounding sequence $\{\bar{g}_t\}$ such that for $k<\bar{K}$, $\bar{g}_t^k(z^{\textnormal{rep}},w) = G_t^k(z^{\textnormal{rep}},w)$ and for $k\geq\bar{K}$,
\begin{equation} \label{eq.g_bar}
	\bar{g}_t^k(z^{\textnormal{rep}},w) = \bigl(1-\alpha_t^k(z^{\textnormal{rep}},w)\bigr) \, \bar{g}_t^{k-1}(z^{\textnormal{rep}},w) + \alpha_t^k(z^{\textnormal{rep}},w) \,v_t^{\textnormal{rep}}(z^{\textnormal{rep}},w). 
\end{equation}

\paragraph{Part (1). }
As in \cite{nascimento2009optimal}, we provide an $\omega$-wise argument, meaning that we consider a fixed $\omega \in \Omega$ (although the dependence of random variables on $\omega$ is omitted for notational simplicity). Here, we show the existence of a finite index $\tilde{K}_t^m$ such that for all states $(z^{\textnormal{rep}},w) \in \mS^-_t$, it holds that for all iterations $k \geq \tilde{K}_t^m$, $\bar{v}_t^{\textnormal{rep},k-1}(z^{\textnormal{rep}},w) \leq G_t^m(z^{\textnormal{rep}},w)$. The proof is a forward induction on $m$ where the base case is $m=0$. The base case can be easily proved by applying the definition of $ G^0 $ (note that we can select $\tilde{K}_t^m \ge \bar{K}$. The induction hypothesis is that there exists an integer $ \tilde{K}_t^m \ge \bar{K}$ such that for all $ k \geq \tilde{K}_t^m $, the inequality \eqref{eq.v-and-G} is true. The next step is $ m+1 $: we must show the existence of an integer $\tilde{K}_t^{m+1} \ge \bar{K}$ such that for all states $(z^{\textnormal{rep}},w) \in \mS^-_t$, it holds that
\begin{equation} \label{eq.vbar and G}
	\bar{v}_t^{\textnormal{rep},k-1}(z^{\textnormal{rep}},w) \leq G_t^{m+1}(z^{\textnormal{rep}},w)  
\end{equation}
for all iterations $k \geq \tilde{K}_t^{m+1}$. We require the following lemma.

\begin{lemma}\label{lem:vgs}
The inequality
\begin{equation} \label{eq.vgs}
	\bar{v}_t^{\textnormal{rep},k-1}(z^{\textnormal{rep}},w) \leq \bar{g}_t^{k-1}(z^{\textnormal{rep}},w) + \bar{s}_t^{k-1}(z^{\textnormal{rep}},w)
\end{equation}
holds almost everywhere on $\{k \geq \tilde{K}_t^m,\; (z^{\textnormal{rep}},w)\in \mS^-_t\}$.
\end{lemma}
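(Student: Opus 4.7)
The plan is to prove the lemma by forward induction on the iteration index $k$, starting at the base $k = \tilde{K}_t^m$. For the base case, I will invoke the outer induction hypothesis (on $m$) to get $\bar{v}_t^{k-1}(z,w) \leq G_t^m(z,w)$. Taking without loss of generality $\tilde{K}_t^m \geq \bar{K}$ and aligning the base so that $k-1 < \bar{K}$, the definitions give $\bar{g}_t^{k-1}(z,w) = G_t^m(z,w)$ and $\bar{s}_t^{k-1}(z,w) = 0$, and the desired inequality holds with equality on the right-hand side.

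For the inductive step, I will split into two cases according to whether the fixed state $(z,w) \in \mS_t^-$ equals the currently sampled state $(z_t^k, w_t^k)$. In the non-update case $\alpha_t^k(z,w) = 0$, so by \eqref{eq.g_bar} and \eqref{eq.s_bar} the two averaged sequences are unchanged and the pre-projection slope satisfies $\tilde{v}_t^k(z,w) = \bar{v}_t^{k-1}(z,w)$; the inductive hypothesis then transfers if the projection does not increase $\bar{v}_t^k(z,w)$, which holds because $(z,w) \in \mS_t^-$ and $k \geq \bar{K}$ force $\tilde{v}_t^k(z,w) \geq \bar{v}_t^k(z,w)$ by the very definition of $\mS_t^-$ and $\bar{K}$. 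In the update case, substituting the decomposition \eqref{eq.vhat noise} into Step \ref{alg: S-AC 2} of Algorithm \ref{alg: S-AC} and comparing with \eqref{eq.g_bar} and \eqref{eq.s_bar} yields the key algebraic identity
\begin{equation*}
\tilde{v}_t^k(z,w) - \bar{g}_t^k(z,w) - \bar{s}_t^k(z,w) = (1-\tilde{\alpha}_t^k)\bigl[\bar{v}_t^{k-1}(z,w) - \bar{g}_t^{k-1}(z,w) - \bar{s}_t^{k-1}(z,w)\bigr],
\end{equation*}
which is nonpositive by the inductive hypothesis since $\tilde{\alpha}_t^k \in [0,1]$; the projection step at $(z_t^k,w_t^k)$ leaves this coordinate unchanged, so $\bar{v}_t^k(z,w) = \tilde{v}_t^k(z,w) \leq \bar{g}_t^k(z,w) + \bar{s}_t^k(z,w)$, completing the step.

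The main obstacle is the careful bookkeeping around the random threshold $\bar{K}$, since $\bar{g}_t^k$ and $\bar{s}_t^k$ are only governed by the averaging recursions for $k \geq \bar{K}$ (being frozen at $G_t^m$ and $0$ before that). I will address this by taking $\tilde{K}_t^m \geq \bar{K}$ from the outset, so that (i) the outer induction hypothesis is compatible with the initial values of $\bar{g}_t^{k-1}$ and $\bar{s}_t^{k-1}$ at the base step, and (ii) the projection cannot increase $\bar{v}_t^k(z,w)$ for $(z,w) \in \mS_t^-$ throughout the induction. A secondary care point is that the identity in the update case exploits cancellation between the true slope $v_t(z,w)$ appearing in $\bar{g}_t^k$ and the decomposition $\hat{v}_t^k = v_t + \epsilon_t^k + \varepsilon_t^k$; once that cancellation is written out, the induction closes cleanly and the lemma, when combined with the almost-sure convergences $\bar{g}_t^{k-1} \to v_t$ and $\bar{s}_t^{k-1} \to 0$, will drive the subsequent step $\bar{v}_t^{k-1} \leq G_t^{m+1}$ needed to finish Part (1).
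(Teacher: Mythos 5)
Your proposal is correct and follows essentially the same route as the paper: forward induction on $k$ from the base $k=\tilde{K}_t^m$ (where the outer induction hypothesis and the initialization $\bar{g}=G^m$, $\bar{s}=0$ give the claim), then a case split on whether $(z,w)=(z_t^k,w_t^k)$, using the exact cancellation of $v_t$ and $\epsilon_t^k+\varepsilon_t^k$ against \eqref{eq.g_bar} and \eqref{eq.s_bar} in the update case and the definition of $\mS_t^-$ together with $\tilde{K}_t^m\ge\bar{K}$ in the non-update case. The only cosmetic difference is that you package the update-case computation as a single contraction identity in the residual $\bar{v}-\bar{g}-\bar{s}$, whereas the paper writes it as a chain of inequalities; both rely on the same facts (stepsizes in $[0,1]$ and the projection leaving the sampled coordinate unchanged).
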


\begin{proof}
When $k = \tilde{K}_t^m$, the relationship \eqref{eq.vgs} can be shown using the definitions of $\bar{g}_t^{k-1}(z^{\textnormal{rep}},w)$ and $\bar{s}_t^{k-1}(z^{\textnormal{rep}},w)$, along with the induction hypothesis \eqref{eq.v-and-G}. We now induct on $k$. Suppose that \eqref{eq.vgs} is true for a given $k \geq \tilde{K}_t^m$. The inductive step is to show $ \bar{v}_t^{\textnormal{rep},k}(z^{\textnormal{rep}},w) \leq \bar{g}_t^k(z^{\textnormal{rep}},w) + \bar{s}_t^k(z^{\textnormal{rep}},w) $. To simplify notation, let $\check{\alpha}_t^k$, $\check{v}_t^k$, $\check{s}_t^k$, and $\check{g}_t^k$ respectively denote $\alpha_t^k(z^{\textnormal{rep}},w)$, $\bar{v}_t^{\textnormal{rep},k}(z^{\textnormal{rep}},w)$, $\bar{s}_t^k(z^{\textnormal{rep}},w)$ and $\bar{g}_t^k(z^{\textnormal{rep}},w)$. For state $(z^{\textnormal{rep}},w) = (z_t^{\textnormal{rep},k},w_t^k)$, we have
\begin{align}
	\check{v}_t^k  &= \tilde{v}_t^{\textnormal{rep},k}(z^{\textnormal{rep}},w) = (1-\check{\alpha}_t^k) \check{v}_t^{k-1} + \check{\alpha}_t^k \hat{v}_t^{\textnormal{rep},k} \nonumber \\
	& \leq (1-\check{\alpha}_t^k) \bigl( \check{g}_t^{k-1} + \check{s}_t^{k-1} \bigr) + \check{\alpha}_t^k \hat{v}_t^{\textnormal{rep},k} - \check{\alpha}_t^k v_t^{\textnormal{rep}}(z_t^{\textnormal{rep},k},w_t^k) + \check{\alpha}_t^k v_t^{\textnormal{rep}}(z_t^{\textnormal{rep},k},w_t^k) \nonumber\\
	& = (1-\check{\alpha}_t^k) \bigl( \check{g}_t^{k-1} + \check{s}_t^{k-1} \bigr) + \check{\alpha}_t^k \bigl[ \epsilon_t^k(z_t^{\textnormal{rep},k},w_t^k) + \varepsilon_t^k(z_t^{\textnormal{rep},k}, w_t^k) \bigr] + \check{\alpha}_t^k v_t^{\textnormal{rep}}(z_t^{\textnormal{rep},k},w_t^k) \nonumber\\
	& = (1-\check{\alpha}_t^k)\check{g}_t^{k-1} + \check{s}_t^k + \check{\alpha}_t^k v_t^{\textnormal{rep}}(z_t^{\textnormal{rep},k},w_t^k) \nonumber \\
	& = \check{g}_t^k + \check{s}_t^k \nonumber.
\end{align}
The first equality is due to the fact that $(z^{\textnormal{rep}},w) = (z_t^{\textnormal{rep},k}, w_t^k)$, which is unaltered by the projection operator \eqref{eq.concavity-projection}. The second inequality follows from the induction hypothesis \eqref{eq.vgs}. The last three steps follow by \eqref{eq.vhat noise}, \eqref{eq.s_bar} and \eqref{eq.g_bar} respectively. 

For $(z^{\textnormal{rep}},w) \neq (z_t^{\textnormal{rep},k},w_t^k)$, which are the states that are not updated by a direct observation of the sample slope at iteration $k$, period $t$, the stepsize $\check{\alpha}_t^k = 0$. Then, we have
\begin{equation*}
	\check{s}_t^k = \check{s}_t^{k-1} \quad \text{and } \quad \check{g}_t^k = \check{g}_t^{k-1}. 
\end{equation*}
Therefore, from the definition of set $\mS^-_t $, the fact that $\tilde{K}_t^m \ge \bar{K}$, and the induction hypothesis, we have
\begin{equation*}
	\check{v}_t^k \leq \tilde{v}_t^{\textnormal{rep},k}(z^{\textnormal{rep}},w) = \check{v}_t^{k-1} \leq \check{g}_t^{k-1} + \check{s}_t^{k-1} = \check{g}_t^k + \check{s}_t^k,
\end{equation*}
which concludes the proof of \eqref{eq.vgs}. 
\end{proof}

Since $G^m \geq G^{m+1} \geq v^{\textnormal{rep}}$ for all $m$, when $G_t^m(z^{\textnormal{rep}},w) = v_t^{\textnormal{rep}}(z^{\textnormal{rep}},w) = G_t^{m+1}(z^{\textnormal{rep}},w)$, the inequality $\bar{v}_t^{\textnormal{rep},k-1}(z^{\textnormal{rep}},w) \leq G_t^m(z^{\textnormal{rep}},w)$ implies that $\bar{v}_t^{\textnormal{rep},k-1}(z^{\textnormal{rep}},w) \leq G_t^{m+1}(z^{\textnormal{rep}},w)$. Thus, the only remaining states to consider are the ones where $G_t^m(z^{\textnormal{rep}},w) > v_t^{\textnormal{rep}}(z^{\textnormal{rep}},w)$. Let $\delta^m$ be the minimum of the quantity $[ G_t^k(z^{\textnormal{rep}},w) - v_t^{\textnormal{rep}}(z^{\textnormal{rep}},w) ] / 4$ over states $(z^{\textnormal{rep}},w) \in \mS_t^-$ with $G_t^m(z^{\textnormal{rep}},w) > v_t^{\textnormal{rep}}(z^{\textnormal{rep}},w)$. Define an integer $K^G \geq \tilde{K}_t^m$ such that for all states $(z^{\textnormal{rep}},w) \in \mS^-_t$,
\begin{equation*}
	\textstyle \prod_{k=\tilde{K}_t^m}^{K^G-1} \bigl( 1 - \alpha_t^k(z^{\textnormal{rep}},w) \bigr) \leq 1/4 \quad \text{and} \quad \bar{s}_t^k(z^{\textnormal{rep}},w) \leq \delta^m.
\end{equation*}
for every iteration $k \geq K^G$. We can find such a $K^G$ because the stepsize conditions of Assumption~\ref{assumption: stepsize} imply that 
\begin{equation*}
	\textstyle \prod_{k=\tilde{K}_t^m}^{\infty} \bigl( 1 - \alpha_t^k(z^{\textnormal{rep}},w) \bigr) = 0, 
\end{equation*}
and because $\bar{s}_t^k(z^{\textnormal{rep}},w)$ converges to zero.

Now we are ready to show \eqref{eq.vbar and G}. 
The definition of the sequence $\{\bar{g}_t^k\}$ implies that $\bar{g}_t^k(z^{\textnormal{rep}},w)$ is a convex combination of $G_t^k(z^{\textnormal{rep}},w)$ and $v_t^{\textnormal{rep}}(z^{\textnormal{rep}},w)$, of the form
\begin{equation*}
	\bar{g}_t^k(z^{\textnormal{rep}},w) = \hat{\alpha}_t^{k}(z^{\textnormal{rep}},w) \, G_t^k(z^{\textnormal{rep}},w) + \bigl( 1 - \hat{\alpha}_t^{k}(z^{\textnormal{rep}},w) \bigr) \, v_t^{\textnormal{rep}}(z^{\textnormal{rep}},w),
\end{equation*}
\looseness-1 where $\hat{\alpha}_t^{k}(z^{\textnormal{rep}},w) = \prod_{k=\tilde{K}_t^m}^{K-1} \bigl(1 - \alpha_t^k(z^{\textnormal{rep}},w)\bigr) \leq 1/4$ for $k \geq K^G$. Because $G^m \geq v^{\textnormal{rep}}$ for any $m$, it follows that 
\begin{align*}
	\bar{g}_t^k(z^{\textnormal{rep}},w) & \leq \textstyle \frac{1}{4} \, G_t^k(z^{\textnormal{rep}},w) + \frac{3}{4} \, v_t^{\textnormal{rep}}(z^{\textnormal{rep}},w) \\
	& = \textstyle \frac{1}{2} \, G_t^k(z^{\textnormal{rep}},w) + \frac{1}{2} \, v_t^{\textnormal{rep}}(z^{\textnormal{rep}},w) - \frac{1}{4} \, \bigl(G_t^k(z^{\textnormal{rep}},w) - v_t^{\textnormal{rep}}(z^{\textnormal{rep}},w)\bigr) \\
	& \leq G_t^{k+1}(z^{\textnormal{rep}},w) - \delta^m, 
\end{align*}
where the second inequality follows from \eqref{eq.G} and the definition of $\delta^m$. Recall that we are concentrating on the case where $G_t^m(z^{\textnormal{rep}},w) > v_t^{\textnormal{rep}}(z^{\textnormal{rep}},w)$, so $\delta^m$ is well-defined and positive. This inequality, together with Lemma~\ref{lem:vgs} and $\bar{s}_t^k(z^{\textnormal{rep}},w) \leq \delta^m$, imply that for all $k \geq K^G$, 
\begin{align*}
	\bar{g}_t^k(z^{\textnormal{rep}},w) \leq G_t^{k+1}(z^{\textnormal{rep}},w) - \delta^m + \bar{s}_t^k(z^{\textnormal{rep}},w) 
	\leq G_t^{k+1}(z^{\textnormal{rep}},w) - \delta^m + \delta^m 
	\leq G_t^{k+1}(z^{\textnormal{rep}},w). 
\end{align*}
We conclude Part (1) of the proof by letting $\tilde{K}_t^{m+1} = K^G$.

\paragraph{Part (2). } We now focus on the states $(z^{\textnormal{rep}},w) \in \mS \setminus \mS^-_t$ that are increased infinitely often. For a fixed $m$ and state $(z^{\textnormal{rep}},w) \in \mS \setminus \mS^-_t$, we wish to prove the existence of a random index $\hat{K}_t^{m}(z^{\textnormal{rep}},w)$ such that for all $k \ge \hat{K}_t^{m}(z^{\textnormal{rep}},w)$, it holds that $\bar{v}_t^{\textnormal{rep},k-1}(z^{\textnormal{rep}},w) \leq G_t^m(z^{\textnormal{rep}},w)$. Note that $\hat{K}_t^m(z^{\textnormal{rep}},w)$ differs from $\tilde{K}_t^m$ in that it depends on a specific $(z^{\textnormal{rep}},w) \in \mS \setminus \mS^-_t$ (while we $\tilde{K}_t^m$ is chosen uniformly for all states in $\mS_t^-$). The crux of the proof depends on the following lemma.

\begin{lemma} \label{lemma. G}
	Fix $m\geq 0$ and consider a state $(z^{\textnormal{rep}}-1,w)\in \mS \setminus \mS_t^-$ and suppose that there exists a random index $\hat{K}_t^{m}(z^{\textnormal{rep}},w)$ such that the required condition $\bar{v}_t^{\textnormal{rep},k-1}(z^{\textnormal{rep}},w) \leq G_t^m(z^{\textnormal{rep}},w)$ is true, then there exists another random index $\hat{K}_t^{m}(z^{\textnormal{rep}}-1,w)$ such that $\bar{v}_t^{\textnormal{rep},k-1}(z^{\textnormal{rep}}-1,w) \leq G_t^m(z^{\textnormal{rep}}-1,w)$ for all iterations $k \geq \hat{K}_t^{m}(z^{\textnormal{rep}}-1,w)$.
\end{lemma}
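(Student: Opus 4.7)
The plan is to exploit the structural fact that the concavity projection~\eqref{eq. concavity projection} can only \emph{lift} $\bar v_t^k(z-1,w)$ by setting it equal to $\tilde v_t^k(z_t^k, w) = \bar v_t^k(z_t^k, w)$ for some $z_t^k \geq z$ with $w_t^k = w$, and to combine this with the stochastic-approximation bounding-sequence technique of Part~(1). Because $v_t(\cdot, w)$ is nonincreasing in $z$ (by the concavity of $\tilde V_t$ in \Cref{prop: V_tilde is concave in z}), the deterministic majorant $G_t^m(\cdot, w)$ inherits nonincreasingness in $z$, so $G_t^m(z_t^k, w) \leq G_t^m(z-1, w)$. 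This monotonicity supplies the slack that allows the bound at $(z-1,w)$ to inherit from bounds already established at higher states.

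\textbf{Main steps.} First, I would combine the lemma's hypothesis at $(z,w)$ with Part~(1)'s $\tilde K_t^m$ (for higher states in $\mS_t^-$) and the inductive hypothesis of an outer backward induction on $z$ (for higher states in $\mS \setminus \mS_t^-$) to produce a single random index $K^\star$ such that $\bar v_t^{k'}(z', w) \leq G_t^m(z', w)$ holds simultaneously for every $z' \geq z$ and every $k' \geq K^\star$; finiteness of the state space makes this maximum of a.s.-finite indices well-defined. Next, for $k \geq K^\star$, I would partition the possible updates to $\bar v_t^k(z-1, w)$ into four cases: (i) $w_t^k \neq w$ or no relevant projection triggers---the value is unchanged and the bound persists; (ii) projection decrease ($z_t^k < z-1$ with $\tilde v_t^k(z-1,w) > \tilde v_t^k(z_t^k,w)$)---the value only shrinks; (iii) smoothed update $(z_t^k, w_t^k) = (z-1, w)$---apply Part~(1) directly at $(z-1, w)$ by constructing sequences $\bar g_t^k(z-1,w)$ and $\bar s_t^k(z-1, w)$ analogously to \eqref{eq.g_bar}--\eqref{eq.s_bar}, and exploit the strict slack $\delta^m := [G_t^m(z-1, w) - v_t(z-1, w)]/4 > 0$; (iv) projection increase ($z_t^k \geq z$)---use $\bar v_t^k(z-1, w) = \bar v_t^k(z_t^k, w)$, apply the Part~(1) bounding argument at the higher state $(z_t^k, w)$, and conclude via $G_t^m(z_t^k, w) \leq G_t^m(z-1, w)$.

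\textbf{Main obstacle.} The crux is case (iv): since the source index $z_t^k$ of the projection lift is random and may vary across iterations, the Part~(1)-style bounding sequences $\bar g_t^k(z', w)$ and $\bar s_t^k(z', w)$ must be controlled \emph{uniformly} across all $z' \geq z$, not merely at a single state. Finiteness of the state space is essential here: the needed uniform index is built by taking the maximum of the finitely many a.s.-finite indices arising both from the outer backward induction on $z'$ and from the almost-sure convergence $\bar s_t^k(z', w) \to 0$ (Theorem~2.4 of~\citet{kushner2003stochastic}, as used in Part~(1)). With these simultaneous bounds in place, choosing $\hat K_t^m(z-1, w)$ large enough so that, beyond $k \geq K^\star$, the geometric factor $\prod_{k'=K^\star}^{k-1}\bigl(1-\alpha_t^{k'}(z', w)\bigr) \leq 1/4$ and $\bar s_t^k(z', w) \leq \delta^m$ hold for every relevant $z' \geq z-1$, the four-case analysis then completes the inductive step.
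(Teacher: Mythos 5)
Your proposal is necessarily more detailed than the paper's, because the paper does not actually prove \Cref{lemma. G}: it defers entirely to Lemma 6.4 of \citet{nascimento2009optimal}, with a one-line remark about reinterpreting their operator $H$. Against that backdrop, your structural observations are correct and are the right load-bearing ones: an increase of $(z-1,w)$ under \eqref{eq. concavity projection} can only set $\bar v_t^k(z-1,w)$ equal to $\tilde v_t^k(z_t^k,w_t^k)=\bar v_t^k(z_t^k,w_t^k)$ with $z_t^k\ge z$; $G_t^m(\cdot,w)=v_t(\cdot,w)+v_{\text{max}}/2^m$ is nonincreasing in $z$ because $v_t(\cdot,w)$ is; and since the source $z_t^k$ of a lift can be any level in $\{z,\dots,R_{\textnormal{max}}\}$, one genuinely needs bounds at \emph{all} higher inventory levels, which you correctly assemble from Part (1) (for higher states in $\mS_t^-$) and the downward induction on $z$ (for higher states in $\mS\setminus\mS_t^-$), exactly as the paper's Part (2) does when it walks down from $(z_{\textnormal{min}},w)$.

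The gap is in how your cases (iii) and (iv) compose across iterations. Case (iii) invokes the Part (1) machinery, but inequality \eqref{eq.vgs} of Lemma \ref{lem:vgs} is an induction on $k$ whose hypothesis is the \emph{tight} bound $\bar v_t^{k-1}(z-1,w)\le \bar g_t^{k-1}(z-1,w)+\bar s_t^{k-1}(z-1,w)$. After an iteration in which case (iv) applies you only know $\bar v_t^{k-1}(z-1,w)\le G_t^m(z-1,w)$, and since $\bar g_t^{k}\to v_t$ and $\bar s_t^{k}\to 0$, the quantity $\bar g_t^{k-1}+\bar s_t^{k-1}$ is typically strictly below $G_t^m$, so the weaker bound does not let you re-enter that induction. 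Concretely, a smoothing step at $(z-1,w)$ immediately following a projection lift averages the post-lift value with a noisy observation $\hat v_t^k=v_t+\epsilon_t^k+\varepsilon_t^k$ whose noise need not be smaller than the gap $G_t^m(z-1,w)-v_t(z-1,w)=v_{\text{max}}/2^m$, so nothing in the four-case outline prevents the bound from being transiently violated. The only available slack, $\delta^m$, is the slack of the $m\to m+1$ recursion in Part (1), whereas \Cref{lemma. G} keeps $m$ fixed on both sides; making the argument work requires re-anchoring the bounding sequences $\bar g,\bar s$ at every increase iteration and carrying enough slack from the higher states (which satisfy the strictly stronger $\bar g+\bar s$ bound) to absorb the post-increase noise. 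That bookkeeping is precisely the nontrivial content of the cited Lemma 6.4 that your sketch elides.
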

\begin{proof}
See the proof of Lemma 6.4 of \cite{nascimento2009optimal}. The only modification that needs to be made is to redefine the Bellman operator `$H$' from \cite{nascimento2009optimal} so that it maps to the optimal value function slopes $v$ for any argument (we no longer interpret $H$ as a Bellman operator as our algorithm is not based on value iteration). 
\end{proof}

Consider some $m\geq 0$ and a state $(z^{\textnormal{rep}},w)\in \mS \setminus \mS_t^-$. Now, let state $(z_\textnormal{min}^{\textnormal{rep}},w)$ where $z_\textnormal{min}^{\textnormal{rep}}$ is the minimum replenish-up-to postdecision resource level such that $z_\textnormal{min}^{\textnormal{rep}} > z^{\textnormal{rep}}$ and $(z_\textnormal{min}^{\textnormal{rep}},w) \in \mS^-_t$. We note that such a state certainly exists because $(R_\textnormal{max} ,w) \in \mS_t^-$. The state $(z_\textnormal{min}^{\textnormal{rep}},w)$ satisfies the condition of Lemma~\ref{lemma. G} with $ \hat{K}_t^{m}(z_\textnormal{min}^{\textnormal{rep}},w) = K_t^m $, so we may conclude that there is an index $\hat{K}_t^{m}(z_\textnormal{min}^{\textnormal{rep}} - 1, w)$ associated with state $(z_\textnormal{min}^{\textnormal{rep}} - 1, w)$ such that for all $k \geq \hat{K}_t^{m}(z_\textnormal{min}^{\textnormal{rep}} - 1, w)$, the required condition $ \bar{v}_t^{\textnormal{rep},k-1}(z_\textnormal{min}^{\textnormal{rep}} - 1,w) \leq G_t^m(z_\textnormal{min}^{\textnormal{rep}} - 1,w)$ holds. This process can be repeated until we reach the state of interest $(z^{\textnormal{rep}},w)$, which provides the required $\hat{K}_t^{m}(z^{\textnormal{rep}},w)$. Finally, if we choose an iteration large enough, i.e., 
\begin{equation*}
	K_t^{m} =\textstyle \max\bigl\{ \tilde{K}_t^m, \max_{(z^{\textnormal{rep}},w) \,\in\, \mS \setminus \mS_t^-} \hat{K}_t^{m}(z^{\textnormal{rep}},w) \bigr\},
\end{equation*}
then \eqref{eq.v-and-G} is true for all $k\geq \hat{K}_t^{m}$ and states $(z^{\textnormal{rep}},w) \in \mS$. A symmetric proof can be given to verify that the other half of the inequality \eqref{eq.IvG}, $\bar{v}_t^{\textnormal{rep},k-1}(z^{\textnormal{rep}},w) \geq I_t^m(z^{\textnormal{rep}},w)$, holds for sufficiently large $k$, which completes the proof.

\subsection{Proof of Theorem~\ref{thm: v and z converge}}
\label{appendix: proof theorem}
The proof of Theorem~\ref{thm: v and z converge} is a backward induction over time periods $t$. For the replenish-up-to value function and threshold, the base case is $t=T$, where the convergence of $\bar{v}_T^{\textnormal{rep},k}(z^{\textnormal{rep}},w)$ and $\bar{l}_T^{\textnormal{rep},k}(w)$ to their optimal counterparts (both equal to zero) are trivial by assumption (see Section~\ref{sec:convanalysis}). The induction hypothesis is that $\overbar{l}_{\tau}^{\textnormal{rep},k}(w) \rightarrow l_\tau^{\textnormal{rep}}(w)$ almost surely for $\tau \geq t+1$, and $\bar{\pi}_{\tau}^{\textnormal{dis},k}(z^{\textnormal{rep}},w) \rightarrow \pi_\tau^{\textnormal{dis}}(z^{\textnormal{rep}},w)$ almost surely for $\tau \geq t$. Now, consider period $t$. The almost sure convergence of $\bar{v}_t^{\textnormal{rep},k}(z^{\textnormal{rep}},w)$ to $v_t^{\textnormal{rep}}(z^{\textnormal{rep}},w)$ follows by Lemma~\ref{lemma: convergence of vt}. Therefore, by Assumption~\ref{assumption:unique basestock}, we can conclude that
\begin{align*}
	\hat{l}_t^{\textnormal{rep},k} = \textstyle\argmax_{z^{\textnormal{rep}} \in \mZ(0)} \sum_{j=0}^{z^{\textnormal{rep}}} \bar{v}_t^{\textnormal{rep},k}(j,w_t^k) \rightarrow l_t^{\textnormal{rep}}(w) \quad \text{a.s.}
\end{align*}
Combining this with the update formula for $\bar{l}_t^{\textnormal{rep},k}(w)$, the stepsize properties of Assumption~\ref{assumption: stepsize}, and Theorem 2.4 of \citet{kushner2003stochastic}, we see that $\bar{l}_t^{\textnormal{rep},k}(w)$ converges to $l_t^{\textnormal{rep}}(w)$ almost surely.

For the dispense-down-to value function and policy, the proof is similar. We only need to notice that the dispense-down-to decision is made after the replenish-up-to decision, and the induction hypothesis for it is that $\overbar{l}_{\tau}^{\textnormal{rep},k}(w) \rightarrow l_\tau^{\textnormal{rep}}(w)$ and $\bar{\pi}_{\tau}^{\textnormal{dis},k}(z^{\textnormal{rep}},w) \rightarrow \pi_\tau^{\textnormal{dis}}(z^{\textnormal{rep}},w)$ almost surely for $\tau \geq t+1$.

\clearpage

\section{Actor-Critic Method} \label{sec:alg_ac}
The actor-critic method is shown in Algorithm~\ref{alg:ac}.

\IncMargin{1em}
\begin{algorithm}
    \small
	\SetKwInput{Input}{Input}\SetKwInput{Output}{Output}
	\Input{RBFs $\boldsymbol{\psi}(r,w)$ for the state value, and $\boldsymbol{\phi}(r,w; z^{\textnormal{rep}}, z^{\textnormal{dis}})$ for the policy.\\
		Initial parameter estimate $\boldsymbol{\eta}^0$ and $\boldsymbol{\theta}^0$. \\
		Stepsize rules $\tilde{\alpha}_t^k$ and $\tilde{\beta}_t^k$ for all $t,k$.}
	\medskip
	\Output{Parameters $\boldsymbol{\eta}^k$ and $\boldsymbol{\theta}^k$.}
	\BlankLine
	\For{$k=1,2,\ldots,K$}{
		\vspace{0.5em}
		Sample an initial state $s_0^k$.\\
		\vspace{0.5em}
		\For{$t=0,1,\ldots,T-1$}{
			\vspace{0.5em}
			Observe $w_t^k$ and $\xi_{t,1}^k$.\\
			\vspace{0.5em}
			Take action $ \bigl(z_t^{k,\textnormal{rep}}, z_t^{k,\textnormal{dis}}\bigr) \sim \pi_t^{k-1}(z^{\textnormal{rep}}, z^{\textnormal{dis}}| r_t^k,w_t^k; \boldsymbol{\theta}^{k-1}) $, observe the next state $(r_{t+1}^k,w_{t+1}^k)$ and the immediate reward $C_t = (c_{w_t^k} - h)\,r - c_{w_t^k} z_t^{k,\textnormal{rep}} + U_{w_t^k,0} \bigl(z_t^{k,\textnormal{rep}} - z_t^{k,\textnormal{dis}}, \xi_{t,0}^k\bigr)$. \\
			\vspace{0.5em}
			Calculate the temperal difference $ \delta_t \leftarrow C_t + \boldsymbol{\psi}(r_{t+1}^k,w_{t+1}^k)^T \boldsymbol{\eta}_{t+1}^k - \boldsymbol{\psi}(r_t^k,w_t^k)^T \boldsymbol{\eta}_t^k $. \\
			\vspace{0.5em}
			Critic update: $ \boldsymbol{\eta}_t^k = \boldsymbol{\eta}_t^{k-1} + \alpha_t^k(r,w) \delta_t \boldsymbol{\psi}(r_t^k,w_t^k) $, where $ \alpha_t^k(r,w) = \tilde{\alpha}_t^k \1\{(r,w) = (r_t^k,w_t^k)\} $.\\
			\vspace{0.5em}
			Actor update: $ \boldsymbol{\theta}_t^k = \boldsymbol{\theta}_t^{k-1} + \beta_t^k(r,w; z^{\textnormal{rep}}, z^{\textnormal{dis}}) \delta_t \Delta_{\boldsymbol{\theta}_t^{k-1}} \ln \dot{\pi}_t^{k-1}(z^{\textnormal{rep}}, z^{\textnormal{dis}}| r_t^k,w_t^k; \boldsymbol{\theta}^{k-1}) $, where $ \beta_t^k(r,w; z^{\textnormal{rep}}, z^{\textnormal{dis}}) = \tilde{\beta}_t^k \1\{(r,w; z^{\textnormal{rep}}, z^{\textnormal{dis}}) = (r_t^k,w_t^k; z_t^{k,\textnormal{rep}}, z_t^{k,\textnormal{dis}})\} $. \\
		}
	}
	\caption{Actor-Critic Method}
	\label{alg:ac}
\end{algorithm}\DecMargin{1em}

\clearpage

\section{A Practical, Aggregation-based Version of S-AC} \label{sec:aggregation}

To deal with potentially continuous information states $W_t \in \mathcal W$, we now introduce a practical version of our algorithm that utilizes aggregation in the information state. The essential idea is that the structural results from Section~\ref{sec:structural} continue to hold when we perform aggregation, so the S-AC idea can be applied almost directly. We partition the exogenous information space $\mW$ into $J$ sets, i.e., let 
\begin{equation*}
    \mW = \mW_1 \cup \mW_2 \cup \ldots \cup \mW_J \quad \text{with} \quad \mW_i \cap \mW_j = \emptyset \quad \text{if} \quad i \neq j.
\end{equation*}
Note that we do not aggregate in the inventory state and only do so in the information state. Each partition $\mathcal W_j$ contains a representative state, denoted $\dot{w}_j \in \mathcal W_j$, similar to what is done in \cite{tsitsiklis1996feature}. We also assign a distribution over each partition and we suppose that the distribution is described with a density function $p^j(w)$, with $w \in \mathcal W_j$. This allows us to map the original MDP to an aggregate version by integrating with respect to this distribution (which should be thought of as a design choice). For the remainder of the paper, we assume that $p^j(\cdot)$ is a uniform density function, but remark that the algorithm can easily accommodate other aggregation distributions by including a likelihood ratio factor.

We use ``dot'' notation to denote variables related to state aggregation. For example, $\dot{W}_t$ denotes the aggregate exogenous information at period $t$. Further, let $\dot{V}_t^{\textnormal{rep}}(r,\dot{w}_j)$ and $\dot{V}_t^{\textnormal{dis}}(z^{\textnormal{rep}},\dot{w}_j)$ respectively denote the \textit{optimal aggregate value functions} for the replenish-up-to decision and the dispense-down-to decision, let $\dot{\tilde{V}}_t^{\textnormal{rep}}(z,\dot{w}_j)$ and $\dot{\tilde{V}}_t^{\textnormal{dis}}(z^{\textnormal{rep}},\dot{w}_j)$ respectively denote their corresponding \textit{aggregate postdecision value function}, let $\dot{\tilde{\pi}}^{\textnormal{rep}}$ and $\dot{\tilde{\pi}}^{\textnormal{dis}}$ be the rounded policies under state aggregation. 
The terminal aggregate value function is $\dot{V}_T^{\textnormal{rep}}(r,\dot{w}_j) = -b\,r$ and for $t<T$, we have 
\begin{equation*}
	\dot{V}_t^{\textnormal{rep}}(r,\dot{w}_j) = \max_{z^{\textnormal{rep}} \in \bar{\mZ}(r)} \mathlarger{\int}_{w\in\mW_j} p^j(w)\, \bigl\{ (c_w-h) r - c_w z^{\textnormal{rep}} + \dot{V}_t^{\textnormal{dis}}(z^{\textnormal{rep}},\dot{w}_j) \bigr\} dw,
\end{equation*}
\begin{equation*}
	\dot{V}_t^{\textnormal{dis}}(z^{\textnormal{rep}},\dot{w}_j) = \max_{z^{\textnormal{dis}} \in \underline{\mZ}(z^\textnormal{rep})} \mathlarger{\int}_{w\in\mW_j} p^j(w)\, \bigl\{ \E_w\bigl[ U_{w,0}^{\bar{\mu}}(z^\textnormal{rep}-z^\textnormal{dis},\Xi_{t,0}) + \dot{V}_{t+1}^\textnormal{rep} \bigl(z^\textnormal{dis}, \dot{W}_{t+1}\bigr) \bigr] \bigr\} dw,
\end{equation*}
where the transition to $\dot{W}_{t+1}$ satisfies $\dot{W}_{t+1} = \sum_{j=1}^k \dot{W}_j \1\{W_{t+1}\in\dot{W}_j\}$, and $\bar{\mu}$ is the approximate policy for the lower-level. 
For the lower-level dispensing problem, similar the the discrete state space version, we solve the optimal policy $\mu^*$ for each aggregate state. Then the policy is extrapolated to the continuous state space by linear regression. 
Similar to the definition of postdecision value functions \eqref{eq.postdec_rep} and \eqref{eq.postdec_dis}, define 
\begin{equation*}
	\dot{\tilde{V}}_t^{\textnormal{rep}}(z,\dot{w}_j) = \int_{w\in\mW_j} p^j(w) \, \bigl\{ - c_w z^{\textnormal{rep}} + \dot{V}_t^{\textnormal{dis}}(z^{\textnormal{rep}},\dot{w}_j) \bigr\} dw,
\end{equation*}
\begin{equation*}
	\dot{\tilde{V}}_t^{\textnormal{dis}}(z^{\textnormal{rep}},\dot{w}_j) = \int_{w\in\mW_j} p^j(w) \,\E_w\bigl[\dot{V}_{t+1}^\textnormal{rep} \bigl(z^\textnormal{dis}, \dot{W}_{t+1}\bigr)\bigr] dw.
\end{equation*}
The optimal replenish-up-to and dispense-down-to policies under state aggregation can be written as 
\begin{equation*}
    \textstyle \dot{\pi}_t^{\textnormal{rep},*}(r,\dot{w}_j) \in \argmax_{z^{\textnormal{rep}} \in\bar{\mZ}(r)} \dot{\tilde{V}}_t^{\textnormal{rep}}(z,\dot{w}_j),
\end{equation*}
\begin{equation*}
	\textstyle \dot{\pi}_t^{\textnormal{dis},*}(z^{\textnormal{rep}},\dot{w}_j) \in \argmax_{z^\textnormal{dis}\in\underline{\mZ}(z^{\textnormal{rep}})} \; \dot{\tilde{V}}_t^\textnormal{dis}(z^\textnormal{dis},\dot{w}_j),
\end{equation*}
The postdecision Bellman equation under state aggregation is $\dot{\tilde{V}}_{T-1}^\textnormal{dis}(z^\textnormal{dis},\dot{w}_j) = -b\, z^\textnormal{dis}$, and for any $t<T-1$, 
\begin{equation*}
	\begin{aligned}
	\dot{\tilde{V}}_t^{\textnormal{rep}}& (z^{\textnormal{rep}},\dot{w}_j) \\
	&= \int_{w\in\mW_j} p^j(w) \, \bigl\{ - c_w z^{\textnormal{rep}} + \E_w \bigl[ U_{w,0}^{\bar{\mu}}\bigl(z^{\textnormal{rep}} - \dot{\pi}_t^{\textnormal{dis},*}(z^{\textnormal{rep}},\dot{w}_j), \Xi_{t,0}\bigr)\bigr] + \dot{\tilde{V}}_t^{\textnormal{dis}}\bigl(\dot{\pi}_t^{\textnormal{dis},*}(z^{\textnormal{rep}},\dot{w}_j), \dot{w}_j\bigr) \bigr\} dw,
	\end{aligned} 
\end{equation*}
\begin{equation*}
	\dot{\tilde{V}}_t^{\textnormal{dis}}(z^{\textnormal{dis}},\dot{w}_j) = \int_{w\in\mW_j} p^j(w) \, \bigl\{\E_w \bigl[ (c_{\dot{W}_{t+1}} - h) z^{\textnormal{dis}} + \dot{\tilde{V}}_{t+1}^{\textnormal{rep}}\bigl(\dot{\pi}_t^{\textnormal{rep},*}(z^{\textnormal{dis}},\dot{W}_{t+1}), \dot{W}_{t+1}\bigr) \bigr]\bigr\} dw.
\end{equation*}
The properties of the aggregate problem are stated in Proposition~\ref{prop: dot_V_tilde is concave in z}. The result follows from the proof of Proposition~\ref{prop: V_tilde_separate is concave in z} and the fact that $L^\natural$-concavity is preserved under expectations.

\begin{restatable}{proposition}{propositiondotVtildeIsConcave} 
    \label{prop: dot_V_tilde is concave in z}
	Suppose Assumption~\ref{assumption: properties of u} is satisfied. Then, the structural properties in Proposition~\ref{prop: V_tilde_separate is concave in z} hold for the aggregate postdecision value functions $\dot{\tilde{V}}_t^{\textnormal{rep}}(z^{\textnormal{rep}},\dot{w}_j)$ and $\dot{\tilde{V}}_t^{\textnormal{dis}}(z^{\textnormal{dis}},\dot{w}_j)$ as well as the thresholds $\dot{l}_t^{\textnormal{rep}}(\dot{w}_j)$ and $\dot{l}_t^{\textnormal{dis}}(\dot{w}_j)$.
\end{restatable}
Proposition~\ref{prop: dot_V_tilde is concave in z} is the theoretical basis of the algorithm for the aggregate problem. At each iteration and each period in the algorithm, we sample/observe the true exogenous information process as in Algorithm~\ref{alg:sac}, while using the corresponding aggregate exogenous information states to update values and thresholds. The details are in Appendix~\ref{sec:alg_aggr}.

\subsection{Algorithm for the Aggregate Problem} \label{sec:alg_aggr}

We define some other notations. At iteration $k$ and period $t$, we use the same notations as in Section~\ref{sec:alg} to represent the observation of the exogenous information and the attribute, which are $w_t^k$ and $\xi_{t,1}^k$ respectively. The corresponding information partition and the aggregate exogenous information are $\mW_t^k$ and $\dot{w}_t^k$ respectively. 
For the process $\mathbf{Z}^k_t(w) = \bigl\{ (\check{w}_\tau^k, \check{\xi}_{\tau,1}^k) : \tau = t,\ldots,T-1 \bigr\}$, denote $\check{\mW}_t^k$ and $\dot{\check{w}}_t^k$ the corresponding information partition and the aggregate exogenous information at period $\tau$ respectively, and we have $\check{w}_t^k \in \check{\mW}_t^k$.
Let $\dot{f}_t^{\textnormal{rep}} \bigl(\dot{\tilde{\pi}}^{\textnormal{rep},k-1}, \dot{\tilde{\pi}}^{\textnormal{dis},k-1}; \mathbf{Z}^k_t(w_t), r_t\bigr)$ be the Monte Carlo estimates of the replenish-up-to postdecision value starting in period $t$ under the current aggregate policy approximations and an initial state $(r_t, w_t)$:
\begin{equation*} \label{eq.aggr_ft}
	\begin{aligned}
	\dot{f}_t^{\textnormal{rep}} \bigl(\dot{\tilde{\pi}}^{\textnormal{rep},k-1}, \dot{\tilde{\pi}}^{\textnormal{dis},k-1}; \mathbf{Z}^k_t(w_t), r_t\bigr) = 
	& \textstyle \sum_{\tau=t}^{T-2} \bigl[ -c_{\check{w}_\tau^k} \dot{\tilde{z}}_\tau^\textnormal{rep} + U_{\check{w}_{\tau}^k}^{\bar{\mu}} \bigl( \dot{\tilde{z}}_\tau^\textnormal{rep} - \dot{\tilde{z}}_\tau^\textnormal{dis}, \check{\xi}_{\tau,0}^k \bigr) + (c_{\check{w}_{\tau+1}^k} - h) \dot{\tilde{z}}_\tau^\textnormal{dis} \bigr] \\
    &-c_{\check{w}_{T-1}^k} \dot{\tilde{z}}_{T-1}^\textnormal{rep} + U_{\check{w}_{T-1}^k,0}^{\bar{\mu}} \bigl( \dot{\tilde{z}}_{T-1}^\textnormal{rep} - \dot{\tilde{z}}_{T-1}^\textnormal{dis}, \check{\xi}_{T-1,0}^k\bigr) - b\,\dot{\tilde{z}}_{T-1}^\textnormal{dis},
	\end{aligned}
\end{equation*}
where for all $\tau\geq t$, the aggregate policies are $\dot{\tilde{z}}_\tau^\textnormal{rep} = \dot{\tilde{\pi}}_\tau^{\textnormal{rep},k-1}(r_\tau,\dot{\check{w}}_\tau^k)$, $\dot{\tilde{z}}_\tau^\textnormal{dis} = \dot{\tilde{\pi}}_\tau^{\textnormal{dis},k-1} \bigl( \tilde{\pi}_\tau^{\textnormal{rep},k-1}(r_\tau,\dot{\check{w}}_\tau^k), \dot{\check{w}}_\tau^k \bigr)$.
Let $\dot{f}_t^{\textnormal{dis}} \bigl(\dot{\tilde{\pi}}^{\textnormal{rep},k-1}, \dot{\tilde{\pi}}^{\textnormal{dis},k-1}; \mathbf{Z}^k_t(w_t), z_t^{\textnormal{rep}}\bigr)$ be the Monte Carlo estimates of the dispense-down-to postdecision value starting in period $t$ under the current aggregate policy approximations and an initial state $(z_t^{\textnormal{rep}},w_t)$:
\begin{equation*}
    \begin{aligned}
	\textstyle \dot{f}_t^{\textnormal{dis}} \bigl(\dot{\tilde{\pi}}^{\textnormal{rep},k-1}, \dot{\tilde{\pi}}^{\textnormal{dis},k-1}; &\mathbf{Z}^k_t(w_t), z_t^{\textnormal{rep}}\bigr)\\
	&=\textstyle \sum_{\tau=t}^{T-2} \bigl[ (c_{\check{w}_{\tau+1}^k} - h) \dot{\tilde{z}}_\tau^\textnormal{dis} - c_{\check{w}_{\tau+1}^k} \dot{\tilde{z}}_{\tau+1}^\textnormal{rep} + U_{\check{w}_{\tau+1}^k,0}^{\bar{\mu}} \bigl( \tilde{z}_{\tau+1}^\textnormal{rep} - \dot{\tilde{z}}_{\tau+1}^\textnormal{dis}, \check{\xi}_{\tau+1,0}^k \bigr) \bigr] - b\,\dot{\tilde{z}}_{T-1}^\textnormal{dis},
	\end{aligned}
\end{equation*}
where $\dot{\tilde{z}}_t^\textnormal{dis} = \dot{\tilde{\pi}}_t^{\textnormal{dis},k-1} \bigl( z_t^{\textnormal{rep}}, \dot{\check{w}}_\tau^k \bigr)$, and for all $\tau\geq t+1$, $\dot{\tilde{z}}_\tau^\textnormal{rep} = \dot{\tilde{\pi}}_\tau^{\textnormal{rep},k-1}(r_\tau,\dot{\check{w}}_\tau^k)$, 
$\dot{\tilde{z}}_\tau^\textnormal{dis} = \dot{\tilde{\pi}}_\tau^{\textnormal{dis},k-1} \bigl( \dot{\tilde{z}}_\tau^\textnormal{rep}, \dot{\check{w}}_\tau^k \bigr)$.

At each period $t$, to compute the approximate slopes, we use $\dot{f}_{t}^{\textnormal{dis}}$ to observe values $\dot{\hat{V}}_t^{\textnormal{rep},k} (z_t^{\textnormal{rep},k}, \dot{w}_t^k)$ and $\dot{\hat{V}}_t^{\textnormal{rep},k} (z_t^{\textnormal{rep},k}-1, \dot{w}_t^k)$, and $\dot{f}_{t+1}^{\textnormal{rep}}$ to observe values $\dot{\hat{V}}_t^{\textnormal{dis},k} (z_t^{\textnormal{dis},k}, \dot{w}_t^k)$ and $\dot{\hat{V}}_t^{\textnormal{dis},k} (z_t^{\textnormal{dis},k}-1, \dot{w}_t^k)$, where $\dot{f}_{t}^{\textnormal{dis}}$ and $\dot{f}_{t+1}^{\textnormal{rep}}$ are implied by the current aggregate policies $\dot{\bar{\pi}}^{\textnormal{rep},k-1}$ and $\dot{\bar{\pi}}^{\textnormal{dis},k-1}$; specifically, for $z^\textnormal{rep},z^\textnormal{dis} \ge 0$, the observations $\dot{\hat{V}}_t^{\textnormal{rep},k}(z^\textnormal{rep},\dot{w}_t^k)$ and $\dot{\hat{V}}_t^{\textnormal{dis},k}(z^\textnormal{dis},\dot{w}_t^k)$ are 
\begin{equation*}
	\begin{aligned}
	\dot{\hat{V}}_t^{\textnormal{rep},k}(z^\textnormal{rep},\dot{w}_t^k) =& -c_{\dot{w}_t^k} z^\textnormal{rep} + U_{\check{w}_{t}^k,0}^{\bar{\mu}} \bigl( z^\textnormal{rep} - \dot{\tilde{\pi}}_t^{\textnormal{dis},k-1}(z^\textnormal{rep},\dot{w}_t^k), \check{\xi}_{t,0}^k \bigr) \\&+ \dot{f}_{t}^{\textnormal{dis}} \bigl(\dot{\tilde{\pi}}^{\textnormal{rep},k-1}, \dot{\tilde{\pi}}^{\textnormal{dis},k-1}; \mathbf{Z}^k_{t}(\dot{w}_{t}), z^\textnormal{rep}\bigr),
	\end{aligned}
\end{equation*}
and
\begin{equation*}
	\begin{aligned}
	\hat{V}_t^{\textnormal{dis},k}(z^\textnormal{dis},\dot{w}_t^k) =& (c_{w_{t+1}} - h)z^\textnormal{dis} + \dot{f}_{t+1}^{\textnormal{rep}} \bigl(\dot{\tilde{\pi}}^{\textnormal{rep},k-1}, \dot{\tilde{\pi}}^{\textnormal{dis},k-1}; \mathbf{Z}^k_{t+1}(w_{t+1}), z^\textnormal{dis}\bigr),
	\end{aligned}
\end{equation*}
where $w_{t+1}$ is a realization from the distribution $W_{t+1}\, |\, W_t=\dot{w}_t^k$. 
The approximate slopes $\dot{\hat{v}}_t^{\textnormal{rep},k}$ and $\dot{\hat{v}}_t^{\textnormal{dis},k}$ are given by: 
\begin{equation} \label{eq.aggr_v_hat_rep}
	\dot{\hat{v}}_t^{\textnormal{rep},k} = \dot{\hat{V}}_t^{\textnormal{rep},k}(z_t^{\textnormal{rep},k}, \dot{w}_t^k) - \dot{\hat{V}}_t^{\textnormal{rep},k}(z_t^{\textnormal{rep},k}-1, \dot{w}_t^k),
\end{equation}
\begin{equation} \label{eq.aggr_v_hat_dis}
	\dot{\hat{v}}_t^{\textnormal{dis},k} = \dot{\hat{V}}_t^{\textnormal{dis},k}(z_t^{\textnormal{dis},k}, \dot{w}_t^k) - \dot{\hat{V}}_t^{\textnormal{dis},k}(z_t^{\textnormal{dis},k}-1, \dot{w}_t^k),
\end{equation}
where we define $\dot{\hat{V}}_t^{\textnormal{rep},k}(-1, \dot{w}_t^k) = \dot{\hat{V}}_t^{\textnormal{dis},k}(-1, \dot{w}_t^k) \equiv 0$. 
Under the assumption that $p^j(\cdot)$ is a uniform density function for all $j$, an algorithm for the aggregate problem is given in Algorithm~\ref{alg:aggr-sac}. 

\IncMargin{1em}
\begin{algorithm}[ht]
    \small
	\SetKwInput{Input}{Input}\SetKwInput{Output}{Output}
	\Input{Lower-level approximate policy $\bar{\mu}$ (learned from backward dynamic programming in the aggregate state space and extrapolated to continuous state space by linear regression).
	Initial policy estimates $\dot{\bar{l}}^{\textnormal{rep},0}$ and $\dot{\bar{\pi}}^{\textnormal{dis},0}$, and value estimates $\dot{\bar{v}}^{\textnormal{rep},0}$ and $\dot{\bar{v}}^{\textnormal{dis},0}$ (nonincreasing in $z^\textnormal{rep}$ and $z^\textnormal{dis}$ respectively).
	Stepsize rules $\tilde{\alpha}_t^k$ and $\tilde{\beta}_t^k$ for all $t,k$.}
	\medskip
	\Output{Approximations $\{\dot{\bar{l}}^{\textnormal{rep},k}\}$, $\{\dot{\bar{\pi}}^{\textnormal{dis},k}\}$, $\{\dot{\bar{v}}^{\textnormal{rep},k}\}$, and $\{\dot{\bar{v}}^{\textnormal{dis},k}\}$.}
	\BlankLine
	\For{$k=1,2,\ldots$}{
		\vspace{0.5em}
		Sample initial states $z_0^{\textnormal{rep},k}$ and $z_0^{\textnormal{dis},k}$. \label{alg:aggr-sac0}\\
		\vspace{0.5em}
		\For{$t=0,1,\ldots,T-1$}{\label{alg: Aggr-forins}
			\vspace{0.5em}
			Observe $w_t^k$ and $\xi_{t,1}^k$, then observe $\dot{\hat{v}}_t^{\textnormal{rep},k}$ and $\dot{\hat{v}}_t^{\textnormal{dis},k}$ according to \eqref{eq.aggr_v_hat_rep} and \eqref{eq.aggr_v_hat_dis} respectively. \label{alg:aggr-sac1}\\
			\vspace{0.5em}
			Perform SA step: \label{alg:aggr-sac2}
			\begin{equation*}
			    \begin{aligned}
			    \textstyle \dot{\tilde{v}}_t^{\textnormal{rep},k}(z^\textnormal{rep},\dot{w}) &= \bigl(1-\alpha_t^k(z^\textnormal{rep},\dot{w})\bigr)\, \dot{\bar{v}}_t^{\textnormal{rep},k-1}(z^\textnormal{rep},\dot{w}) + \alpha_t^k(z^\textnormal{rep},\dot{w})\, \dot{\hat{v}}_t^{\textnormal{rep},k}, \\
			    \textstyle \dot{\tilde{v}}_t^{\textnormal{dis},k}(z^\textnormal{dis},\dot{w}) &= \bigl(1-\alpha_t^k(z^\textnormal{dis},\dot{w})\bigr)\, \dot{\bar{v}}_t^{\textnormal{dis},k-1}(z^\textnormal{dis},\dot{w}) + \alpha_t^k(z^\textnormal{dis},\dot{w})\, \dot{\hat{v}}_t^{\textnormal{dis},k}.
			    \end{aligned}
			\end{equation*}
			\\
			\vspace{0.5em}
			Perform the concavity projection operation \eqref{eq.concavity-projection}: \label{alg:aggr-sac3}
			\begin{equation*}
			    \textstyle \dot{\bar{v}}_t^{\textnormal{rep},k} = \Pi_{z_t^{\textnormal{rep},k},\dot{w}_t^k} (\dot{\tilde{v}}_t^{\textnormal{rep},k}), \;\;\;\; \dot{\bar{v}}_t^{\textnormal{dis},k} = \Pi_{z_t^{\textnormal{dis},k},\dot{w}_t^k} (\dot{\tilde{v}}_t^{\textnormal{dis},k}).
			\end{equation*}
			\\
			\vspace{0.5em}
			Observe and update the replenish-up-to threshold: \label{alg:aggr-sac4}
			\begin{equation*}
			    \dot{\hat{l}}_t^{\textnormal{rep},k} =\textstyle \argmax_{z^\textnormal{rep} \in \bar{\mZ}(0)} \sum_{j=0}^{z^\textnormal{rep}} \dot{\bar{v}}_t^{\textnormal{rep},k}\bigl(j,\dot{w}_t^k\bigr), 
			\end{equation*}
			\begin{equation*}
			    \dot{\bar{l}}_t^{\textnormal{rep},k}(\dot{w}) = \bigl(1-\beta_t^k(\dot{w})\bigr)\,\dot{\bar{l}}_t^{\textnormal{rep},k-1}(\dot{w}) + \beta_t^k(\dot{w})\,\dot{\hat{l}}_t^{\textnormal{rep},k}.
			\end{equation*}
			\\
			\vspace{0.5em}
			Observe and update the dispense-down-to policy: \label{alg:sac5}
			\\
			\vspace{0.5em}
			\For{$z_t^{\textnormal{rep}} = 0,1,\ldots, R_{\textnormal{max}}$ \label{alg:agg-sac6}}{
			\begin{equation*}
			    \dot{\hat{\pi}}_t^{\textnormal{dis}} =\textstyle \argmax_{z^\textnormal{dis} \in \underline{\mZ}(z_t^{\textnormal{rep}})} U_{w_{t}^k,0}^{\bar{\mu}} \bigl(z_t^{\textnormal{rep}} - z^\textnormal{dis}, \xi_{t,0}^k\bigr) + \sum_{j=0}^{z^\textnormal{dis}} \dot{\bar{v}}_t^{\textnormal{dis},k}\bigl(j,\dot{w}_t^k\bigr),
			\end{equation*}
			\begin{equation*}
			    \dot{\bar{\pi}}_t^{\textnormal{dis},k} (z^{\textnormal{rep}},\dot{w}) = \bigl(1-\alpha^k(z^{\textnormal{rep}},\dot{w})\bigr)\,\dot{\bar{\pi}}_t^{\textnormal{dis},k-1}(z^{\textnormal{rep}},\dot{w}) + \alpha^k(z^{\textnormal{rep}},\dot{w})\,\dot{\hat{\pi}}_t^{\textnormal{dis}}.
			\end{equation*}
			}
			\vspace{0.5em}
			If $t<T-1$, take $z_{t+1}^{\textnormal{rep},k}$ and $z_{t+1}^{\textnormal{dis},k}$ according to the $\epsilon$-greedy exploration policy. \label{alg:aggr-sac6}
		}
	}
	\caption{Aggregate Structured Actor-Critic Method}
	\label{alg:aggr-sac}
\end{algorithm}\DecMargin{1em}

\clearpage

\end{appendices}

\clearpage

\bibliographystyle{abbrvnat}
{\small 
	\bibliography{bibliography}}
\end{document}